\newcounter{mynotes}
\declaretheorem[within=section]{theorem}
\declaretheorem[sibling=theorem]{corollary}
\declaretheorem[sibling=theorem]{lemma}
\declaretheorem[sibling=theorem]{claim}
\declaretheorem[sibling=theorem]{definition}
\declaretheorem[sibling=theorem]{Lemma+Definition}
\declaretheorem[sibling=theorem]{remark}
\newenvironment{proofof}[1]{\begin{trivlist} \item {\bf Proof
#1:~~}}
  {\qed\end{trivlist}}
\crefname{proposition}{Proposition}{Propositions}
\crefname{conjecture}{Conjecture}{Conjectures}
\crefname{claim}{Claim}{Claims}
\crefname{remark}{Remark}{Remarks}
\newcounter{termcounter}
\renewcommand{\thetermcounter}{\Alph{termcounter}}
\crefname{term}{term}{terms}
\def\term{\@ifnextchar[\term@optarg\term@noarg}
\def\term@optarg[#1]#2{%
  \textup{(#1)}%
  \def\@currentlabel{#1}%
  \def\cref@currentlabel{[][2147483647][]#1}%
  \cref@label[term]{#2}}
\def\term@noarg#1{%
  \refstepcounter{termcounter}%
  \textup{(\thetermcounter)}%
  \cref@label[term]{#1}}
\newcommand{\mrm}[1]{\mathrm {#1}}
\newcommand{\msf}[1]{\mathsf {#1}}
\newcommand{\ignore}[1]{}
\newcommand{\poly}{\mathrm{poly}}
\newcommand{\set}[1]{\left\{#1\right\}}
\newcommand{\abs}[1]{\lvert#1\rvert}
\newcommand{\norm}[1]{\lVert#1\rVert}
\definecolor{DSred}{rgb}{1,0,0}
\renewcommand{\leq}{\leqslant}
\renewcommand{\geq}{\geqslant}
\renewcommand{\ge}{\geqslant}
\renewcommand{\le}{\leqslant}
\renewcommand{\epsilon}{\varepsilon}
\newcommand{\eps}{\epsilon}
\newcommand{\R}{\mathbb{R}}
\newcommand{\C}{\mathbb{C}}
\newcommand{\Z}{\mathbb{Z}}
\newcommand{\N}{\mathbb{N}}
\newcommand{\F}{\mathbb{F}}
\newcommand{\T}{\mathbb{T}}
\newcommand{\U}{\mathbb{U}}
\newcommand{\cB}{\mathcal B}
\newcommand{\cF}{\mathcal F}
\newcommand{\cK}{\mathcal K}
\newcommand{\cP}{\mathcal P}
\newcommand{\Esymb}{{\bf E}}
\newcommand{\Psymb}{{\bf Pr}}
\DeclareMathOperator*{\E}{\Esymb}
\DeclareMathOperator*{\ProbOp}{\Psymb}
\renewcommand{\Pr}{\ProbOp}
\newcommand{\Ex}[1]{\E\Brac{#1}}
\newcommand{\expo}[1]{{\mathsf{e}\left(#1\right)}}
\newcommand{\Poly}{\ComplexityFont{P}}
\newtheorem{openproblem}{Open Problem}
\crefname{openproblem}{Open Problem}{Open Problems}
\renewcommand{\[}{\begin{equation}}
\renewcommand{\]}{\end{equation}}
\newcommand{\restate}[2]{\medskip
\noindent{\bf #1 (restated).}{\sl #2}}
\def\poly{\mathrm{Poly}}
\definecolor{Blue}{rgb}{0,0,1}
\definecolor{Red}{rgb}{1,0,0}
\def\Z{{\mathbb{Z}}}
\def\rank{{\mathrm{rank}}}
\def\Poly{\mathrm{Poly}}
\def\tf{\widetilde{f}}
\def\rank{\text{rank}}
\def\D{\mathbb{D}}
\def\Ex{\E}
\def\bias{\mathrm{bias}}
\def\crank{\text{crank}}
\def\nrank{\text{{strong-rank}}}
\def\depth{\mathrm{depth}}
\newcommand{\ip}[1]{{\langle #1 \rangle}}
\def\cK{\mathcal{K}}
\def\iH{H^{(i)}}
\def\iG{G^{(i)}}
\title{On the Structure of Quintic Polynomials}
\author{Pooya Hatami\thanks{This material is based upon work supported by the National Science Foundation under agreement No. CCF-1412958. Any opinions, findings and conclusions or recommendations expressed in this material are those of the author and do not necessarily reflect the views of the National Science Foundation.}\\ Institute for Advanced Study, Princeton, NJ\\ \textit{pooyahat@math.ias.edu}}
\begin{document}

\date{}

\maketitle

\begin{abstract}
We study the structure of bounded degree polynomials over finite fields. Haramaty and Shpilka [STOC 2010] showed that biased degree three or four polynomials admit a strong structural property. We confirm that this is the case for degree five polynomials also. Let $\F=\F_q$ be a prime field.
\begin{itemize}
\item[1.]\label{item1} Suppose $f:\F^n\rightarrow \F$ is a degree five polynomial with $\bias(f)=\delta$. Then $f$ can be written in the form $f= \sum_{i=1}^{c} G_i H_i + Q$, where $G_i$ and $H_i$s are nonconstant polynomials satisfying $\deg(G_i)+\deg(H_i)\leq 5$ and $Q$ is a degree $\leq 4$ polynomial. Moreover, $c=c(\delta)$ does not depend on $n$ and $q$.
\item[2.] Suppose $f:\F^n\rightarrow \F$ is a degree five polynomial with $\bias(f)=\delta$. Then there exists an $\Omega_\delta(n)$ dimensional subspace $V\subseteq \F^n$ such that $f\vert_V$ is a constant. 
\end{itemize}
Cohen and Tal [Random 2015] proved that biased polynomials of degree at most four are constant on a subspace of dimension $\Omega(n)$. Item [2.] extends this to degree five polynomials. A corollary to Item [2.] is that any degree five affine disperser for dimension $k$ is also an affine extractor for dimension $O(k)$. We note that Item [2.] cannot hold for degrees six or higher.

We obtain our results for degree five polynomials as a special case of structure theorems that we prove for biased degree $d$ polynomials when $d<|\F|+4$. While the $d<|\F|+4$ assumption seems very restrictive, we note that prior to our work such structure theorems were only known for $d<|\F|$ by Green and Tao [Contrib. Discrete Math. 2009] and Bhowmick and Lovett [arXiv:1506.02047]. Using algorithmic regularity lemmas for polynomials developed by Bhattacharyya, et. al. [SODA 2015], we show that whenever such a strong structure exists, it can be found algorithmically in time polynomial in $n$. 
\end{abstract}

\newpage

\section{Introduction}
Let $\F$ be a finite field. The bias of a function $f:\F^n\rightarrow \F$ is defined as 
$$
\bias(f):=\left|\Ex_{x\in \F^n} \left[ \omega^{f(x)} \right]\right|,
$$
where $\omega= e^{2\pi i / |\F|}$, is a complex primitive root of unity of order $|\F|$. The smaller the bias of a function, the more uniformly $f$ is distributed over $\F$, thus a random function has negligible bias. This remains true, if $f$ is a random degree $d$ polynomial for a fixed degree $d>0$. Thus bias can be thought of as a notion of pseudorandomness for polynomials, and as often lack of pseudorandomness implies structure, one may ask whether every biased degree $d$ polynomial admits strong structural properties. Green and Tao~\cite{MR2592422} (in the case when $d<|\F|$) and later Kaufman and Lovett~\cite{KL08} (in the general case)  proved this heuristic to be true by showing that every biased degree $d$ polynomial is determined by a few lower degree polynomials. Formally, these results state that for a degree $d$ polynomial $f$, there is a constant $c\leq c(d,\bias(f), |\F|)$, degree $\leq d-1$ polynomials $Q_1,\ldots, Q_c$ and a function $\Gamma:\F^c\rightarrow \F$, such that
\begin{equation}\label{eq:biasstructure}
f=\Gamma(Q_1,\ldots, Q_c).
\end{equation}
Note that crucially $c$ does not depend on the dimension $n$, meaning that for large $n$, it is very unlikely for a typical polynomial to be biased. Recently, Bhowmick and Lovett~\cite{BL15largefield} proved that the dependence of the number of terms in \cref{eq:biasstructure} on $|\F|$ can be removed, in other words biased polynomials are very rare even when the field size is allowed to grow with $n$. 
These structure theorems for biased polynomials have had several important applications. For example they were used by Kaufman and Lovett~\cite{KL08} to give interesting worst case to average case reductions, and by Tao and Ziegler~\cite{MR2948765} in their proof of the inverse theorem for Gowers norms over finite fields. Such structure theorems have played an important role in determining the weight distribution and list decoding radius of Reed-Muller codes~\cite{KLP12, BL15reedmuller, BL15largefield}. They were also used by Cohen and Tal~\cite{CT15} to show that any degree $d$ affine disperser over a prime field is also an affine extractor with related parameters.

There are however two drawbacks to the structure theorems proved in \cite{MR2592422, KL08}. Firstly, the constant $c=c(\delta,d,|\F|)$ has very bad dependence on $\delta$ which is due to the use of regularity lemmas for polynomials. Secondly, there is no restrictions on the function $\Gamma$ obtained in  \cref{eq:biasstructure}, in particular there is nothing stopping it from being of degree $c$. In the special case of quadratic polynomials better bounds and structural properties follow from the following well-known theorem.
\begin{theorem}[Structure of quadratic polynomials~\cite{lidl}]
For every quadratic polynomial $f:\F^n\rightarrow \F$ over a prime field $\F$, there exists an invertible linear map $T$, a linear polynomial $\ell$, and field elements $\alpha_1,\ldots, \alpha_n$ such that
\begin{itemize}
\item If $|\F|=2$, then $(f \circ T)(x)= \sum_{i=1}^{\lfloor n/2\rfloor} \alpha_i x_{2i-1}x_{2i} + \ell(x)$.
\item If $|\F|$ is odd, then $(f \circ T)(x)= \sum_{i=1}^n \alpha_i x_i^2+ \ell(x).$
\end{itemize}
\end{theorem}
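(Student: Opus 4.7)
The plan is to reduce to the classification of symmetric (respectively alternating) bilinear forms over $\F$. Write $f = f_2 + \ell_0$, where $f_2$ is the homogeneous degree-two part and $\ell_0$ collects the linear and constant terms. For any invertible linear map $T$, $f \circ T = f_2 \circ T + \ell_0 \circ T$, and $\ell_0 \circ T$ is again affine in $x$, so it can be absorbed into the final $\ell$. Thus it suffices to produce $T$ so that $f_2 \circ T$ has the prescribed shape, and I handle the two characteristic cases separately.

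Suppose first that $|\F|$ is odd, so that $2 \in \F^{\times}$. I would associate to $f_2$ the unique symmetric bilinear form $B$ satisfying $f_2(x) = B(x,x)$ (this uses $2 \neq 0$, via the polarization $B(u,v) = \tfrac12\bigl(f_2(u+v) - f_2(u) - f_2(v)\bigr)$), and diagonalize $B$ by induction on $n$. The induction step rests on the observation that whenever $B \neq 0$ some vector $v_1$ satisfies $B(v_1, v_1) \neq 0$; indeed, if every such ``diagonal'' value vanished the polarization identity above would force $B(u,v) = 0$ for all $u, v$. Given such a $v_1$, split $\F^n = \langle v_1 \rangle \oplus v_1^{\perp}$, where $v_1^{\perp} = \{w : B(v_1, w) = 0\}$ has dimension $n-1$ because $B(v_1, \cdot)$ is a nonzero linear functional, and recurse on $B$ restricted to $v_1^{\perp}$. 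The basis change collecting all chosen $v_i$ yields $T$ with $f_2 \circ T = \sum_{i=1}^n \alpha_i x_i^2$, as desired.

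Now suppose $|\F| = 2$. Here the identity $x_i^2 = x_i$ shows that any diagonal monomials appearing in $f$ are actually linear and can be moved into $\ell_0$. What remains of $f_2$ is a sum of cross terms $x_i x_j$ with $i < j$, which corresponds to a symmetric bilinear form $B$ whose matrix has zero diagonal, i.e., an alternating form over $\F_2$. The plan is to apply the standard symplectic normal form: if $B \neq 0$, pick $v, w$ with $B(v,w) = 1$, split off the hyperbolic plane $\langle v, w \rangle$, and recurse on its $B$-orthogonal complement, which has dimension $n-2$ because $B(v,\cdot)$ and $B(w,\cdot)$ are linearly independent functionals. This produces a basis in which $f_2 \circ T = \sum_{i=1}^k x_{2i-1} x_{2i}$ for some $k \leq \lfloor n/2 \rfloor$, with the remaining $\alpha_i$ in the statement equal to $0$. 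The only mild subtlety is the characteristic-$2$ case, where completing the square is unavailable; fortunately the identity $x^2 = x$ conveniently eliminates the diagonal terms and hands us a genuine alternating form, so no real obstacle arises.
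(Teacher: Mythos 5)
The paper does not give a proof of this theorem; it cites Lidl--Niederreiter and moves on. Your argument is the standard classification proof that reference carries out: polarize to a symmetric bilinear form and diagonalize when the characteristic is odd, and reduce to the symplectic normal form of an alternating form over $\F_2$ (after using $x^2 = x$ to sweep diagonal terms into the linear part) when the characteristic is two. Both cases are handled correctly, including the two small points that need checking --- that vanishing of all diagonal values forces $B \equiv 0$ in odd characteristic, and that over $\F_2$ the matrix $A + A^{\mathsf T}$ of the alternating form transforms by congruence so that bringing it to symplectic shape produces exactly $\sum_i y_{2i-1}y_{2i}$ plus affine terms --- so this is a correct proof by essentially the same route the paper implicitly relies on.
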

It easily follows that every quadratic polynomial $f$, can be written in the form $\sum_{i=1}^{2\log(1/\bias(f))} \ell_i \ell'_i+ \ell''$ where $\ell_i, \ell'_i$s and $\ell''$ are linear polynomials. This is a very strong structural property, moreover the dependence of the number of the terms on $\bias(f)$ is optimal. Haramaty and Shpilka~\cite{MR2743281} studied the structure of biased cubic and quartic polynomials and proved the following two theorems.
\begin{theorem}[Biased cubic polynomials~\cite{MR2743281}]\label{HS:cubic}
Let $f:\F^n \rightarrow \F$ be a cubic polynomial such that $\bias(f)=\delta>0$. Then there exist $c_1=O\left(\log(1/\delta)\right)$, $c_2=O\left(\log^4(1/\delta)\right)$, quadratic polynomials $Q_1,...,Q_{c_1}:\F^n\rightarrow \F$, linear functions $\ell_1,...,\ell_{c_1}, \ell'_1,...,\ell'_{c_2}:\F^n\rightarrow \F$ and a cubic polynomial $\Gamma:\F^{c_2}\rightarrow \F$ such that
$$
f= \sum_{i=1}^{c_1} \ell_i Q_i + \Gamma\left(\ell'_1,\ldots, \ell'_{c_2}\right).
$$ 
\end{theorem}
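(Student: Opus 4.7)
The plan is to combine the bias-to-derivative inequality with the structure theorem for biased quadratic polynomials. Since $f$ is cubic, the derivative $\partial_y f(x):=f(x+y)-f(x)$ is a quadratic polynomial in $x$ whose purely quadratic part equals $3\,T(y,\cdot,\cdot)$, where $T$ is the symmetric trilinear form associated with the cubic homogeneous part of $f$. A Cauchy--Schwarz computation gives
\[
\bias(f)^2 \;\leq\; \E_y\, \bias(\partial_y f),
\]
so the set $S := \{\,y\in\F^n : \bias(\partial_y f) \geq \delta^2/2\,\}$ has density at least $\delta^2/2$. For every $y\in S$ the biased quadratic $\partial_y f$ admits, by the quoted structure theorem for quadratics, a representation as a sum of $r_y = O(\log(1/\delta))$ products of linear forms plus a linear correction; equivalently, $T(y,\cdot,\cdot)$ has bilinear rank $O(\log(1/\delta))$ for a $\delta^{O(1)}$-fraction of $y\in\F^n$.

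The first step would extract the $\sum\ell_i Q_i$ part. Using a Bogolyubov--Ruzsa style argument inside $S$ to consolidate the $y$-dependent linear forms, one produces $c_1 = O(\log(1/\delta))$ directions $y_1,\ldots,y_{c_1}$ and corresponding linear forms $\ell_i$ and quadratics $Q_i$ such that, writing $R:=f-\sum_{i=1}^{c_1}\ell_i Q_i$, the symmetric trilinear form of $R$'s cubic part lies in an $O(\log(1/\delta))$-dimensional space of trilinear forms. Equivalently, the cubic part of $R$ is a cubic polynomial in only $O(\log(1/\delta))$ linear functionals.

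The second step controls $R$ entirely. Because the cubic part of $R$ depends on few linear forms, the derivatives $\partial_y R$ are again biased quadratics on a rich subset of directions; iterating the quadratic structure theorem on a carefully chosen family of such derivatives and tracking how linear forms propagate across the three layers (cubic coefficient, quadratic coefficient, and linear coefficient of $R$) yields a common list $\ell'_1,\ldots,\ell'_{c_2}$ of $c_2 = O(\log^4(1/\delta))$ linear functionals such that every coefficient of $R$ depends only on them. Defining $\Gamma$ to be the resulting cubic polynomial in $c_2$ variables completes the decomposition.

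The principal obstacle is the second step. While the quadratic structure theorem gives inexpensive control over the trilinear part of $R$, ensuring that the lower-degree coefficients of $R$ also live in a common polylogarithmic-dimensional subspace---especially after the potentially rank-increasing subtraction of $\sum \ell_i Q_i$---requires a delicate rank-increment and pigeonhole argument whose bookkeeping yields the polylog exponent $4$. By contrast, $c_1$ is controlled almost directly by the quadratic structure theorem applied to a positive-density set of derivatives.
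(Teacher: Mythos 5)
This theorem is quoted by the paper from Haramaty and Shpilka \cite{MR2743281} and is not proved here, so there is no in-paper argument to compare against; the appropriate question is whether your sketch could stand on its own, and at present it cannot.

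Your opening moves are sound: the inequality $\bias(f)^2 \leq \E_y\,\bias(D_y f)$ and the conclusion that a $\delta^{O(1)}$-density set $S$ of directions $y$ makes $D_y f$ a quadratic of bilinear rank $O(\log(1/\delta))$ are correct and are indeed the launching point of the Haramaty--Shpilka argument. The difficulty lies precisely where you flag it, but the gap is worse than ``delicate bookkeeping.'' The passage from ``a $\delta^{O(1)}$-fraction of the $y$'s give low-rank derivatives'' to ``after subtracting $c_1$ terms $\ell_i Q_i$, the entire cubic part lies in a low-dimensional space of linear forms'' is the actual content of the theorem, and a ``Bogolyubov--Ruzsa style argument inside $S$ to consolidate the $y$-dependent linear forms'' does not obviously produce it: the linear forms that realize the low rank of $D_y f$ change with $y$, and additive closure of $S$ gives nothing like closure of those representations. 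What \cite{MR2743281} actually brings to bear is a structural theorem about linear (and affine) families of low-rank bilinear forms over $\F$ (in the spirit of classical results of Flanders/Atkinson--Lloyd and their finite-field extensions): if a constant-density set of the matrices $M_y$ in a linear family have rank $\leq r$, then one can extract a bounded number of fixed rows and columns whose removal makes the whole family low rank. That is the engine driving both the $\sum \ell_i Q_i$ extraction and the residual control, and it is not implied by iterating the quadratic structure theorem over a changing set of derivatives. Your sketch never supplies a substitute for it.

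A second, independent issue: you identify the quadratic part of $D_y f$ with $3\,T(y,\cdot,\cdot)$ for a symmetric trilinear form $T$. This identification requires dividing by $3$ (indeed by $3!$), which fails over $\F_2$ and $\F_3$. Since those small characteristics are the entire reason this theorem is nontrivial (for $|\F| > 3$ the polarization identity is available and the Green--Tao high-characteristic machinery applies), any proof built on a symmetric trilinear tensor $T$ needs to be reformulated from the start. Haramaty and Shpilka work directly with the bilinear forms $B_y(x,x') := D_{y}D_{x'}f(x)$ (the polarization of the derivative), which avoids the division; if you restate your Step~1 in that language it survives, but Step~2 still needs the low-rank-matrix-space structure theorem as described above before it is a proof rather than a plan.
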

\begin{theorem}[Biased quartic polynomials~\cite{MR2743281}]\label{HS:quartic}
Let $f:\F^n \rightarrow \F$ be a cubic polynomial such that $\bias(f)=\delta$. There exist $c=\poly(|\F|/\delta)$ and polynomials $\{\ell_i, Q_i, Q'_i, G_i\}_{i\in [c]}$, where the $\ell_i$s are linear, $Q_i, Q'_i$s are quadratic, and $G_i$'s are cubic polynomials, such that
$$f=\sum_{i=1}^c \ell_i G_i + \sum_{i=1}^c Q_iQ'_i.$$
\end{theorem}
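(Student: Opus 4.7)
The plan is to reduce the quartic case to the cubic case by taking a derivative, applying \Cref{HS:cubic}, and then assembling the resulting local structural information into a global decomposition of $f$. Write $\Delta_y f(x) := f(x+y)-f(x)$, which is a cubic polynomial for every $y$. The standard Cauchy--Schwarz step gives
\[
\bias(f)^2 \;=\; \Bigl|\E_x\, \omega^{f(x)}\Bigr|^2 \;=\; \E_y\E_x\, \omega^{\Delta_y f(x)} \;\le\; \E_y\, \bias(\Delta_y f),
\]
so a Markov argument yields a set of $y$'s of density $\Omega(\delta^2)$ on which $\bias(\Delta_y f)\ge \Omega(\delta^2)$. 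For each such $y$, I would apply \Cref{HS:cubic} to write
\[
\Delta_y f \;=\; \sum_{i=1}^{c_1} \ell_i^{(y)} Q_i^{(y)} \;+\; \Gamma^{(y)}\!\bigl(\ell_1'^{(y)},\ldots,\ell_{c_2}'^{(y)}\bigr),
\]
with $c_1=O(\log(1/\delta))$ and $c_2=O(\log^4(1/\delta))$ independent of $y$.

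The next step is to pigeonhole on the ``shape'' of these cubic decompositions (number of summands, the function $\Gamma^{(y)}$, and the rank pattern of the linear forms) so that a single shape is realised on a positive-density set $S$ of $y$'s. The main coherence comes from the commutativity $\Delta_{y_1}\Delta_{y_2}f = \Delta_{y_2}\Delta_{y_1}f$ of mixed derivatives: when combined with the cubic decompositions above, this forces the families $\{\ell_i^{(y)}\}_{y\in S}$ and $\{\ell_j'^{(y)}\}_{y\in S}$ of linear forms to be consistent across $y$. From this I would show that their joint span $W$ has dimension $\poly(|\F|/\delta)$, and pick a basis $L_1,\ldots,L_c$ of $W$.

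Once $W$ is in hand, one can subtract a combination $\sum_{i\le c} L_i\, G_i$ with appropriately chosen cubic $G_i$, leaving a quartic $f' = f - \sum L_i G_i$ whose cubic derivatives in directions $y\in S$ no longer involve any form outside $W$. A further averaging argument applied to $f'$ reduces the problem to bounding the rank of the top-degree part of $f'$ viewed as a symmetric quadrilinear form on the quotient space, and one concludes by linear algebra (as in the quadratic case) that this top-degree part splits as a bounded sum $\sum_j Q_j Q_j'$ of products of pairs of quadratics. Combining the two pieces yields the desired decomposition.

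The main obstacle is the coherence/integration step: turning pointwise-in-$y$ cubic decompositions into a single, $y$-independent decomposition of $f$. Both showing that the linear cores live in a common low-dimensional subspace $W$, and certifying that the residual quartic really splits as a bounded sum of products of pairs of quadratics, appear to require a nontrivial cocycle-type argument exploiting the commutativity of mixed derivatives, together with the rank-reduction machinery underlying the polynomial regularity lemma.
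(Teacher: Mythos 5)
The theorem you are trying to prove is cited from Haramaty--Shpilka and not reproved in this paper, so the natural comparison is with the paper's proof of the analogous \cref{generald} (which, specialized to $d=4$, would recover a version of this statement). Your first step (the Cauchy--Schwarz/Markov argument showing that a positive-density set of directions $y$ has $\bias(\Delta_y f)\ge\Omega(\delta^2)$) coincides with \cref{lem:biasedderivatives} in the paper. After that, the two arguments diverge, and I believe the route you sketch has real gaps.

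The central problem is the coherence step, and I do not think commutativity of mixed derivatives can carry the weight you ask of it. The cubic decomposition $\Delta_y f = \sum_i \ell_i^{(y)}Q_i^{(y)} + \Gamma^{(y)}(\ell_1'^{(y)},\dots)$ supplied by \cref{HS:cubic} is highly non-unique: even after pigeonholing on $c_1,c_2$ and the finite data $\Gamma^{(y)}$, the actual linear and quadratic ingredients are free to vary arbitrarily with $y$, and nothing in the statement gives a canonical representative to compare across different $y$. Commutativity $\Delta_{y_1}\Delta_{y_2}f=\Delta_{y_2}\Delta_{y_1}f$ constrains the second derivatives as abstract quadratics, but when you differentiate a product you get $\Delta_{y'}(\ell Q)=\ell(\cdot+y')\Delta_{y'}Q+(\Delta_{y'}\ell)Q$ plus cross terms, all built from non-canonical ingredients; matching the two sides does not obviously confine the forms $\{\ell_i^{(y)}\}_y$ to a low-dimensional space $W$. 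Likewise, the final step that the residual quartic ``splits as a bounded sum $\sum_j Q_jQ_j'$ by linear algebra'' is exactly the content of the theorem and does not follow from linear algebra unless one has first reduced to a bounded number of variables, which your argument does not do.

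What the paper's argument (and Haramaty--Shpilka's) uses instead at precisely this juncture is a combination of tools your sketch does not invoke. First, Bogolyubov--Chang (\cref{chang}) upgrades the density-$\Omega(\delta^2)$ set $A$ of good directions to a bounded-codimension \emph{subspace} $V\subseteq kA-kA$, and the subadditivity \cref{claim:subadditive} then gives $\crank(D_yf)=O(1)$ for \emph{every} $y\in V$. Second, the rank--bias theorem \cref{rankreg} is used in its stronger form: not merely that $f|_W$ has bounded rank, but that $f|_W = \Gamma(D_{y_1}f|_W,\dots,D_{y_{r_0}}f|_W)$ for $r_0=O(1)$ explicit directions, so the whole function is a fixed composition of a bounded number of cubics. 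Third, one regularizes the degree-$\le d-2$ polynomials defining these derivatives (\cref{generaldfactorreg}) and invokes the degree preservation lemma (\cref{lem:degreepreserve}) to conclude that $\Gamma$ acts on them as a degree-$\le d$ polynomial; expanding that polynomial is what produces the $\sum G_iH_i + Q$ form, hence (for $d=4$) the $\sum \ell_iG_i + \sum Q_iQ_i'$ form. These three ingredients --- Bogolyubov--Chang, the self-derivative form of the rank--bias theorem, and degree preservation under regularity --- are precisely what replaces your pigeonhole/cocycle heuristic, and without them the coherence and splitting steps are unsupported.
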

In the high characteristic regime when $d=\deg(f)<|\F|$, Green and Tao~\cite{MR2592422} showed that such a strong structure theorem holds, with a dependence that is really large in terms of bias. More precisely, if $d<|\F|$, then every degree $d$ polynomial $f$, with $\bias(f)\geq \delta$ can be written in the form 
$
f= \sum_{i=1}^{c(\delta, \F,d)} G_i H_i + Q,
$
where $G_i$ and $H_i$s are nonconstant polynomials satisfying $\deg(G_i)+\deg(H_i)\leq d$, and $Q$ is a degree $\leq d-1$ polynomial. Recently, Bhowmick and Lovett~\cite{BL15largefield} have proved that one can remove the dependence of $c$ on $|\F|$.

\section*{Our results}
Suppose that $\F=\F_q$ is a prime field. When the characteristic of $\F$ can be small, it was not known whether a degree five biased polynomial admits a strong structure in the sense of  \cref{HS:cubic,HS:quartic}. Moreover, the techniques from \cite{MR2743281} seem to break down.  

\paragraph{Quintic polynomials.} We combine ideas from \cite{MR2743281} with arguments from polynomial regularity and prove such a structure theorem for quintic polynomials.
\begin{theorem}[Biased quintic polynomials I]\label{main}
Suppose $f:\F^n \rightarrow \F$ is a degree five polynomial with $\bias(f)=\delta$. 
There exist $c_{\ref{main}}\leq c(\delta)$, nonconstant polynomials $G_1,...,G_c, H_1,...,H_c$ and a polynomial $Q$ such that the following holds.
\begin{itemize}
\item $f= \sum_{i=1}^c G_i H_i+Q$.
\item For every $i\in [c]$, $\deg(G_i)+\deg(H_i)\leq 5$.
\item $\deg(Q)\leq 4$.
\end{itemize}
\end{theorem}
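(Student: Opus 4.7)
The strategy extends the derivative-and-lift approach that Haramaty--Shpilka used for \cref{HS:cubic,HS:quartic}, combined with polynomial regularity to handle the new obstructions that appear at degree five.

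\emph{Step 1: biased quartic derivatives.} The first step is to show that $\bias(f)=\delta$ forces many directional derivatives $D_y f(x) := f(x+y)-f(x)$ to be biased. A short calculation gives $\bias(f)^2=\bbE_y\bbE_x[\omega^{D_y f(x)}]$, so $\bias(f)^2\leq \bbE_y[\bias(D_y f)]$ and by Cauchy--Schwarz $\delta^4\leq \bbE_y[\bias(D_y f)^2]$. A Markov step then yields that at least a $\delta^4/2$ fraction of $y\in\F^n$ produce quartic polynomials $D_y f$ with $\bias(D_y f)\geq \delta^2/\sqrt{2}$. Applying \cref{HS:quartic} to each such $y$ gives a decomposition
\[
D_y f \;=\; \sum_{i=1}^{c'} \ell_{i,y}\,G_{i,y} \;+\; \sum_{i=1}^{c'} Q_{i,y}\,Q'_{i,y},
\]
with $c'=\poly(|\F|/\delta)$, $\ell_{i,y}$ linear, $Q_{i,y},Q'_{i,y}$ quadratic, and $G_{i,y}$ cubic.

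\emph{Step 2: regularize and amalgamate.} The next step is to assemble these per-$y$ decompositions into a single bounded-complexity decomposition of $f$ itself. I would invoke a polynomial regularity lemma in the spirit of Bhattacharyya et al.\ to collect the low-degree factors $\{\ell_{i,y},Q_{i,y},Q'_{i,y},G_{i,y}\}_{i,y}$ produced above into a single regular factor $\cF=(P_1,\ldots,P_m)$ of polynomials of degrees at most $4$. Regularity ensures that $(P_1(x),\ldots,P_m(x))$ is near-uniformly distributed on $\F^m$, so the structural data available for a dense set of biased directions $y$ propagates into a rigid global identity: the degree-$5$ homogeneous part $f_5$ of $f$ must lie, modulo degree-$\leq 4$ corrections, in the span of products $GH$ with $\deg G+\deg H\leq 5$ whose factors are built from $\cF$. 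Extracting a bounded list of such products $\{G_iH_i\}_{i=1}^c$ and absorbing all residual degree-$\leq 4$ terms into $Q$ gives the claimed form $f=\sum_{i=1}^c G_i H_i + Q$. A final Bhowmick--Lovett-style argument (cf.\ \cite{BL15largefield}) lets one eliminate the $|\F|$-dependence from $c$, leaving $c\leq c(\delta)$.

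\emph{Main obstacle.} The hardest point is the amalgamation in Step~2. The decompositions supplied by \cref{HS:quartic} are far from canonical, so different biased directions $y$ may yield entirely distinct factors $\ell_{i,y},Q_{i,y},\ldots$, and one cannot simply average them coordinate-wise. In small characteristic the usual Taylor trick of recovering $f_5$ from repeated derivatives also breaks down because $5!$ may vanish in $\F$, so the lift from ``structure on $D_y f$ for many $y$'' to ``structure on $f$'' must go through the equidistribution properties of the regular factor $\cF$ rather than direct integration. The role of the $d<|\F|+4$ hypothesis (here $d=5$) is precisely to rule out the degenerate derivative behaviour that would otherwise obstruct this lift and to ensure that the symmetrized derivative data pins down $f_5$ up to a degree-$\leq 4$ polynomial.
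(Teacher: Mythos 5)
Your Step~1 is essentially the paper's \cref{lem:biasedderivatives} (with slightly different exponents), so that part is fine. The difficulty lies in Step~2, and you yourself flag it as the ``hardest point,'' but the mechanism you propose --- dumping all the per-$y$ factors $\ell_{i,y}, Q_{i,y},\ldots$ into one regular factor and claiming equidistribution ``propagates'' the local structure into a global identity for $f_5$ --- is not an argument; it is a hope. As stated it does not yield a bounded decomposition, because the set of polynomials you wish to regularize a priori has size growing with $n$ (one tuple for each biased $y$), and regularizing cannot reduce that to a constant.

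The paper's actual route is quite different and closes exactly this hole with three ingredients you never invoke. First, rather than keeping a dense set of biased directions, it applies Bogolyubov--Chang (\cref{chang}) to the set $A$ of biased directions to produce a \emph{subspace} $V\subseteq kA-kA$ of bounded codimension on which, by subadditivity of rank (\cref{claim:subadditive}), \emph{every} derivative $D_y f$ has bounded classical rank. Second, after shifting to $W=V+h$ with $\bias(f\vert_W)\geq\delta$, it applies the Kaufman--Lovett form of \cref{rankreg} to write $f\vert_W = \Gamma(D_{y_1}f\vert_W,\ldots,D_{y_{r_0}}f\vert_W)$ for a \emph{bounded} number $r_0$ of directions; only then does one get a bounded family of degree-$\leq d-2$ classical polynomials to regularize. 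Third --- and this is what actually produces the bilinear shape $\sum G_iH_i + Q$ --- it applies the degree-preservation lemma (\cref{lem:degreepreserve}): by evaluating $\cK$ on disjoint monomials $R'_i$ of the same degrees, one forces $\cK$ itself to be a polynomial all of whose monomials have weighted degree $\leq d$, which is precisely the source of the products $G_iH_i$. Your proposal has no substitute for this step; ``absorbing residual terms into $Q$'' doesn't explain why the nonlinear part of $\Gamma$ contributes only bounded-complexity products. Finally, the role of $d<|\F|+4$ is not the Taylor/characteristic issue you describe: it is needed so that regularization can be carried out entirely with \emph{classical} polynomials (via \cref{generaldfactorreg} and the unnecessary-depths theorem \cref{lem:unnecessarydepths}), because the strong-rank notion $\nrank$ only makes sense classically and nonclassical atoms would break the product structure.
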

Note that $c_{\ref{main}}$ only depends on $\delta$, and has no dependence on $n$ or $|\F|$.
%
We also prove that every biased quintic polynomial is constant on an affine subspace of dimension $\Omega(n)$.
\begin{theorem}[Biased quintic polynomials II]\label{main2}
Suppose $f:\F^n \rightarrow \F$ is a degree five polynomial with $\bias(f)=\delta$. There exists an affine subspace $V$ of dimension $\Omega(n)$ such that $f\vert_V$ is constant, where the constant hidden in $\Omega$ depends only on $\delta$.
\end{theorem}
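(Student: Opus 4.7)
The plan is to combine \cref{main} with the theorem of Cohen and Tal~\cite{CT15} for degree-at-most-four polynomials, thereby reducing the degree-five case to the degree-four case.

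Applying \cref{main}, write $f = \sum_{i=1}^{c} G_i H_i + Q$ with $c\le c_{\ref{main}}(\delta)$, $\deg(G_i)+\deg(H_i)\le 5$, and $\deg(Q)\le 4$. After relabeling each pair so that $\deg(G_i)\le\deg(H_i)$, every summand of total degree exactly five has $\deg(G_i)\le 2$.

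The heart of the argument is to construct an affine subspace $W\subseteq\F^n$ of dimension $\Omega_\delta(n)$ on which every $G_i$ is constant: on any such $W$, each product $G_iH_i\vert_W$ reduces to a scalar multiple of $H_i\vert_W$, so $f\vert_W$ has degree at most four. To construct $W$, I plan to invoke the polynomial regularity lemma of Bhattacharyya et al., refining $\{G_i\}_{i=1}^c$ into a bounded collection of regular atoms $R_1,\ldots,R_m$ (with $m=m(\delta)$) of degree at most two such that each $G_i$ is a polynomial in the atoms. I then make the atoms simultaneously constant by iterated restriction: linear atoms are made constant by restricting to a hyperplane (codimension one each); quadratic atoms are made constant by restricting to a maximal totally isotropic affine subspace of the current ambient space, which by the canonical form theorem for quadratic polynomials~\cite{lidl} has at least half the current dimension. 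Iterating over the $m$ atoms yields an affine subspace $W$ of dimension at least $n/2^m=\Omega_\delta(n)$ on which every $R_j$, and hence every $G_i$, is constant.

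Finally, I appeal to the Cohen-Tal theorem. By the triangle inequality applied to the decomposition $\bias(f) = \bigl|\E_v\,\E_{w\in W}\omega^{f(v+w)}\bigr|$ over coset representatives $v$ of $W$ in $\F^n$, some translate $W+v$ satisfies $\bias(f\vert_{W+v})\geq\delta$. Since $f\vert_{W+v}$ has degree at most four on $W+v$, Cohen-Tal provides an affine subspace of $W+v$ of dimension $\Omega(\dim W)=\Omega_\delta(n)$ on which $f$ is constant. I expect the main technical difficulty to be the atom-annihilation in step two: the naive iterated-restriction approach loses a factor of two in dimension per quadratic atom, giving the crude bound $n/2^m$; this suffices because $m=m(\delta)$ does not depend on $n$, but a sharper argument selecting a common totally isotropic subspace for all quadratic atoms simultaneously (or exploiting the joint equidistribution of the regular factor more directly) would give better quantitative dependence on $\delta$.
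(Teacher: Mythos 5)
Your proof is correct and follows the same high-level strategy as the paper's (obtain a decomposition into products of lower-degree polynomials, annihilate the lower-degree factors, handle the residual), but the mechanics differ at two points, so it is worth contrasting.

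The paper derives \cref{main2} as the $d=5$ case of \cref{generald2}, and its proof does \emph{not} invoke \cref{main} as a black box. Instead it re-runs the argument of \cref{generald} inside a subspace $W$ and keeps track of the fact that the factors $R_1,\dots,R_{c_2}$ producing the $G_i,H_i$ form an \emph{unbiased} polynomial factor; this is used to show that the degree-$\le d-2$ leftover $M$ in $f\vert_W=\sum\alpha_i G_iH_i+M$ is actually a constant. Only after that does the paper invoke the Cohen--Tal subspace lemma (\cref{thm:avishay}) on the low-degree factors. Your route instead takes the decomposition $f=\sum G_iH_i+Q$ from \cref{main} at face value, lets $Q$ be an arbitrary degree-$\le 4$ polynomial, and absorbs it into a second invocation of Cohen--Tal: after making all $G_i$ constant, $f\vert_W$ (and hence $f\vert_{W+v}$ for the biased translate) is a genuine degree-$\le 4$ polynomial, so the degree-four Cohen--Tal result finishes. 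This is a clean reduction that sidesteps the unbiasedness bookkeeping; the cost is that it uses the full degree-$\le4$ Cohen--Tal theorem rather than just the elementary subspace lemma \cref{thm:avishay}. It also explains why this particular argument tops out exactly at $d=5$: the residual after killing the $G_i$'s has degree $d-1$, and $d-1=4$ is the largest degree for which Cohen--Tal gives $\Omega(n)$.

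Two smaller remarks about your step producing $W$. First, the detour through the Bhattacharyya et al.\ regularity lemma followed by iterated restriction (linear atoms by hyperplanes, quadratic atoms by isotropic subspaces) is unnecessary: the paper's own citation \cref{thm:avishay} applied directly to the collection $\{G_i\}_{i\le c}$, all of degree $\le 2$, already yields an affine subspace of dimension $\Omega(n/c)=\Omega_\delta(n)$ on which every $G_i$ is constant, with no regularization and no canonical-form theorem for quadratics. Second, and related, your $n/2^m$ bound with $m$ coming from a polynomial regularity lemma gives a much worse (Ackermann-type rather than the already-bad but better) dependence on $\delta$ than the direct application of \cref{thm:avishay}; this does not affect the truth of the $\Omega_\delta(n)$ statement but is worth noting. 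With these two simplifications your argument becomes essentially a clean alternative proof of \cref{main2}.
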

\cref{main2} was previously only known for degrees $\leq 4$. The case of quadratics when $\F=\F_2$ is Dickson's theorem~\cite{MR0104735}, and the case of general $\F$ and $d\leq 4$ was proved recently by Cohen and Tal~\cite{CT15} building on \cref{HS:cubic,HS:quartic}. We also remark that the degree five is the largest degree that such a bound can hold. To see this, assume for example that $d=6$ and $\F=\F_2$, and construct a degree $6$ polynomial $f=G(x_1,...,x_n)\cdot H(x_1, \ldots, x_n)$ by picking two random cubic polynomials $G$ and $H$. One observes that $f$ has bias very close to $0$, however, $f$ will not vanish over any subspace of dimension $\Omega(n^{1/2})$.
\cref{main2} has the following immediate corollary.
\begin{corollary}
Suppose $f:\F^n \rightarrow \F$ is a degree five affine disperser for dimension $k$.  Then $f$ is also an affine extractor of dimension $O(k)$.
\end{corollary}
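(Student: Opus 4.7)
The plan is a direct contrapositive argument using \cref{main2}. Fix any target bias $\delta > 0$ that we want to ensure; we will show that for every affine subspace $V \subseteq \F^n$ of dimension at least $C_\delta\, k$, we have $\bias(f\vert_V) < \delta$, where $C_\delta$ is essentially the reciprocal of the hidden constant in the $\Omega_\delta(\cdot)$ of \cref{main2}. Since $\delta$ is a constant chosen once and for all, $C_\delta$ is also a constant, and the conclusion is exactly that $f$ is an affine extractor for dimension $O(k)$.

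The argument runs as follows. Suppose toward contradiction that some affine subspace $V \subseteq \F^n$ of dimension $m = C_\delta\, k$ satisfies $\bias(f\vert_V) \geq \delta$. Fix any affine isomorphism $V \cong \F^m$ and view $f\vert_V$ as a polynomial $g : \F^m \to \F$ of degree at most five with $\bias(g) \geq \delta$. Applying \cref{main2} to $g$ produces an affine subspace $W' \subseteq \F^m$ of dimension at least $c(\delta)\, m$ on which $g$ is constant. Choosing $C_\delta := 1/c(\delta)$ from the outset guarantees $\dim W' \geq k$. Pulling $W'$ back through the affine isomorphism yields an affine subspace $W \subseteq V \subseteq \F^n$ of dimension at least $k$ on which $f$ itself is constant, contradicting the hypothesis that $f$ is an affine disperser for dimension $k$.

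There is essentially no obstacle to overcome: the statement is an immediate contrapositive of \cref{main2} once one notes that \cref{main2} applies not only to $f$ on all of $\F^n$ but to the restriction of $f$ to any affine subspace, since such a restriction is again a polynomial of degree at most five on a space of the appropriate dimension, and the constant-subspace of the restriction lifts back to a constant-subspace of $f$. The only mildly delicate point is bookkeeping the constants: because $c(\delta)$ from \cref{main2} depends only on $\delta$ (not on $n$ or $|\F|$), the resulting extractor dimension $C_\delta\, k$ is genuinely linear in $k$, with the dependence on the target bias $\delta$ absorbed into the $O(\cdot)$ notation.
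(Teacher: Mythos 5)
Your argument is correct and is exactly the ``immediate'' contrapositive that the paper has in mind when it derives this corollary from \cref{main2} without further comment (referring to \cite{CT15} only for the definitions). The two small points worth spelling out --- that \cref{main2} applies uniformly to the restriction of $f$ to any affine subspace since it is again a degree-five polynomial in the subspace coordinates, and that the hidden constant $c(\delta)$ depends only on $\delta$ so the dimension bound is genuinely linear in $k$ --- are both handled, and your choice $C_\delta = 1/c(\delta)$ is the right bookkeeping; the only thing left implicit is that applying \cref{main2} with $\bias(g)\geq\delta$ rather than $=\delta$ is harmless because the hidden constant is monotone in the bias, a standard reading.
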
 
We refer to \cite{CT15} where affine dispersers and extractors and the relations between them are discussed.

\paragraph{Degree $d$ polynomials, with $d<|\F|+4$.} We in fact prove a strong structure theorem for biased degree $d$ polynomials when $d<|\F|+4$, from which \cref{main} follows immediately.
\begin{theorem}[Biased degree $d$ polynomials I (when $d<|\F||\F|+4$)]\label{generald}
Suppose $d>0$ and $\F=\F_{q}$ with $d< q+4$. Let $f:\F^n \rightarrow \F$ be a degree $d$ polynomial with $\bias(f)=\delta$. There exists $c_{\ref{generald}}\leq c(\delta, d)$, nonconstant polynomials $G_1,...,G_c, H_1,...,H_c$ and a polynomial $Q$ such that the following hold.
\begin{itemize}
\item $f= \sum_{i=1}^c G_i H_i+Q$.
\item For every $i\in [c]$, $\deg(G_i)+\deg(H_i)\leq d$.
\item $\deg(Q)\leq d-1$.
\end{itemize}
\end{theorem}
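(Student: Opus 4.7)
The plan is to proceed by induction on $d$, taking as base case $d\leq 4$ supplied by the theorems of Haramaty--Shpilka and (trivially) by the quadratic structure theorem. For the inductive step, given $f$ of degree $d$ with $\bias(f)\geq \delta$, I first invoke the bias-rank structure theorem of Green--Tao and Bhowmick--Lovett to obtain a decomposition $f = \Gamma(R_1,\ldots,R_m)$, where each $R_i$ has degree at most $d-1$ and $m$ depends only on $\delta$ and $d$. I then apply the polynomial regularity lemma of Bhattacharyya et al.\ to refine this collection so that $\{R_i\}$ forms a high-rank polynomial factor; in the regular regime, algebraic identities among the $R_i$'s faithfully reflect the structure of $\Gamma$, and distributions of the $R_i(x)$ on random $x$ are close to uniform on $\F^m$.

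Second, I represent $\Gamma$ as a polynomial in $R_1,\ldots,R_m$ of degree at most $q-1$ in each variable (the canonical representation over $\F_q$). Grouping the resulting monomials $R_1^{\alpha_1}\cdots R_m^{\alpha_m}$ by their nominal $x$-degree $\sum \alpha_i \deg(R_i)$, the contributions of $x$-degree exactly $d$ consist of products of $R_{i_j}$'s whose degrees sum to $d$. Since each $R_i$ has degree at most $d-1$, every such product involves at least two nonconstant factors, and I can split each such monomial as $G_j H_j$ with $\deg(G_j)+\deg(H_j)\leq d$ by assigning factors to $G_j$ or $H_j$. All monomials of nominal $x$-degree strictly less than $d$ aggregate into the residual polynomial $Q$ of degree $\leq d-1$. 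The number of product terms produced is bounded by the number of monomials in $\Gamma$, which depends only on $m$, $q$, and $d$, hence ultimately only on $\delta$ and $d$.

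The main obstacle lies in the passage from the function identity $f(x)=\Gamma(R_1(x),\ldots,R_m(x))$ to a polynomial identity with controlled degrees when $d\geq q$: the per-variable degree cap $q-1$ in the canonical representation of $\Gamma$ means that a monomial of large nominal degree in the $R_i$'s can in fact contribute to terms of smaller $x$-degree, because $R_i^q$ and $R_i$ agree as functions but not as polynomials. To make the argument go through, one must pair the regularity of $\{R_i\}$ with an accounting of how much collapse is possible. The hypothesis $d<q+4$ precisely limits how severely this can occur: it bounds the number of distinct ``high-power'' factors that must be consolidated, so that each degree-$d$ contribution can still be reorganized into a product of just two nonconstant polynomials of complementary degree. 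Handling the boundary cases $d\in\{q,q+1,q+2,q+3\}$, presumably by a refined splitting argument that absorbs the low-degree discrepancies into $Q$ using the inductive hypothesis on lower-degree biased polynomials that appear as intermediate factors, is where the bulk of the technical work must go; it is also the reason the theorem cannot be pushed to $d\geq q+4$ without new ideas.
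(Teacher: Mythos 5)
Your proposal has a genuine gap at the step you yourself flag as ``the main obstacle,'' and your reading of the $d<q+4$ hypothesis misses the actual mechanism. The paper does not regularize a rank-theorem decomposition of $f$ directly; it first descends to polynomials of degree $\leq d-2$ by a two-step argument: \cref{lem:biasedderivatives}, Bogolyubov--Chang (\cref{chang}), and subadditivity of $\crank$ (\cref{claim:subadditive}) produce a bounded-codimension affine subspace $W$ on which every derivative of $f$ has bounded classical rank, and then \cref{rankreg} applied to $f\vert_W$ shows $f\vert_W$ is a function of boundedly many classical polynomials of degree $\leq d-2$. This descent by \emph{two} degrees is precisely what makes the $d<q+4$ hypothesis bite: in \cref{generaldfactorreg}, regularizing a degree-$\leq(d-2)$ classical factor introduces nonclassical polynomials only of degree $\leq d-3 \leq q$, hence of depth $\leq 1$, which can then be eliminated via \cref{lem:unnecessarydepths}. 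Starting from degree-$\leq(d-1)$ polynomials, as you propose, allows nonclassical polynomials of degree up to $d-2$ (which can be $q+1$, and over $\F_2$ for $d=5$ can already have depth $2$) that are not covered by the removal machinery; so your claim that $d<q+4$ ``limits the number of distinct high-power factors'' is not what is going on.

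Second, grouping the monomials of $\Gamma$ by nominal $x$-degree is unjustified on its own: monomials $R_1^{\alpha_1}\cdots R_m^{\alpha_m}$ of nominal degree $>d$ need not have zero coefficient, and cancellations among them could leave products $G_jH_j$ with $\deg G_j + \deg H_j > d$. The paper resolves this with the degree preservation lemma (\cref{lem:degreepreserve}): because the regularized $\{R_i\}$ form a high-rank factor, substituting disjoint monomials $R'_i$ of the same degrees does not increase the $x$-degree, which forces the polynomial $\cK$ to be supported only on monomials of nominal $x$-degree $\leq d$. Your proposal does not identify this lemma or any substitute for it, and that is precisely the ``accounting of how much collapse is possible'' that you gesture at but do not supply. (Your induction on $d$ is also superfluous: the inductive hypothesis is never load-bearing in the argument you sketch, and the paper's proof of \cref{generald} is direct.)
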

We also prove a general version of  \cref{main2} for $d<|\F|+4$.
\begin{theorem}[Biased degree $d$ polynomials II (when $d<|\F|+4)$]\label{generald2} 
Suppose $d>0$ and $\F=\F_q$ with $d<q+4$. Let $f:\F^n \rightarrow \F$ be a degree $d$ polynomial with $\bias(f)=\delta$. There exists an affine subspace $V$ of dimension $\Omega_{d,\delta}(n^{1/\lfloor \frac{d-2}{2}\rfloor})$ such that $f\vert_V$ is a constant.
\end{theorem}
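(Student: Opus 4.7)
The plan is to induct on $d$, with the base cases $d \leq 4$ supplied by the Cohen-Tal result. For the inductive step, apply Theorem \ref{generald} to decompose
\[
f = \sum_{i=1}^{c} G_i H_i + Q,
\]
where $c = c(d,\delta)$, $\deg G_i + \deg H_i \leq d$, $\deg Q \leq d - 1$, and each $G_i, H_i$ is nonconstant. After relabeling so that $\deg G_i \leq \deg H_i$, set $e := \lfloor d/2 \rfloor$, so $\deg G_i \leq e$ for every $i$.

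The main step is to find a large affine subspace $V \subseteq \F^n$ such that (i) every $G_i|_V$ is a constant, so that $f|_V = \sum_i c_i H_i|_V + Q|_V$ is a polynomial of degree at most $d - 1$, and (ii) $\bias(f|_V) \geq \delta$. For (i), I would carry the construction out through the \emph{period group} of each $G_i$: produce a linear subspace $V_0 \subseteq \F^n$ lying in the period group of every $G_i$ (so that each $G_i$ is constant on every translate of $V_0$) by iteratively restricting, one $G_i$ at a time, to a subspace on which the current $G_i$ becomes simple enough to have a large period. The quantitative input is the classical fact that a polynomial of degree $e$ in $m$ variables is constant on an affine subspace of dimension $\Omega_{q,e}(m^{1/(e-1)})$ --- for $e = 2$ via maximal isotropic-subspace plus radical constructions for quadratic forms over prime fields, and for $e \geq 3$ via Birch-Schmidt-type zero-subspace theorems on forms. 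Since $c$ is constant, iterating loses only a constant factor, yielding $\dim V_0 = \Omega(n^{1/(e-1)}) = \Omega(n^{1/\lfloor (d-2)/2\rfloor})$.

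For (ii), having fixed $V_0$ inside the common period group, I would apply the pigeonhole inequality
\[
\E_a\,\Bigl|\E_{v \in V_0}\omega^{f(v+a)}\Bigr| \geq \Bigl|\E_a \E_v \omega^{f(v+a)}\Bigr| = \bias(f) = \delta
\]
to obtain a translate $V = a + V_0$ with $\bias(f|_V) \geq \delta$; the period-group property guarantees the $G_i$'s remain constant on that translate. On this $V$ the restriction $f|_V$ is then a polynomial of degree $\leq d - 1$ with bias $\geq \delta$ in $\dim V$ variables, so the inductive hypothesis for degree $d - 1$ delivers a further affine subspace on which $f$ is constant, and composing dimensions gives the claimed bound.

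The main obstacle is exactly (i)+(ii) together: arranging $V_0$ to lie in the period group of every $G_i$ while retaining dimension $\Omega(n^{1/(e-1)})$. For a high-rank $G_i$ the period group in $\F^n$ can be very small, so the iteration must first restrict to a subspace on which $G_i$ has a large period --- e.g., for quadratic $G_i$ this is a totally isotropic subspace combined with the radical, on which $G_i$ vanishes and the whole subspace becomes a period. Pushing this through for higher-degree $G_i$ and for all $c$ polynomials simultaneously, without losing more than a constant factor in the exponent of $n$, is the delicate part; this is where the polynomial regularity machinery of Bhattacharyya et al.\ naturally enters, bringing each $G_i$ into a form with controllable rank so that its period group is large enough to carry the argument.
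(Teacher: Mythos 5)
Your approach inducts on $d$ and applies \cref{generald} as a black box, then tries to find an affine translate on which all the $G_i$'s are simultaneously constant and the bias of $f$ is preserved. This is a genuinely different route from the paper's, and it hits an obstacle that the paper's argument is specifically built to avoid. The paper does not induct: it re-opens the proof of \cref{generald}, which produces a decomposition $f\vert_W=\sum_i \alpha_i G_iH_i+M$ in which every $G_i$, every $H_i$, and the residual $M$ are built from a single classical polynomial factor $\{R_1,\dots,R_{c_2}\}$, and that factor can be taken $\tfrac{\delta}{2}q^{-c_2}$-unbiased. Playing $\bias(f\vert_W)\geq\delta$ against this unbiasedness, the key claim is that $M$ is a \emph{constant}, not merely of degree $\leq d-1$. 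Once $M$ is constant, one shifts the $R_j$'s so that they all vanish at a fixed base point $u_0$, whence every product $G_iH_i$ vanishes at $u_0$; applying \cref{thm:avishay} at that $u_0$ to the smaller-degree factor of each product (degree $\leq\lfloor d/2\rfloor$) yields a subspace $U$ on which each such factor is identically $0$, so every $G_iH_i$ vanishes on $u_0+U$ and $f\vert_{u_0+U}$ is literally the constant $\sigma_0$. No bias needs to survive the final restriction and there is no inductive step; the exponent $1/\lfloor(d-2)/2\rfloor$ appears in one shot.

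Your proposal has two concrete gaps. First, the period-group mechanism does not go through: for a high-rank $G_i$ of degree $e\geq2$ the period group $\{v:D_vG_i\equiv 0\}$ is typically trivial, and being constant on \emph{some} affine translate $u_0+U$ (which \cref{thm:avishay} provides, with $U$ allowed to depend on $u_0$) is strictly weaker than $U$ lying in the period group. Already for $G=\sum_i x_{2i-1}x_{2i}$, $G$ vanishes on $U=\{x_2=x_4=\cdots=0\}$, yet $G\vert_{a+U}$ is a nonconstant linear form for generic $a$; so step (ii), which needs the $G_i$'s constant on the very translate that the averaging argument selects, is not justified. Second, even if (i) and (ii) were arranged, inducting on $d$ compounds the exponents multiplicatively: each pass from degree $d$ to $d-1$ costs a factor $\lfloor d/2\rfloor-1$, landing at dimension $\Omega\bigl(n^{1/\prod_{d'=6}^{d}(\lfloor d'/2\rfloor-1)}\bigr)$, which is strictly weaker than the claimed $\Omega\bigl(n^{1/\lfloor(d-2)/2\rfloor}\bigr)$ for $d\geq 7$ (e.g.\ $n^{1/4}$ versus $n^{1/2}$ at $d=7$). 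The idea you are missing is that the $Q$ in \cref{generald} can itself be taken to be a constant --- a fact that falls out of the unbiasedness of the regularized classical factor used inside that proof --- and this single observation removes both the bias-preservation requirement and the inductive loss.
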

Cohen and Tal~\cite{CT15} recently showed that any degree $d$ biased polynomial is constant on an $\Omega_{\delta}(n^{1/(d-1)})$ dimensional affine subspace. \cref{generald2} improves on this by a quadratic factor, when $d<|\F|+4$.

Our results for quintic polynomials follow immediately.
\begin{proofof}{of \cref{main,main2}}
\cref{main,main2} follow curiously as special cases of \cref{generald} and \cref{generald2} as $|\F|\geq 2$ and $5<2+4$. 
\end{proofof}
\paragraph{Algorithmic aspects.} Using a result of Bhattacharyya, et. al.~\cite{BHT15} who gave an algorithm for finding prescribed decompositions of polynomials, we show that whenever such a strong structure exists, it can be found algorithmically in time polynomial in $n$. Combined with \cref{generald}, we obtain the following algorithmic structure theorem.
\begin{theorem}\label{algorithmic}
Suppose $\delta>0$, $d>0$ are given, and let $\F=\F_q$ be a prime field satisfying $d<q+4$. There is a deterministic algorithm that runs in time $O(n^{O(d)})$ and given as input a degree $d$ polynomial $f:\F^n\to \F$ satisfying $\bias(f)=\delta$, outputs a number $c\leq c(\delta, |\F|,d)$, a collection of degree $\leq d-1$ polynomials $G_1,...,G_c, H_1,...,H_c:\F^n\to \F$ and a polynomial $Q:\F^n\to \F$, such that
\begin{itemize}
\item $f= \sum_{i=1}^c G_iH_i +Q$.
\item For every $i\in [c]$, $\deg(G_i)+\deg(H_i)\leq d$.
\item $\deg(Q)\leq d-1$.
\end{itemize}
\end{theorem}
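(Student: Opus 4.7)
The plan is to couple the existential guarantee of \cref{generald} with the algorithmic machinery of Bhattacharyya et al.~\cite{BHT15}. \cref{generald} already tells us that, under the hypothesis $d<q+4$, a decomposition $f=\sum_{i=1}^c G_iH_i + Q$ of the desired form exists with some constant $c\leq c(\delta,d)$ independent of $n$. The remaining task is purely algorithmic: to efficiently \emph{find} such a decomposition rather than merely assert its existence.

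First, I would invoke the algorithmic polynomial regularity lemma of~\cite{BHT15} on $f$ to produce, in time $O(n^{O(d)})$, an explicit representation $f=\Gamma(P_1,\ldots,P_r)$ where $\{P_1,\ldots,P_r\}$ is a polynomial factor of bounded degree and high rank, and $r\leq r(\delta,d)$ depends only on $\delta$ and $d$. Second, I would enumerate over all degree profiles $(\deg G_i,\deg H_i)_{i=1}^c$ with $\deg G_i+\deg H_i\leq d$ and $c\leq c(\delta,d)$, and over the combinatorial template of \cref{generald}. For each fixed template, I apply the prescribed-decomposition subroutine of~\cite{BHT15}: on a high-rank factor, the equidistribution of $(P_1,\ldots,P_r)$ implies that any algebraic relation among the $P_j$ holding on $\F^n$ already holds as a formal identity, so the problem of finding the polynomials $G_i,H_i,Q$ reduces to a constant-size search over polynomials in $P_1,\ldots,P_r$ (together with auxiliary linear forms, if needed). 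Since the existence of a valid solution is guaranteed by \cref{generald}, at least one branch of the enumeration must succeed; the successful output is the required decomposition.

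The bookkeeping is as follows. Producing the initial regular factor via~\cite{BHT15} costs $O(n^{O(d)})$. Each template enumeration branch involves only $O_{\delta,d}(1)$ candidate algebraic relations, each of which is verified by evaluating a degree $\leq d$ polynomial identity on $\F^n$, again in time $O(n^{O(d)})$. Since the number of branches depends only on $\delta$ and $d$, the overall running time is $O(n^{O(d)})$ as claimed, and the output satisfies the three bulleted conditions by construction.

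The main obstacle is verifying that the precise form required by \cref{generald} (a bounded sum of products of nonconstant polynomials with total degree $\leq d$, plus a degree $\leq d-1$ tail) falls within the class of ``prescribed decompositions'' that the~\cite{BHT15} framework can detect on a regular factor. If the framework does not directly handle products of two polynomials of specified degrees, I would add a short reduction step: on a factor of sufficiently high rank relative to $c(\delta,d)$, any two factors $G_i, H_i$ whose product contributes to $f$ must themselves be expressible (up to a degree $\leq d-1$ error, which is absorbed into $Q$) as polynomial combinations of the $P_j$. This again reduces the search to a constant-sized algebraic problem that can be solved by brute force in time independent of $n$, completing the proof.
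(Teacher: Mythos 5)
Your approach is in the right spirit and uses the same two ingredients as the paper — the existential guarantee of \cref{generald} and the algorithmic machinery of~\cite{BHT15} — but it is more roundabout than necessary, and the ``obstacle'' you flag at the end is not actually an obstacle.

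The key point you underuse is that Theorem~1.6 of~\cite{BHT15} (stated in the paper as \cref{algorithmicdecomposition}) is already a \emph{prescribed-decomposition} algorithm for an \emph{arbitrary} composition function $\Gamma:\F^k\to\F$ and an arbitrary vector of degree caps $\Delta=(\Delta_1,\ldots,\Delta_k)$: given $f$, $\Gamma$, $\Delta$, it runs in time $\mathrm{poly}(n)$ and either outputs polynomials $Q_1,\ldots,Q_k$ with $\deg Q_i\le\Delta_i$ and $f=\Gamma(Q_1,\ldots,Q_k)$, or correctly reports that no such decomposition exists. So one does not need a separate first pass to compute a regular factor, nor does one need to argue about equidistribution on that factor, nor to add the ``short reduction step'' you propose at the end: the bilinear-plus-linear form $\Gamma(x_1,\ldots,x_c,y_1,\ldots,y_c,z)=\sum_{i=1}^c x_iy_i+z$ is just one particular $\Gamma$ that can be handed to Theorem~1.6 as is. The paper's algorithm is exactly this: enumerate the $O_{\delta,|\F|,d}(1)$ choices of $c\le C(\delta,|\F|,d)$ and degree profiles $(d_1,\ldots,d_c,d_1',\ldots,d_c',d'')$ with $d_i+d_i'\le d$ and $d''\le d-1$, and for each such choice call \cref{algorithmicdecomposition} with that $\Gamma$ and $\Delta$; by \cref{generald} at least one call succeeds. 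In short, your plan would work, but it re-derives inside the proof machinery (regular factor, equidistribution, constant-size search) that \cite{BHT15} already packages as a black box; replacing your two-stage scheme with a single call per enumeration branch both simplifies the argument and removes the uncertainty you expressed about whether the framework handles products of polynomials of specified degrees.
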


\section*{Organization}
In \cref{sec:higher} we present the basic tools from higher-order Fourier analysis. In \cref{generic} we discuss useful properties of a pseudorandom collection of polynomials. \cref{generald} is proved in \cref{section:generald}, and \cref{generald2} is proved in \cref{section:generald2}. We discuss the algorithmic aspects in \cref{section:algorithmic}. We end with a discussion of future directions in \cref{section:conclusions}.

\section*{Notation}
Let $\D=\{z \in \C: |z| \le 1\}$ be the unit disk in the complex plane. Let $\T= \R/\Z$.  Suppose that $\F=\F_q$ is a finite prime field, let $e_\F:\F\rightarrow \D$ denote the function $e_\F(x):= e^{\frac{2\pi i x}{|\F|}}$, and let $e:\T\to\D$ denote the function $e(x):=e^{2\pi i x}$. For functions $f,g:\F^n\rightarrow \C$, define $$\langle f, g\rangle:= \frac{1}{|\F|^n} \sum_{x\in \F^n} f(x) \overline{g(x)}.$$ 
For an integer $a$, denote by $[a]:=\{1,\ldots,a\}$. 

\section{Preliminary results from higher-order Fourier analysis}\label{sec:higher}
Throughout this section, assume that $\F=\F_q$ for a fixed prime $q$. Extensions to large finite fields will be discussed later in \cref{generalF}.

\subsection{Nonclassical Polynomials}

Let $d\geq 0$ be an integer. It is well-known that for functions $P:\R^n\rightarrow \R$, a polynomial of degree $\le d$ can be defined in one of two equivalent ways: We say that $P$ is a polynomial of degree $\le d$ if it can be written as $$
P(x_1,...,x_n)=\sum_{\substack{i_1,...,i_n \ge 0\\ i_1+\cdots+i_n \le d}} c_{i_1,...,i_n}x_1^{i_1}\cdots x_n^{i_n},
$$
with coefficients $c_{i_1,...,i_n}\in \R$. This can be thought of as a global definition for polynomials over the reals. Equivalently, the local way of defining $P$ to be a polynomial of degree $\le d$ is to say that it is $d+1$ times differentiable and its $(d+1)$-th derivative vanishes everywhere.

In finite characteristic, i.e. when  $P:\F^n\rightarrow G$ for a prime field $\F$ and an abelian group $G$, the local definition of a polynomial uses the notion of additive directional derivatives.

\begin{definition}[Polynomials over finite fields (local definition)]
For an integer $d \geq 0$, a function $P: \F^n \to G$ is said to be a
{\em polynomial of degree $\leq d$} if for all $y_1,
\dots, y_{d+1}, x \in \F^n$, it holds that
\begin{equation*}
(D_{y_1}\cdots D_{y_{d+1}} P)(x) = 0,
\end{equation*}
where $D_y P(x)=P(x+y)-P(x)$ is the additive derivative of $P$ with direction $y$ evaluated at $x$. The {\em degree} of $P$ is the smallest $d$ for which the above holds.
\end{definition}

It follows simply from the definition that for any direction $y\in \F^n$, $deg(D_yP)<deg(P)$. In the ``classical'' case of polynomials $P:\F^n\rightarrow \F$, it is a well-known fact that the global and local definitions coincide. However, the situation is different when $G$ is allowed to be other groups.\ignore{One way to suspect this is the fact that we are not able to divide by $d!$ in some cases, in order to be able to make use of Taylor expansions.} For example when the range of $P$ is $\R/\Z$, it turns out that the global definition must be refined to the ``nonclassical polynomials''. This phenomenon was noted by Tao and Ziegler~\cite{MR2948765} in the study of Gowers norms.

Nonclassical polynomials arise when studying functions $P:\F^n \to \T$ and their exponents $f=e(P):\F^n \to \C$.

\begin{definition}[Nonclassical Polynomials]\label{poly}
For an integer $d \geq 0$, a function $P: \F^n \to \T$ is said to be a
{\em nonclassical polynomial of degree $\leq d$} (or simply a
{\em polynomial of degree $\leq d$}) if for all $y_1,
\dots, y_{d+1}, x \in \F^n$, it holds that
\begin{equation}\label{eqn:poly}
(D_{y_1}\cdots D_{y_{d+1}} P)(x) = 0.
\end{equation}
The {\em degree} of $P$ is the smallest $d$ for which the above holds.
A function $P : \F^n \to \T$ is said to be a {\em classical polynomial of degree
$\leq d$} if it is a nonclassical polynomial of degree $\leq d$
whose image is contained in $\frac{1}{q} \Z/\Z$.

Denote by $\poly(\F^n\rightarrow \T)$, $\poly_d(\F^n\rightarrow \T)$ and $\poly_{\leq d}(\F^n\rightarrow \T)$, the set of all nonclassical polynomials over $\F^n$, all nonclassical polynomials of degree $d$ and all nonclassical polynomials of degree $\leq d$ respectively.
\end{definition}
The following lemma of Tao and Ziegler~\cite{MR2948765} shows that a classical
polynomial $P$ of degree $d$ must always be of the form $x \mapsto \frac{|Q(x)|}{q}$, where $Q : \F^n \to \F$ is a polynomial (in the usual sense) of degree $d$, and $|\cdot|$ is the standard map from $\F$ to
$\set{0,1,\dots,q-1}$. This lemma also characterizes the structure of nonclassical
polynomials.

\begin{lemma}[Lemma 1.7 in \cite{MR2948765}]\label{struct}
A function $P: \F^n \to \T$ is a polynomial of degree $\leq d$ if and
only if $P$ can be represented as
$$P(x_1,\dots,x_n) = \alpha + \sum_{\substack{0\leq d_1,\dots,d_n< q; k \geq 0:
  \\ {0 < \sum_i d_i \leq d - k(q-1)}}} \frac{ c_{d_1,\dots, d_n,
  k} |x_1|^{d_1}\cdots |x_n|^{d_n}}{q^{k+1}} \mod 1,
$$
for a unique choice of $c_{d_1,\dots,d_n,k} \in \set{0,1,\dots,q-1}$
and $\alpha \in \T$.  The element $\alpha$ is called the {\em
  shift} of $P$, and the largest integer $k$ such that there
exist $d_1,\dots,d_n$ for which $c_{d_1,\dots,d_n,k} \neq 0$ is called
the {\em depth} of $P$. A depth-$k$ polynomial $P$ takes values in an affine shift of the subgroup $\U_{k+1}:= \frac{1}{q^{k+1}} \Z/\Z$. Classical polynomials correspond to
polynomials with $0$ shift and $0$ depth.
\end{lemma}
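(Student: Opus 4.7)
The plan is to induct on the degree $d$, taking the multivariate monomial $M_{d_1,\dots,d_n,k}(x) \defeq |x_1|^{d_1}\cdots|x_n|^{d_n}/q^{k+1} \bmod 1$ as the atomic building block whose degree will turn out to be exactly $\sum_i d_i + k(q-1)$. The key underlying observation is that $\mathrm{Poly}_{\leq d}(\F^n \to \T)$ is an abelian group under pointwise addition (since the iterated derivative $D_{y_1}\cdots D_{y_{d+1}}$ is linear), so both directions reduce to identifying a generating set, and uniqueness reduces to a linear independence statement.

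For the ``if'' direction, it suffices to show that each summand $M_{d_1,\dots,d_n,k}$ with $\sum d_i + k(q-1) \leq d$ is a polynomial of degree at most $\sum d_i + k(q-1)$. I would induct on this bound. The base case $k=0$ is classical: $|x_1|^{d_1}\cdots|x_n|^{d_n}/q \bmod 1$ is exactly $|Q|/q$ for the classical polynomial $Q = x_1^{d_1}\cdots x_n^{d_n}$ of degree $\sum d_i$. For the inductive step, compute $D_y M_{d_1,\dots,d_n,k}(x)$ using the coordinatewise identity $|x_i+y_i| = |x_i|+|y_i| - q\cdot \mathbf{1}[|x_i|+|y_i|\geq q]$, expand the product $|x_1+y_1|^{d_1}\cdots|x_n+y_n|^{d_n}$, and separate the terms into (a) ``no-carry'' contributions with strictly smaller total exponent $\sum d_i - 1$ at the same depth $k$, and (b) ``carry'' contributions in which one or more coordinates pick up an indicator $\mathbf{1}[|x_i|+|y_i|\geq q]$---as a function of $x_i$ this indicator is a classical polynomial of degree exactly $q-1$ over $\F$, and it contributes $q-1$ to the degree while dropping the denominator exponent by one (i.e.\ the depth drops by one). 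In both cases the inductive hypothesis bounds the result by degree $\sum d_i + k(q-1) - 1$ in $x$, establishing that $M_{d_1,\dots,d_n,k}$ itself has degree at most $\sum d_i + k(q-1)$.

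For the ``only if'' direction and uniqueness, I would run a nested induction first on $d$ and then on the depth of $P$. When $\mathrm{depth}(P) = 0$, $P$ takes values in $\tfrac{1}{q}\Z/\Z$, so $P = |Q|/q \bmod 1$ for a unique classical polynomial $Q:\F^n\to\F$ of degree $\leq d$; expanding $Q$ in the reduced monomial basis of $\F[x_1,\dots,x_n]/(x_i^q - x_i)$ (with each exponent less than $q$) gives the required form. When $\mathrm{depth}(P) = k > 0$, consider $q\cdot P:\F^n\to\T$: it has depth $k-1$ and, by the crucial ``degree--depth tradeoff'' (see below), degree at most $d - (q-1)$. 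Apply the outer induction hypothesis to $q\cdot P$ and ``lift'' each monomial $\tfrac{c|x_1|^{d_1}\cdots|x_n|^{d_n}}{q^k}$ in its expansion to the candidate monomial $\tfrac{c|x_1|^{d_1}\cdots|x_n|^{d_n}}{q^{k+1}}$ for $P$; the residual $P - (\text{lift})$ is a polynomial of strictly smaller depth, to which the inner induction applies. Uniqueness follows by triangular elimination down the depth tower: the top-depth coefficients are detected from $q^{k_{\max}}\cdot P$ via evaluation at single basis vectors, then peeled off and the process iterated.

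The main obstacle is the degree--depth tradeoff itself: that multiplying a depth-$k$ polynomial by $q$ in $\T$ strictly decreases its degree by at least $q-1$, not merely keeps it bounded by $d$. This is the only step where the prime characteristic of $\F$ enters nontrivially, and is precisely what guarantees the depth induction terminates. Its proof requires tracking how the carry indicators $\mathbf{1}[|x_i|+|y_i|\geq q]$ propagate under multiple applications of the discrete derivative, and I expect this carry bookkeeping to be the technical heart of the argument; once it is established, the surrounding basis and uniqueness arguments essentially reduce to linear algebra over $\Z/q\Z$.
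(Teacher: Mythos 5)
The paper does not prove this lemma; it imports it verbatim as Lemma~1.7 of Tao and Ziegler~\cite{MR2948765}, so there is no in-paper proof for me to compare your sketch against. Judged on its own, your outline matches the shape of the Tao--Ziegler argument: the group structure of $\poly_{\leq d}(\F^n\to\T)$, the carry identity $|x_i+y_i|=|x_i|+|y_i|-q\cdot\mathbf{1}[|x_i|+|y_i|\geq q]$ with the observation that the indicator (for $y_i\neq0$) has degree exactly $q-1$ in $x_i$, the bookkeeping that a carry drops the depth by one while raising the classical degree by at most $q-1$ (so the ``nonclassical degree'' $\sum d_i + k(q-1)$ strictly decreases under $D_y$), and the peel-off by multiplying by $q$ for the converse and for uniqueness. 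This is all correct in spirit.

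The genuine gap is the one you yourself flag: the degree--depth tradeoff, namely that a degree-$\leq d$ polynomial $P$ of positive depth must satisfy $d\geq q$ and that $qP$ has degree at most $d-(q-1)$. Your ``if'' direction shows only that each monomial $M_{d_1,\dots,d_n,k}$ has degree \emph{at most} $\sum d_i + k(q-1)$; it gives no lower bound and no reason a high-depth polynomial could not accidentally have low degree, which is exactly what the converse and the uniqueness both rest on. This is where primality of $q$ enters, and the standard route is to first settle the univariate case (via the carry analysis you allude to, or equivalently via Lucas-type reasoning on binomial coefficients mod $q$) and then reduce the multivariate statement to restrictions on lines. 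As written, your argument is a correct scaffold with one load-bearing lemma left unproved, and that lemma is not a formality --- it is the actual content of the Tao--Ziegler result.
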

In many cases, for the sake of brevity, we will omit writing ``$\mathrm{mod}\; 1$'' in the description of the defined nonclassical polynomials. 
%
For convenience of exposition, henceforth we will assume
that the shifts of all polynomials are zero. This can be done
without affecting any of the results presented in this text. Under this assumption, all
polynomials of depth $k$ take values in $\U_{k+1}$.

\subsection{Gowers norms}
Gowers norms, which were introduced by Gowers~\cite{Gow01}, play an important role in additive combinatorics, more specifically in the study of polynomials of bounded degree. Gowers norms are defined for functions $F:G \to \C$, where $G$ is any finite Abelian group. In this paper we will restrict our attention to the case of $G=\F^n$.
\ignore{\begin{definition}[Multiplicative Derivative]
The (multiplicative) derivative of $f$ in direction $y \in \F^n$ is given by $\Delta_y f(x)=f(x+y) \overline{f(x)}$.
\end{definition}
Note that if $f(x)=(-1)^{f(x)}$ then $\Delta_y F = (-1)^{D_y f}$.}
The \emph{Gowers norm} of order $d$ for $f$ is defined as the expected $d$-th multiplicative derivative of $f$ in $d$ random directions at a random point.
\begin{definition}[Gowers norm]\label{gowers}
Let $\F=\F_q$ be a finite field, $d >0$.
Given a function $f: \F^n \to \C$, the {\em
  Gowers norm of order $d$} for $f$ is given by
\begin{equation*}
\|f\|_{U^d} := \left|\E_{y_1,\dots,y_d,x \in \F^n} \left[(\Delta_{y_1}\Delta_{y_2} \cdots \Delta_{y_d}f)(x)\right]\right|^{1/2^d} 
= \left|\Ex_{y_1,\dots,y_d,x \in \F^n} \left[\prod_{S \subseteq [d]} \mathcal{C}^{d-|S|} f(x + \sum_{i \in S} y_i) \right]\right|^{1/2^d},
\end{equation*}
where $\mathcal{C}$ is the conjugation operator $\mathcal{C}(z)=\overline{z}$ and $\Delta_y f(x)= f(x+y) \overline{f(x)}$ is the multiplicative derivative of $f$ at direction $y$.
\end{definition}
Note that as $\|f\|_{U^1}= |\Ex\left[ f \right]|$ the Gowers norm of order $1$ is only a semi-norm. However for $d>1$, it turns out that
$\|\cdot\|_{U^d}$ is indeed a norm~\cite{Gow01}. Direct and inverse theorems for Gowers norms relate the Gowers norm to correlation with bounded degree polynomials. 

\begin{theorem}[Direct theorem for Gowers Norm]\label{thm:directgowers}
Let $f:\F^n\rightarrow \C$ be a function and $d \ge 1$ an integer. Then for every degree-$d$ nonclassical polynomial $P:\F^n \to \T$,
$$
|\ip{f, e(P)}| \leq \|f\|_{U^{d+1}}.
$$
\end{theorem}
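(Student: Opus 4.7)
The plan is to rewrite the inner product as a Gowers $U^1$ semi-norm and then exploit two properties: (i) the monotonicity of Gowers norms across orders, and (ii) the identity $D_{y_1}\cdots D_{y_{d+1}} P \equiv 0$ guaranteed by \cref{poly}, which holds because $P$ is a (nonclassical) polynomial of degree $\leq d$. Concretely, I would set $g(x) := f(x)\,\overline{e(P(x))}$, so that $|\langle f, e(P)\rangle| = |\E_x g(x)| = \|g\|_{U^1}$. A standard iterated Cauchy--Schwarz argument (going back to Gowers) upgrades this to $\|g\|_{U^1} \leq \|g\|_{U^{d+1}}$, so it suffices to show $\|g\|_{U^{d+1}} = \|f\|_{U^{d+1}}$.

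For this last equality, I would use that the $(d+1)$-fold iterated multiplicative derivative factors pointwise across products: $\Delta_{y_1}\cdots\Delta_{y_{d+1}}(fh)(x) = \Delta_{y_1}\cdots\Delta_{y_{d+1}} f(x) \cdot \Delta_{y_1}\cdots\Delta_{y_{d+1}} h(x)$. Applying this to $g = f \cdot \overline{e(P)}$ and using the elementary identity $\Delta_{y_1}\cdots\Delta_{y_{d+1}}\overline{e(P)}(x) = \overline{e\bigl(D_{y_1}\cdots D_{y_{d+1}} P(x)\bigr)}$, the factor coming from $\overline{e(P)}$ collapses to $1$ everywhere, because $\deg(P) \leq d$ forces the additive iterated derivative $D_{y_1}\cdots D_{y_{d+1}} P$ to vanish identically. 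Taking expectations and absolute values then yields
\[
\|g\|_{U^{d+1}}^{2^{d+1}} \;=\; \Bigl|\E_{x,y_1,\dots,y_{d+1}}\,\Delta_{y_1}\cdots\Delta_{y_{d+1}} f(x)\Bigr| \;=\; \|f\|_{U^{d+1}}^{2^{d+1}},
\]
as desired.

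The main (essentially only) technical ingredient is the monotonicity of Gowers norms, which is itself a one-line Cauchy--Schwarz application applied inductively; if one prefers to avoid invoking it as a black box, one can instead apply Cauchy--Schwarz directly to $|\langle f, e(P)\rangle|^{2^{d+1}}$ exactly $d+1$ times, introducing one new shift $y_i$ per step, and finish via the same polynomial identity after the last unfolding. No serious obstacle arises; the whole argument is essentially bookkeeping around the defining property $D_{y_1}\cdots D_{y_{d+1}} P \equiv 0$ combined with the multiplicativity of iterated multiplicative derivatives.
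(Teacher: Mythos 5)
Your argument is correct, and it is the standard (essentially the only) proof of this fact; the paper itself states the direct theorem as background without supplying a proof, since it is a routine consequence of the Gowers norm definition and its basic properties. The three ingredients you isolate are exactly the right ones: $\|g\|_{U^1}=|\E g|$, the Gowers norm monotonicity $\|g\|_{U^1}\le\|g\|_{U^{d+1}}$ (an iterated Cauchy--Schwarz), and the pointwise multiplicativity of iterated multiplicative derivatives, which together with $D_{y_1}\cdots D_{y_{d+1}}P\equiv 0$ make $\Delta_{y_1}\cdots\Delta_{y_{d+1}}\overline{e(P)}\equiv 1$ and hence $\|g\|_{U^{d+1}}=\|f\|_{U^{d+1}}$. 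One remark, purely for rigor: the paper's Definition~\ref{gowers} writes the $U^{d+1}$ average with an absolute value; when you equate $\|g\|_{U^{d+1}}^{2^{d+1}}$ and $\|f\|_{U^{d+1}}^{2^{d+1}}$ via the pointwise identity on the integrands, the two averages are in fact equal as complex numbers before any absolute value is taken, so the step is fine -- but it is worth noting that one does not even need to invoke the (true) fact that these averages are nonnegative reals. The direct Cauchy--Schwarz alternative you sketch at the end is the same argument with the monotonicity lemma unwound; either presentation is acceptable.
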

%
%
\ignore{It was proved independently by Lovett, Meshulam and Samorodnitsky~\cite{MR2862496} and Green and Tao~\cite{MR2592422} that the inverse direction (previously known as ``Inverse conjecture for Gowers norms'') is false, when we restrict ourselves to classical polynomials. However, Tao and Ziegler~\cite{MR2948765} proved that it is true if we include nonclassical polynomials (\cref{poly}).} 
\begin{theorem}[Inverse theorem for Gowers Norms~\cite{MR2948765}]\label{inverse}
Fix $d \ge 1$ an integer and $\eps > 0$.
There exists an $\delta = \delta(\eps,d, |\F|)$ such that the following
holds. For every function $f: \F^n \to \D$ with $\|f\|_{U^{d+1}} \geq \eps$, there exists a polynomial $P\in \poly_{\leq d}(\F^n\rightarrow \T)$ that is $\delta$-correlated with $f$, that is
$$
|\ip{f, e(P)}| \ge \delta.
$$
\end{theorem}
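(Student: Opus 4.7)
The plan is to prove this by induction on the degree $d$, following the Cauchy--Schwarz / derivative unwinding philosophy that underlies all inverse theorems for Gowers norms. For the base case $d=1$, one has the identity $\|f\|_{U^2}^4 = \sum_{\xi} |\widehat{f}(\xi)|^4$, so a lower bound $\|f\|_{U^2}\ge \eps$ forces some Fourier coefficient $|\widehat f(\xi)|\ge \eps^2$, and $\xi$ corresponds to a classical linear polynomial $P(x)=\langle \xi,x\rangle/q$. This gives the degree-$1$ inverse statement with $\delta = \eps^2$.

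For the inductive step, assume the result for degree $d-1$ and suppose $\|f\|_{U^{d+1}}\ge \eps$. By the recursive definition
\[
\|f\|_{U^{d+1}}^{2^{d+1}} = \E_y \|\Delta_y f\|_{U^d}^{2^d},
\]
for a set of directions $y$ of density $\gg \eps^{O(1)}$ the multiplicative derivative $\Delta_y f$ has $U^d$-norm at least $\eps^{O(1)}$. By induction, for each such $y$ there is a degree-$(d-1)$ nonclassical polynomial $P_y : \F^n\to \T$ with $|\langle \Delta_y f,\; e(P_y)\rangle|\ge \delta'$ for some $\delta'=\delta'(\eps,d,|\F|)$. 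I would then try to show that the family $\{P_y\}$ behaves like a \emph{cocycle} in $y$: roughly $P_{y_1+y_2}(x)\approx P_{y_1}(x+y_2)+P_{y_2}(x)$ on a large set, and symmetrically $P_y(x+h)-P_y(x)\approx P_h(x+y)-P_h(x)$. Extracting such approximate additivity out of the bulk correlation is a standard Cauchy--Schwarz + pigeonhole manoeuvre (doubling many times and peeling off the derivatives) and produces a robust additive structure on $y \mapsto P_y$.

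The heart of the proof is then the \emph{integration step}: upgrade the approximate cocycle $\{P_y\}$ to a single nonclassical polynomial $P:\F^n\to\T$ of degree $\le d$ such that, on a positive-density set of $y$, $D_y P$ agrees with $P_y$ up to a lower-order polynomial. Here is where I expect the main obstacle to lie, and it is precisely the place where the class of polynomials must be widened from classical to nonclassical (see \cref{poly,struct}): in small characteristic, a genuine cocycle of classical degree-$(d-1)$ polynomials need not integrate to a classical degree-$d$ polynomial, but will integrate to a nonclassical one taking values in $\U_{k+1}=\frac{1}{q^{k+1}}\Z/\Z$ for some depth $k$ bounded in terms of $d$ and $q$. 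This is the Tao--Ziegler refinement that rescued the inverse conjecture after the Lovett--Meshulam--Samorodnitsky and Green--Tao counterexamples. Technically one realises $P$ as a suitable symmetric multilinear ``anti-derivative'' of the data $\{P_y\}$, with the non-integer shifts/depths absorbing the obstruction to integrating in $\F$.

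Once such $P$ is produced, a final Cauchy--Schwarz argument backs out a correlation between $f$ and $e(P)$ from the correlations between $\Delta_y f$ and $e(D_y P)$: expanding $|\langle f, e(P)\rangle|^{2}$ as $\E_y \langle \Delta_y f, e(D_y P)\rangle$ and feeding in the bulk lower bound yields $|\langle f,e(P)\rangle|\ge \delta(\eps,d,|\F|)$, with an explicit (though tower-type) dependence of $\delta$ on $\eps$. The bookkeeping on the depths of the intermediate nonclassical polynomials, and the tracking of the density of ``good'' directions through each Cauchy--Schwarz, are the technical nuisances, but the conceptual difficulty is concentrated in the integration step, which is why the whole framework of \cref{struct} was introduced.
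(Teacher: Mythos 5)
This theorem is imported from Tao--Ziegler~\cite{MR2948765} as a black box; the paper does not supply (and does not claim to supply) a proof of it, so there is no in-paper argument to compare your sketch against. Your base case $d=1$ via $\|f\|_{U^2}^4=\sum_{\xi}|\widehat f(\xi)|^4$ is correct, and the inductive skeleton (derive, correlate each $\Delta_y f$ with some $P_y$ of degree $\le d-1$, show $\{P_y\}$ is approximately a cocycle, integrate, back out a correlation for $f$) is the right heuristic. But there is a genuine and very large gap exactly at the step you yourself flag as the heart: you describe the passage from the approximate cocycle $\{P_y\}$ to a single degree-$\le d$ nonclassical polynomial $P$ as if one could simply take a symmetric multilinear anti-derivative, with nonclassical shifts and depths ``absorbing the obstruction.'' In small characteristic this fails naively, because one cannot divide by $d!$ to symmetrize; more fundamentally, the cocycle relation holds only on a positive-density (not full) set of pairs, only up to lower-order corrections, and those corrections are themselves of the same degree-$(d-1)$ complexity as the $P_y$'s, so the naive integration has no place to get started. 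Rigorously straightening such an approximate cocycle into a genuine nonclassical polynomial --- classifying the torsion obstructions, controlling depths, and upgrading ``agrees up to lower order on a dense set'' to ``equals a polynomial'' --- is essentially the entire content of the Tao--Ziegler paper, and none of the machinery they develop for it appears in your outline. The phrase ``a standard Cauchy--Schwarz + pigeonhole manoeuvre'' also understates the cost: even formulating the approximate cocycle identity cleanly requires the full Gowers--Cauchy--Schwarz symmetry argument, and in low characteristic the statement it delivers is weaker than what the integration step needs.

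A further caveat: the route actually taken in~\cite{MR2948765} is not this finitary cocycle integration. Following Bergelson--Tao--Ziegler, Tao and Ziegler pass through a Furstenberg-type correspondence to a structure theorem for measure-preserving $\F^{\omega}$-systems, and carry out the straightening in the ergodic setting where the relevant limits exist; the finitary theory of nonclassical polynomials and polynomial factors is then used to describe the characteristic factors. So your sketch describes a different --- and, as written, incomplete --- strategy rather than a compressed version of the known proof; the ``technical nuisances'' you defer are, in that strategy, open mathematical content rather than bookkeeping.
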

\ignore{%
It is easy to see that for every degree $d$ nonclassical polynomial $P$, $\|e(P)\|_{U^{d+1}} =
1$. \cref{thm:directgowers} and \cref{inverse} provide a robust version of this statement.}

\subsection{Rank, Regularity, and Other Notions of Uniformity}
The rank of a polynomial is a notion of its complexity according to lower degree polynomials.
\begin{definition}[Rank of a polynomial]\label{def:rankpoly}
Given a polynomial $P : \F^n \to \T$ and an integer $d \ge 1$, the {\em $d$-rank} of
$P$, denoted $\msf{rank}_d(P)$, is defined to be the smallest integer
$r$ such that there exist polynomials $Q_1,\dots,Q_r:\F^n \to \T$ of
degree $\leq d-1$ and a function $\Gamma: \T^r \to \T$ satisfying
$P(x) = \Gamma(Q_1(x),\dots, Q_r(x))$. If $d=1$, then
$1$-rank is defined to be $\infty$ if $P$ is non-constant and $0$
otherwise.

The {\em rank} of a polynomial $P: \F^n \to \T$ is its $\deg(P)$-rank. We say that $P$ is $r$-regular if $\msf{rank}(P) \ge r$.
\end{definition}
Note that for an integer $\lambda \in [1, q-1]$,  $\mrm{rank}(P) =
\mrm{rank}(\lambda P)$. In this article we are interested in obtaining a structure theorem for biased classical polynomials that does not involve nonclassical polynomials. Motivated by this, we define two other notions of rank. 
\begin{definition}[Classical rank of a polynomial]\label{def:classicalrank}
Given a (classical) polynomial $P:\F^n \rightarrow \F$ and an integer $d\geq 1$, the classical $d$-rank of $P$, denoted by $\crank_d(P)$, is defined similarly to \cref{def:rankpoly} with the extra restriction that $Q_1,...,Q_r:\F^n\rightarrow \F$ are classical polynomials. 

The {\em classical rank} of a polynomial $P:\F^n\to \F$ is its classical $\deg(P)$-rank. We say that $P$ is classical $r$-regular if $\msf{crank}(P) \ge r$.
\end{definition}

\begin{remark}
For a nonconstant affine-linear polynomial $P(x)$, $\rank(P)=\crank(P)=\infty$ and for a constant function $Q(x)$, $\rank(Q)=0$.  
\end{remark}

\begin{remark}\label{rem:rankcrank}
It is important to note that \cref{def:rankpoly} and \cref{def:classicalrank} are not equivalent. To see this, note that, as proved in \cite{MR2948765} and \cite{MR2862496}, the degree $4$ symmetric polynomial $S_4:=\sum_{i<j<k<\ell} x_ix_jx_kx_\ell$ has negligible correlation with any degree $\leq 3$ classical polynomial. A simple Fourier analytic argument implies that $\crank(S_4)=\omega(1)$, i.e. $\lim_{n\rightarrow \infty} \crank(S_4(x_1,...,x_n))=\infty$. However, $\|e(S_4)\|_{U^4}\approx \frac{1}{8}$, and by a theorem of Tao and Ziegler~\cite{MR2948765} stating that functions with large Gowers norm must have large rank, we have that $\rank(S_4) \leq r(\F)$ for some constant $r$. 
\end{remark}

In the above definitions of rank of a polynomial, we have allowed the function $\Gamma$ to be arbitrary. It is interesting to ask whether a polynomial is structured in a stronger sense.

\begin{definition}[Strong rank of a polynomial]\label{def:strongrank}
Given a (classical) polynomial $P:\F^n \rightarrow \F$ of degree $d$. The {\em strong rank} of $P$, denoted by $\nrank_d(P)$, is the smallest $r\geq 0$, such that there exist nonconstant polynomials $G_1,...,G_r, H_1,...,H_r:\F^n\rightarrow \F_n$ and a polynomial $Q$ such that
\begin{itemize}
\item $P(x)= \sum_{i=1}^r G_i H_i+Q$.
\item For all $i\in [r]$, we have that $\deg(G_i)+\deg(H_i)\leq d$.
\item $\deg(Q)\leq d-1$.
\end{itemize}
The {\em strong-rank} of a polynomial $P:\F^n\to \F$ is equal to $\nrank_{\deg(P)}(P)$. \end{definition}
The above notion of rank is a stronger notion, and in particular the following holds for any polynomial $P$,
\begin{equation}\label{eq:ranks}
\rank(P)\leq \crank(P)\leq \nrank(P).
\end{equation}
Due to the lack of multiplicative structure in $\frac{1}{p^k}\Z/\Z$ for $k>1$, it is not clear how to define a similar structural notion to strong rank for nonclassical polynomials.
Next, we will formalize the
notion of a generic collection of polynomials. Intuitively, it should
mean that there are no unexpected algebraic dependencies among the
polynomials. First, we need to set up some notation.

\begin{definition}[Factors] If $X$ is a finite set then by a \emph{factor} $\cB$ we simply mean a
partition of $X$ into finitely many pieces called \emph{atoms}.
\end{definition}

A finite collection of functions $\phi_1,\ldots,\phi_C$ from $X$ to some other space $Y$ naturally define a factor $\cB=\cB_{\phi_1,\ldots,\phi_C}$ whose atoms are sets of the form $\{x: (\phi_1(x),\ldots,\phi_C(x))= (y_1,\ldots,y_C) \}$ for some $(y_1,\ldots,y_C) \in Y^C$. By an abuse of notation
we also use $\cB$ to denote the map $x \mapsto (\phi_1(x),\ldots,\phi_C(x))$, thus also identifying the atom containing $x$ with
$(\phi_1(x),\ldots,\phi_C(x))$.

\begin{definition}[Polynomial factors]\label{factor}
If $P_1, \dots, P_C:\F^n \to \T$ is a sequence of polynomials, then the factor $\cB_{P_1,\ldots,P_C}$ is called a {\em polynomial factor}.

The {\em complexity} of $\cB$, denoted $|\cB|:=C$, is the number of defining polynomials. The {\em degree} of $\cB$ is the maximum degree among its defining polynomials $P_1,\ldots,P_C$. If $P_1,\ldots,P_C$ are of depths $k_1,\ldots,k_C$, respectively, then the number of atoms of $\cB$ is at most $\prod_{i=1}^C q^{k_i+1}$ which we denote by $\|\cB\|$.
\end{definition}

The notions of rank discussed above can now be extended to quantify the structural complexity of a collection of polynomials.

\begin{definition}[Rank, classical rank, and strong rank of a collection of polynomials]\label{def:rankfactor}
A polynomial factor $\cB$ defined by polynomials $P_1,\ldots,P_C:\F^n \rightarrow \T$ with respective depths $k_1,\ldots,k_C$ is said to have rank $r$ if $r$ is the least integer for which there exists $(\lambda_1, \ldots, \lambda_C)\in \Z^C$, with $(\lambda_1 \mod q^{k_1+1}, \ldots, \lambda_C \mod q^{k_C+1})\neq 0^C$, such that $\rank_d(\sum_{i=1}^C \lambda_iP_i) \leq r$, where $d=\max_i \deg(\lambda_i P_i)$.

Given a collection of polynomials $\cP$ and a function $r:\N\rightarrow \N$, we say that $\cP$ is $r$-regular if $\cP$ is of rank larger than $r(|\cP|)$. We extend \cref{def:classicalrank} and \cref{def:strongrank} to (classical) polynomial factors in a similar manner.
\end{definition}

Notice that by the definition of rank, for a degree-$d$ polynomial $P$ of depth $k$ we have
$$
\rank(\{P\})= \min\left\{ \rank_d(P),\rank_{d-(q-1)}(qP),\ldots, \rank_{d-k(q-1)}(q^kP)\right\},
$$
where $\{P\}$ is a polynomial factor consisting of one polynomial $P$.

In \cref{generic} we will see that regular collections of polynomials indeed do behave like a generic collection of polynomials in several manners. 
\ignore{
\subsection{Bias of a polynomial}\label{analyticuniformity}}
Green and Tao~\cite{MR2592422} and Kaufman and Lovett~\cite{KL08} proved the following relation between bias and rank of a polynomial.
\begin{theorem}[$d<|\F|$ \cite{MR2592422}, arbitrary $\F$ \cite{KL08}]\label{rankreg}
For any $\eps > 0$ and integer $d \ge 1$, there exists
$r = r(d,\eps, |\F|)$ such that the following is true.
If $P: \F^n \to \T$ is a degree-$d$ polynomial $\bias(P)\geq \eps$ then $\crank(P)\leq r$.

More importantly, there are $y_1,\ldots, y_r\in \F^n$, and a function $\Gamma:\F^r \rightarrow \F$, such that
$$
P= \Gamma(D_{y_1}P, \ldots, D_{y_r}P).
$$
\end{theorem}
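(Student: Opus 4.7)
The plan is to induct on the degree $d$. The base case $d=1$ is immediate: any nonconstant affine-linear polynomial $P$ satisfies $\E_x e(P(x)) = 0$, so $\bias(P) > 0$ forces $P$ to be constant and hence $\crank(P) = 0$. For the inductive step, the main analytic input is the identity
\begin{equation*}
\bias(P)^2 \;=\; \bigabs{\E_x e(P(x))}^2 \;=\; \E_{x,h}\, e\bigl(D_hP(x)\bigr) \;=\; \E_h\bigl[\E_x e(D_hP(x))\bigr],
\end{equation*}
which, via the triangle inequality, yields $\E_h \bias(D_h P) \geq \eps^2$. By Markov, a set of $h \in \F^n$ of density at least $\eps^2/2$ satisfies $\bias(D_hP) \geq \eps^2/2$. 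Since each such $D_hP$ has degree at most $d-1$, the inductive hypothesis applies, providing each with a bounded-complexity classical decomposition.

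From this, I would produce the desired decomposition of $P$ itself via an iterative energy-increment procedure. Starting with $Y_0 = \emptyset$, maintain at each stage the polynomial factor $\cB_i$ generated by $\{D_yP : y \in Y_i\}$. If $P$ is $\cB_i$-measurable, we terminate with $P = \Gamma(D_{y_1}P, \ldots, D_{y_i}P)$ and $\crank(P) \leq i$, which yields both the rank bound and the explicit form promised by the theorem. Otherwise, I would argue that there exists a new direction $y_{i+1} \in \F^n$ whose adjunction strictly increases the conditional energy $\norm{\E[e(P) \mid \cB_i]}_2^2$ by at least a fixed amount $\delta = \delta(\eps, d, |\F|) > 0$; since this energy is bounded above by $1$, the procedure halts after at most $1/\delta$ steps, giving the quantitative bound $r = r(\eps, d, |\F|)$.

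The heart of the proof, and its principal obstacle, is justifying the energy-increment step quantitatively: when $P$ is not $\cB_i$-measurable, a useful new direction must exist. The guiding heuristic is that on atoms of $\cB_i$ where $e(P)$ carries significant $L^2$ mass but is nonconstant, the averaging identity applied conditionally guarantees that a positive fraction of directions $h$ produce derivatives $D_hP$ with nontrivial conditional bias; the inductive hypothesis then furnishes lower-degree classical polynomials whose joint level sets refine the conditional structure of $P$, delivering the required energy gain. Making this precise demands the polynomial regularity machinery of Green--Tao and Kaufman--Lovett, and is especially delicate in the small-characteristic regime $d \geq |\F|$, where one must ensure that the final decomposition remains by classical polynomials even though intermediate polynomials encountered during the argument are naturally nonclassical.
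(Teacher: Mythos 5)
This theorem is cited in the paper, not proved there: it is attributed to Green--Tao~\cite{MR2592422} (for $d<|\F|$) and Kaufman--Lovett~\cite{KL08} (general $d$), and the only in-text commentary is a remark that the Kaufman--Lovett argument extends to nonclassical polynomials ``without modification.'' So there is no paper proof to line your sketch up against; I will instead assess whether the sketch would yield the theorem. The opening of your argument is sound and matches the standard route: the base case, the identity $\bias(P)^2 = \E_h\,\E_x\, e(D_hP(x))$, and the Markov step producing a density-$\eps^2/2$ set of directions $h$ with $\bias(D_hP)\geq\eps^2/2$ are exactly the move the paper itself makes in \cref{lem:biasedderivatives}.

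The gluing step is where your proposal has a genuine gap, and you correctly flag it as the ``principal obstacle.'' Your energy-increment scheme grows the factor $\cB_i$ only by polynomials of the special form $D_y P$, and asserts that whenever $P$ is not $\cB_i$-measurable there is a direction $y_{i+1}$ whose derivative raises $\|\E[e(P)\mid\cB_i]\|_2^2$ by a fixed $\delta(\eps,d,|\F|)$. Neither half of this assertion is established. The generic energy-increment lemma in the Green--Tao/Tao--Ziegler machinery produces an \emph{arbitrary} degree-$\leq d-1$ polynomial correlated with the defect $e(P)-\E[e(P)\mid\cB_i]$, not a derivative of $P$, and nothing in your sketch explains why the restricted family of derivatives suffices. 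Moreover the increment is not a fixed $\delta$ independent of the stage: atoms of $\cB_i$ are level sets of polynomials, not subspaces, so the ``averaging identity applied conditionally'' is not literally available, and any honest increment degrades with the complexity of the current factor unless one first regularizes (exactly the role of \cref{factorreg}). The Kaufman--Lovett proof does not take the energy-increment route at all; it obtains the explicit form $P=\Gamma(D_{y_1}P,\ldots,D_{y_r}P)$ by a determinacy argument --- roughly, showing that for a bounded random set of directions the tuple $(D_{y_i}P(x))_i$ determines $P(x)$ with high probability and then derandomizing --- using the inductive hypothesis on the biased derivatives together with restriction to subspaces to handle small characteristic. So while your setup points in the right direction, the central mechanism of the proof would need to be replaced by a substantively different argument.
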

Kaufman and Lovett originally proved \cref{rankreg} for classical polynomials and classical rank. However, their proof
extends to nonclassical polynomials without modification. Note that $r(d,\eps,|\F|)$ does not depend on the dimension $n$.
Motivated by \Cref{rankreg} we define unbiasedness for polynomial factors.
\begin{definition}[Unbiased collection of polynomials]\label{dfn:uniformfactor}
Let $\eps:\N\rightarrow \R^+$ be a decreasing function. A polynomial factor $\cB$ defined by polynomials $P_1,\ldots, P_C:\F^n \rightarrow \T$ with respective depths $k_1,\ldots, k_C$ is said to be $\eps$-unbiased if for every collection $(\lambda_1,\ldots, \lambda_C)\in \Z^C$, with $(\lambda_1 \mod p^{k_1+1}, \ldots, \lambda_C \mod p^{k^C+1})\neq 0^C$ it holds that
$$
\left|\Ex_x \left[ \expo{\sum_i \lambda_i P_i(x)}\right] \right|<\eps(|\cB|).
$$
\end{definition}
\ignore{Using Gowers norms, one can define the following analytic notion of uniformity for polynomials which is stronger than unbiasedness.
\begin{definition}[Uniformity]\label{def:uniformpoly}
Let $\eps>0$ be a real. A degree-$d$ polynomial $P:\F^n\rightarrow \T$ is said to be $\eps$-uniform if
$$
\norm{\expo{P}}_{U^d}<\eps.
$$
\end{definition}
Tao and Ziegler used \cref{rankreg} to show that high rank polynomials have small Gowers norm.
\begin{theorem}[Theorem 1.20 of \cite{MR2948765}]\label{thm:taoziegler}\label{arank}
For any $\eps>0$ and integer $d \ge 1$, there exists an integer $r(d,|\F|,\eps)$ such that the following is true. For any nonclassical polynomial $P:\F^n\rightarrow \T$ of degree $\leq d$, if $\norm{\expo{P}}_{U^d}\geq \eps$, then $\rank_d(P)\leq r$.
\end{theorem}
This immediately implies that a regular polynomial is also uniform.
\begin{corollary}\label{cor:rankvsuniformity}
Let $\eps, d,$ and $r(d,|\F|,\eps)$ be as in \Cref{thm:taoziegler}. Every $r$-regular polynomial $P$ of degree $d$ is also $\eps$-uniform.
\end{corollary}
}

\subsection{Regularization of Polynomials}\label{intro:regularization}
Due to the generic properties of regular factors, it is often useful to {\em refine} a collection of polynomial to a regular collection~\cite{MR2948765}. We will first formally define what we mean by refining a collection of polynomials.

\begin{definition}[Refinement] \label{refine}
A collection $\cP'$ of polynomials is called a {\em refinement} of $\cP=\{P_1,...,P_m\}$, and
denoted $\cB' \succeq \cB$, if the
induced partition by $\cB'$ is a combinatorial refinement of the partition
induced by $\cB$. In other words, if for every $x,y\in \F^n$,
$\cB'(x)=\cB'(y)$ implies $\cB(x)=\cB(y)$.
\end{definition}

One needs to be careful about distinguishing between two types of refinements.

\begin{definition}[Semantic and syntactic refinements] \label{semsynrefine}
$\cB'$ is called a {\em syntactic refinement} of $\cB$, and
denoted $\cB' \succeq_{syn} \cB$, if the sequence of polynomials
defining $\cB'$ extends that of $\cB$. It is called a {\em
  semantic refinement}, and denoted $\cB' \succeq_{sem} \cB$ if the
induced partition is a combinatorial refinement of the partition
induced by $\cB$. In other words, if for every $x,y\in \F^n$,
$\cB'(x)=\cB'(y)$ implies $\cB(x)=\cB(y)$.
\end{definition}
Clearly, being a syntactic refinement is stronger than being a semantic refinement.
Green and Tao~\cite{MR2592422}, showed that given any nondecreasing function $r:\N\to\N$, any classical polynomial factor can be refined to an $r$ classical-rank factor.
The basic idea is simple; if some polynomial has low rank, decompose it to a few lower degree polynomials, and repeat. Formally, it follows
by transfinite induction on the number of polynomials of each degree which defines the polynomial factor.
The bounds on the number of polynomials obtained in the regularization process have Ackermann-type dependence on the degree $d$, even when the regularity parameter $r(\cdot)$ is a ``reasonable" function. As such,
it gives nontrivial results only for constant degrees. The extension of this regularity lemma to nonclassical polynomials is more involved, and was proved by Tao and Ziegler~\cite{MR2948765} as part of their proof of the inverse Gowers theorem (\cref{inverse}).
\begin{theorem}[Regularity lemma for nonclassical polynomials~\cite{MR2948765}]\label{factorreg}
Let $r: \N \to \N$ be a non-decreasing function and $d \ge 1$
be an integer. Then, there is a  function
$C_{\F,r,d}: \N \to \N$  such
that the following holds. Suppose $\cB$ is a factor defined by
polynomials $P_1,\dots, P_C : \F^n \to \T$  of degree at most $d$.
Then, there is  an $r$-regular factor $\cB'$ consisting of  polynomials
$Q_1, \dots, Q_{C'}: \F^n \to \T$ of degree $\leq d$ such that $\cB'
\succeq_{sem} \cB$ and  $C' \leq  C_{\ref{factorreg}}^{(\F,r,d)}(C)$.

Moreover, if $\cB$ is itself a syntactic refinement of some $\cB_0$ that has rank $>r(C')$, then additionally $\cB'$ will be a syntactic refinement of $\cB_0$.
\end{theorem}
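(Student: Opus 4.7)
The plan is to prove the theorem by iterative refinement of $\cB$ until regularity is achieved, using a well-founded descent on a carefully chosen complexity measure of the factor. For a polynomial factor $\cB$ with defining polynomials of degrees $d_1,\ldots,d_C$ and depths $k_1,\ldots,k_C$, assign a complexity vector $v(\cB)$ that records, for each pair $(d',k')$ with $d' \le d$ and $k' \ge 0$, the number of defining polynomials of degree $d'$ and depth $k'$. Order these vectors lexicographically from the largest $(d',k')$ downward. This yields a well-order, so any strictly descending sequence of factors under this order must terminate.

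The basic refinement step works as follows. Given the current factor $\cB=\{P_1,\ldots,P_C\}$, test whether it has rank $\ge r(C)$. If yes, we are done. Otherwise, by \cref{def:rankfactor} there exist $(\lambda_1,\ldots,\lambda_C)\in\Z^C$ with $(\lambda_i \bmod q^{k_i+1})\ne 0$ for at least one $i$, such that $R:=\sum_i \lambda_i P_i$ satisfies $\rank_{d'}(R)\le r(C)$, where $d'=\max_i \deg(\lambda_i P_i)$. By the definition of rank, $R=\Gamma(Q_1,\ldots,Q_{r(C)})$ for some degree-$(d'-1)$ polynomials $Q_1,\ldots,Q_{r(C)}$ and some $\Gamma$. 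Among the indices $i$ for which $\lambda_i P_i$ attains the maximal pair $(\deg(\lambda_i P_i), k_i)$, fix one such index $i^\star$. The identity $\lambda_{i^\star} P_{i^\star} = \Gamma(Q_1,\ldots,Q_{r(C)}) - \sum_{i\ne i^\star}\lambda_i P_i$ lets us express $\lambda_{i^\star} P_{i^\star}$ as a function of $\{P_i\}_{i\ne i^\star}\cup\{Q_1,\ldots,Q_{r(C)}\}$. We define $\cB'$ by removing $P_{i^\star}$ and adjoining $Q_1,\ldots,Q_{r(C)}$ together with, if necessary, finitely many auxiliary low-degree polynomials needed to recover $P_{i^\star}$ itself (not merely $\lambda_{i^\star} P_{i^\star}$) on each atom. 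Then $\cB'\succeq_{sem}\cB$ by construction.

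The crucial point is that in passing from $\cB$ to $\cB'$, the count of polynomials at the pair $(d',k_{i^\star})$ strictly decreases, while only pairs $(d'',k'')<(d',k_{i^\star})$ may increase. Hence $v(\cB')<_{\mathrm{lex}} v(\cB)$, so the iteration terminates in finitely many steps. The bound $C_{\ref{factorreg}}^{(\F,r,d)}(C)$ arises by recursion over this well-order: each refinement step inflates the count by at most $r(\cdot)+O(1)$ new lower-degree polynomials, and these must themselves later be regularized, producing the expected Ackermann-type growth in $d$.

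For the moreover clause, suppose $\cB$ syntactically refines $\cB_0$ and that at every stage of the process the current factor size $C$ satisfies $\rank(\cB_0)>r(C)$ (which follows from the hypothesis once one checks $C\le C'$ throughout). Then in any dependency $\sum_i \lambda_i P_i = \Gamma(Q_1,\ldots,Q_{r(C)})$ produced by the algorithm, at least one nonzero $\lambda_i$ must correspond to a polynomial \emph{outside} $\cB_0$: otherwise the dependency would witness low rank of $\cB_0$, contradicting the assumption. Choosing $i^\star$ among such indices ensures that only polynomials outside $\cB_0$ are ever removed, so $\cB_0$ remains a syntactic part of every intermediate factor, and the final $\cB'$ is a syntactic refinement of $\cB_0$.

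The main obstacle I anticipate is the careful bookkeeping forced by nonclassical polynomials. When we solve for $\lambda_{i^\star} P_{i^\star}$, we only recover $P_{i^\star}$ modulo the kernel of multiplication by $\lambda_{i^\star}$ in $\U_{k_{i^\star}+1}$, which is a finite subgroup of $\T$. Restoring $P_{i^\star}$ itself requires augmenting $\cB'$ by a constant number of auxiliary polynomials of smaller depth (and hence strictly smaller position in the complexity vector), so that the semantic refinement property holds and the descent argument remains valid. Orchestrating this depth manipulation while maintaining both the well-founded descent and the syntactic-refinement clause is the delicate technical heart of the argument.
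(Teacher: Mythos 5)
The paper does not prove \cref{factorreg} — it imports it from Tao--Ziegler \cite{MR2948765}, and its only account of the method appears in passing in the proof of \cref{generaldfactorreg}: transfinite induction on the vector counting defining polynomials by $(\text{degree},\text{depth})$, replacing a polynomial occurring in a low-rank dependency by polynomials of lower degree or of the same degree and lower depth. Your proposal reproduces exactly this strategy — lexicographic descent on the degree--depth complexity vector, the extraction of the maximal term from a witnessing dependency, and the depth/kernel bookkeeping needed to recover $P_{i^\star}$ from $\lambda_{i^\star}P_{i^\star}$, which you correctly flag as the technical crux — so it is the same approach the paper relies on.
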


\subsection{\ignore{Non-prime or }Growing field size}\label{generalF}
Note that in all the results discussed in this section, we have assumed that the field $\F=\F_q$ is a prime field for a fixed prime $q$, and thus the parameters that depended on $|\F|$ could be thought of as constants.

Recently, Bhowmick and Lovett~\cite{BL15largefield} proved that the dependence in the field-size can be dropped in several of the tools from higher-order Fourier analysis, allowing to extend many of the discussed results to the scenario when $\F$ can be a field with size growing with $n$. 
 
The main theorem towards obtaining such improvements is the following improvement of \cref{rankreg}.
\begin{theorem}[\cite{BL15largefield}]\label{thm:BL15}
Let $|\F|=\F_{q}$, and $d,s\in \N$. Let $P:\F^n\to \F$ be a degree $d$ polynomial. Suppose that $\bias(P)\geq |\F|^{-s}$. Then, there exist $c=c(d,s)$, polynomials $Q_1,\ldots, Q_c$, and $\Gamma:\F^c\to \F$, such that $P=\Gamma(Q_1,\ldots, Q_c)$. 
\end{theorem}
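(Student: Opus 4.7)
The plan is to proceed by induction on the degree $d$, with \cref{rankreg} (Green--Tao/Kaufman--Lovett) providing the blueprint whose field-size dependence we must remove. The $d=1$ base case is immediate: a nonconstant linear polynomial has bias zero, so $\bias(P) \geq q^{-s}>0$ forces $P$ to be constant, and we may take $c=0$.

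For the inductive step, the starting point is the Cauchy--Schwarz identity $\bias(P)^2 = \Ex_{x,h \in \F^n}\, e_\F(D_h P(x)) \leq \Ex_h \bias(D_h P)$, which produces a set of directions $h \in \F^n$ of density $\geq q^{-2s}/2$ on which $\bias(D_h P) \geq q^{-2s}/2$. Since each such $D_h P$ has degree $\leq d-1$, the inductive hypothesis decomposes it as $\Gamma_h(Q^h_1, \ldots, Q^h_{c'})$ with $c' = c(d-1, 2s+1)$ independent of $n$ and $|\F|$. The naive way to combine these local decompositions into a global one for $P$ is via \cref{rankreg}, which writes $P = \Gamma(D_{h_1}P,\ldots,D_{h_r}P)$; but the count $r$ depends on $|\F|$, which is precisely the dependence we must eliminate.

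I expect this elimination to be the main obstacle. The plan is to pass from combinatorial to analytic rank: define $\mrm{arank}(P) := -\log_q \|e_\F(P)\|_{U^d}^{2^d}$, so that by monotonicity of Gowers norms, $\bias(P) = \|e_\F(P)\|_{U^1} \leq \|e_\F(P)\|_{U^d}$ gives $\mrm{arank}(P) \leq 2^d s$. One then iteratively pulls out a single correlating classical polynomial of degree $<d$: at each stage, if the current analytic rank is $\leq t$, an inverse-Gowers-type argument produces a classical $Q$ of degree $<d$ with $|\langle e_\F(P), e_\F(Q)\rangle|$ bounded below by a function of $t$ and $d$ only, and replacing $P$ by $P - \lambda Q$ for an appropriate $\lambda \in \F$ strictly reduces the analytic rank. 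The crux of the new argument over \cite{MR2592422,KL08}, and where the $|\F|$-independence is achieved, is to replace the black-box inverse Gowers theorem (\cref{inverse}), whose parameters depend on $|\F|$, with a finer algebraic analysis: a Schwartz--Zippel-type dimension count in an auxiliary variety associated to the dichotomy ``small $U^d$-norm vs.\ high classical rank'' yields error terms of order $|\F|^{-1}$ that produce only an $O_d(1)$ multiplicative cost per extraction step, uniformly in $q$. After $O_d(s)$ extraction rounds the analytic rank is driven to zero, and the accumulated $Q_i$'s together with the residual $\Gamma$ yield the claimed decomposition with $c=c(d,s)$, completing the induction.
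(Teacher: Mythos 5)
This statement is Theorem~\ref{thm:BL15}, which the paper cites as a black box from Bhowmick and Lovett~\cite{BL15largefield} and does not prove. So strictly there is no ``paper's own proof'' to compare against; the relevant comparison is with the argument in the cited source.

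Your setup is reasonable and in fact matches the high-level strategy that Bhowmick--Lovett also follow: induct on degree, use Cauchy--Schwarz to show a positive-density set of directions $h$ has $\bias(D_h P)$ not too small, apply the inductive hypothesis to each such derivative, and then combine. The base case and the Cauchy--Schwarz computation are fine. But there is a genuine gap at exactly the place you yourself flag as ``the main obstacle.'' Your plan to remove the $|\F|$-dependence consists of (a) an iterative extraction scheme against the analytic rank $\mrm{arank}(P) = -\log_q \|e_\F(P)\|_{U^d}^{2^d}$, and (b) replacing the inverse Gowers theorem by ``a Schwartz--Zippel-type dimension count in an auxiliary variety.'' Item (b) is not an argument; it is a name for the theorem you would need to prove, and proving an inverse-type dichotomy (large $U^d$-norm versus low classical rank) with parameters uniform in $q$ is precisely the technical content of Bhowmick--Lovett. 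Asserting that the error terms come out to be $O(|\F|^{-1})$ and that each extraction costs only $O_d(1)$ is the entire theorem restated, not a proof of it. Item (a) also has mechanical problems: subtracting $\lambda Q$ with $\deg Q < d$ from $P$ leaves the degree-$d$ part of $P$ untouched, and it is not clear in what sense $\|e_\F(P - \lambda Q)\|_{U^d}$ is forced to drop by a uniform amount per step; the usual energy-increment argument works at the level of $U^{d+1}$ (where $\|e_\F(P)\|_{U^{d+1}} = 1$ trivially for a degree-$d$ polynomial, so nothing is gained) and, more to the point, it invokes the inverse theorem whose field-dependent parameters are exactly what you are trying to avoid.

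For context, the actual Bhowmick--Lovett proof does not go through Gowers norms or analytic rank at all; it stays on the combinatorial side, using a Bogolyubov--Chang-type sumset argument (the same ingredient the present paper uses as \cref{chang}) to locate a large subspace of directions on which every derivative has bounded classical rank, and then a careful new lemma to deduce that $P$ itself has bounded rank on (a shift of) that subspace, tracking the parameters explicitly to see they never pick up an $|\F|$ factor. That new lemma is the contribution, and your sketch does not supply a substitute for it.
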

Note that $c=c(d,s)$ does not depend on $|\F|$, and remains a constant even when the field size grows with $n$. \ignore{ In \cite{BL15largefield}, the above theorem is further extended to non-prime fields $\F=\F_{q^m}$ of size possibly growing with $n$, under the assumption that $q>d$.} We list below the immediate implications to the results discussed in this section.
\begin{enumerate}
\item The dependence of $r_{\ref{rankreg}}(d,\eps, |\F|)$ on $|\F|$ in \cref{rankreg}\ignore{and \cref{thm:taoziegler}} can be removed.\ignore{ when $\F=\F_q$ is a prime field, or $\F=\F_{q^m}$ for a prime $q>d$.}
\ignore{\item The dependence of $r(d,|\F|, \epsilon)$ on $|\F|$ in 
\cref{cor:rankvsuniformity}  can be removed.  }
\item The dependence of $C_{\ref{factorreg}}^{\F,r,d}$ in \cref{factorreg} can be removed. \ignore{when $\F=\F_q$ is a prime field, or $\F=\F_{q^m}$ for a prime $q>d$.}
\item The dependence of $r_{d,|\F|}$ in $|\F|$ in \cref{lem:degreepreserve}  can be removed.\ignore{ when $\F=\F_q$ is a prime field, or $\F=\F_{q^m}$ for a prime $q>d$.}
\end{enumerate}

\section{Properties of $\rank$, $\crank$, and $\nrank$}\label{generic}
A high-rank polynomial of degree $d$ is, intuitively, a ``generic''
degree $d$ polynomial; there are no unexpected ways to decompose it
into lower degree polynomials. In this section we make precise this intuition.

Using a standard observation that relates the bias of a function to
its distribution on its range, \cref{rankreg} implies that high-rank polynomials behave  like independent random variables. See \cite{BFHHL13,hatami2014general} for further discussion of stronger equidistribution properties of high-rank polynomials.
%
%
%
Another way that high-rank polynomials behave like generic polynomials is that their restriction to subspaces preserves degree and high rank. We refer to \cite{BFHHL13} for a proof.
\begin{lemma}[Degree and rank preservation]\label{rankrestrict}
Suppose $f: \F^n \to \T$ is a polynomial of degree $d$ and rank
$\geq r$, where $r > q+1$. Let $A$ be a hyperplane in $\F^n$. Then,
$f\vert_A$ is a polynomial of degree $d$ and rank $\geq r-|\F|$, unless $d=1$
and $f$ is constant on $A$.
\end{lemma}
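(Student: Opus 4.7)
The plan is to prove the contrapositive: show that if either $f|_A$ drops in degree or satisfies $\rank(f|_A) < r - |\F|$, then $f$ itself decomposes using fewer than $r$ lower-degree polynomials, contradicting $\rank(f) \geq r$. The construction reconstructs $f$ on all of $\F^n$ from a rank decomposition of $f|_A$, enlarged by $O(|\F|)$ extra polynomials built from a single directional derivative normal to $A$. Concretely, write $A = \{x \in \F^n : \ell(x) = a\}$ for a nonzero linear form $\ell$ and scalar $a$, pick $y \in \F^n$ with $\ell(y) = 1$, and set $t(x) := \ell(x) - a \in \F$ and $v(x) := x - t(x)\, y$, so that every $x \in \F^n$ decomposes uniquely as $x = v(x) + t(x)\, y$ with $v(x) \in A$. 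Telescoping the one-step identity $f(v + (s+1)y) - f(v + sy) = (D_y f)(v + sy)$ over $s$ gives
\[
f(x) = f\bigl(v(x)\bigr) + \sum_{s=0}^{t(x)-1} (D_y f)\bigl(v(x) + sy\bigr),
\]
where $t(x)$ is viewed in $\{0,1,\ldots,q-1\}$ and the sum is interpreted as empty when $t(x) = 0$.

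Assume first that $d \geq 2$. If $\deg(f|_A) = d$ and $f|_A = \Gamma(R_1, \ldots, R_m)$ realizes $m = \rank(f|_A)$ with each $R_i : A \to \T$ of degree $\leq d-1$, then pulling back via the affine-linear map $v$ produces $\tilde R_i(x) := R_i(v(x))$ of degree $\leq d-1$ on $\F^n$, and the polynomials $H_k(x) := (D_y f)(v(x) + ky)$ for $k = 0, \ldots, q-2$ also have degree $\leq d-1$. Since $t(x)$ depends only on $\ell(x)$, the displayed identity writes $f(x)$ as a function of the $m + q$ polynomials $(\tilde R_1, \ldots, \tilde R_m, H_0, \ldots, H_{q-2}, \ell)$, each of degree $\leq d-1$. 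Hence $\rank(f) \leq \rank(f|_A) + |\F|$, which combined with $\rank(f) \geq r$ yields $\rank(f|_A) \geq r - |\F|$. If instead $\deg(f|_A) < d$, then applying the same formula with $f|_A$ itself as a single degree-$(\leq d-1)$ polynomial gives $\rank(f) \leq 1 + q < r$, a contradiction that forces $\deg(f|_A) = d$.

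The main obstacle is the corner case $d = 1$, where the argument collapses because $\ell$ has degree $1 > d-1 = 0$ and can no longer be used in a decomposition. Here $f$ is affine-linear and non-constant (since $\rank(f) = \infty > r$ by the convention for degree-$1$ polynomials), and $f|_A$ is either non-constant affine-linear (in which case $\deg(f|_A) = d$ and $\rank(f|_A) = \infty \geq r - |\F|$) or constant, which is precisely the exception stated in the lemma. The remaining verifications --- that pullback by an affine-linear map and the discrete derivative $D_y$ preserve degree in the sense of \cref{poly} --- are routine and hold for nonclassical polynomials verbatim.
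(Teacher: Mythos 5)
The paper does not give its own proof of this lemma; it simply defers to \cite{BFHHL13}. Your telescoping-derivative argument is correct and is the standard route: decomposing $x=v(x)+t(x)y$ with $v(x)\in A$, pulling back a rank decomposition $f\vert_A=\Gamma(R_1,\dots,R_m)$ via $v$, and using $\ell$ together with the $q-1$ shifted normal derivatives $H_k=(D_yf)(v(\cdot)+ky)$ gives exactly $\rank(f)\le\rank(f\vert_A)+|\F|$ (hence the stated bound $r-|\F|$), the same formula with $f\vert_A$ as a single degree-$\le d-1$ polynomial rules out a degree drop when $d\ge 2$ since $r>q+1$, and your treatment of the $d=1$ exceptional case is also right.
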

Bhowmick and Lovett~\cite{BL15largefield} showed that this can be improved in the case when $|\F|>d$. The next lemma combines this observation with the above lemma.
\begin{lemma}\label{rankrestrict}
Let $f\in \cP_{d}(\F^n\rightarrow \T)$ such that $\rank(f)\geq r$. Let $H$ be a hyperplane in $\F^n$. Then the restriction of $f$ to $H$ has rank at least $\max\{r-d-1, r-|\F|-1\}$.
\end{lemma}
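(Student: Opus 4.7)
The plan is to show that restricting $f$ to a hyperplane drops the rank by at most $\min\{d+1, |\F|+1\}$, by exhibiting $f$ as a function of the rank decomposition of $f|_H$ together with a small number of additional polynomials of degree $\le d-1$. I will derive the two bounds from (i) a telescoping expansion using discrete derivatives along a transversal direction, valid in arbitrary characteristic and costing $|\F|+1$ extras; and (ii) a Taylor-type polynomial expansion in the transversal parameter, valid when $d < |\F|$ and costing only $d+1$ extras.

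To set up, since translation preserves rank, I may assume $H = \{x : \ell(x) = 0\}$ for a nonzero linear form $\ell$; pick $v \in \F^n$ with $\ell(v) = 1$, so every $x$ decomposes uniquely as $x = x' + \ell(x) v$ with $x' := x - \ell(x) v \in H$. Given a rank decomposition $f|_H = \Gamma(Q_1, \ldots, Q_s)$ with $s = \rank(f|_H)$ and $\deg Q_i \le d - 1$, I extend each $Q_i$ to $\tilde Q_i(x) := Q_i(x - \ell(x) v)$. Composition with the affine-linear projection $x \mapsto x - \ell(x) v$ preserves degree (even for nonclassical polynomials, as one sees by iterating the discrete-derivative definition), so $\deg \tilde Q_i \le d - 1$ and $\tilde Q_i|_H = Q_i$. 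In particular, the value $f|_H(x') = \Gamma(\tilde Q_1(x), \ldots, \tilde Q_s(x))$ is accessible as a function of $s$ polynomials of degree $\le d-1$ on all of $\F^n$.

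For the $|\F|+1$ bound, apply the telescoping identity $f(x' + tv) = f(x') + \sum_{i=0}^{t-1} D_v f(x' + iv)$, valid for $t \in \{0,\ldots,|\F|-1\}$. The $|\F|$ shifts $B_j(x) := D_v f(x + jv)$ are polynomials on $\F^n$ of degree $\le d - 1$, and the telescoping sum at $x$ is determined by $\ell(x)$ together with $B_0(x), \ldots, B_{|\F|-1}(x)$. Thus $f$ is a function of the $s + |\F| + 1$ polynomials $\tilde Q_1, \ldots, \tilde Q_s, \ell, B_0, \ldots, B_{|\F|-1}$, each of degree $\le d - 1$, giving $\rank(f) \le s + |\F| + 1$.

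For the $d+1$ bound, when $d < |\F|$ the depth constraint $1 + k(|\F|-1) \le d$ from \cref{struct} forces $k=0$, so every nonclassical polynomial of degree $d$ is in fact classical. Then $f(x' + tv)$ is a genuine polynomial of degree $\le d$ in $t$, admitting the Taylor expansion $f(x' + tv) = \sum_{j=0}^{d} g_j(x') t^j$ with $g_j$ a polynomial on $H$ of degree $\le d - j$ and $g_0 = f|_H$. Substituting $t = \ell(x)$ and $x' = x - \ell(x) v$ yields
\[
f(x) = f|_H(x - \ell(x) v) + \sum_{j=1}^{d} g_j(x - \ell(x) v) \cdot \ell(x)^j,
\]
so $f$ is a function of the $s + d + 1$ polynomials $\tilde Q_1, \ldots, \tilde Q_s, \ell, \tilde g_1, \ldots, \tilde g_d$, where $\tilde g_j(x) := g_j(x - \ell(x) v)$ has degree $\le d - j \le d - 1$. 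Combining the two bounds gives $\rank(f) \le \rank(f|_H) + \min\{d+1, |\F|+1\}$, equivalently $\rank(f|_H) \ge \max\{r - d - 1, r - |\F| - 1\}$. The main technical care is in tracking that each auxiliary polynomial really has degree $\le d - 1$; the hypothesis $d < |\F|$ enters precisely to guarantee, via \cref{struct}, that the Taylor expansion in $t$ truncates as a classical polynomial of degree $\le d$ rather than picking up nonclassical contributions along the transversal line.
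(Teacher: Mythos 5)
Your proof is correct. The paper itself does not prove this lemma: it states the weaker bound $r-|\F|$ as Lemma~4.2 citing \cite{BFHHL13}, and then asserts the combined $\max\{r-d-1,r-|\F|-1\}$ bound by also invoking the observation of Bhowmick--Lovett~\cite{BL15largefield} that restriction loses only $O(d)$ rank when $|\F|>d$. Your argument supplies an actual proof of both branches, and both the telescoping-derivative argument (for the $|\F|+1$ bound) and the single-variable Taylor expansion along the transversal line (for the $d+1$ bound, after reducing to classical polynomials via the depth constraint $k\leq(d-1)/(q-1)<1$) are the natural ones, presumably the same as in the cited sources.

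Two minor remarks. First, your telescoping decomposition never actually uses $B_0$: for $\ell(x)=t\in\{0,\ldots,q-1\}$ the sum is $\sum_{j=q-t}^{q-1}B_j(x)$, so the indices $j$ range over $\{1,\ldots,q-1\}$ only; dropping $B_0$ recovers the sharper $r-|\F|$ bound of \cite{BFHHL13}, though your $r-|\F|-1$ is all that is claimed. Second, both branches implicitly assume $d\geq 2$ so that $\ell$ may appear as a degree-$\leq d-1$ constituent of the decomposition; the degenerate case $d=1$ is excluded in the source lemma (``unless $d=1$ and $f$ is constant on $A$''), and it is worth saying so since your proof as written silently breaks there.
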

The following is a surprising and very useful property of high-rank polynomials that was proved by Bhattacharyya, et. al.~\cite{BFHHL13}.
\begin{lemma}[Degree preservation, Lemma 2.13 of \cite{BFHHL13}]\label{lem:degreepreserve}
Let $d>0$ be given. There exists a nondecreasing function $r_{d,\F}:\N\rightarrow \N$ such that the following holds. Let $\cB$ be a rank $\geq r_{d,\F}$ polynomial factor defined by degree $\leq d$ (nonclassical) polynomials $P_1,...,P_m:\F^n \rightarrow \T$. Let $\Gamma:\T^n\rightarrow \T$.  Then
$$
\deg(\Gamma(Q_1(x),...,Q_m(x))) \leq \deg(\Gamma(P_1(x),...,P_m(x))),
 $$ 
 for every collection of polynomial $Q_1,...,Q_m:\F^n \rightarrow \T$, with $\deg(Q_i)\leq \deg(P_i)$ and $\depth(Q_i)\leq \depth(P_i)$.
\end{lemma}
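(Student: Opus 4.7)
My plan is to use the characterization of degree via iterated additive derivatives, together with the near-equidistribution of high-rank polynomial factors under shifts, to argue that the identity $\deg(\Gamma(P_1,\ldots,P_m)) \le d'$ is in fact a \emph{formal} identity dictated purely by the degrees and depths of the $P_j$. This formality will then automatically transfer to the $Q_j$'s, since by hypothesis they satisfy the same degree and depth bounds.

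Set $d' := \deg(\Gamma(P_1,\ldots,P_m))$. First I would rewrite this condition as
\[
\sum_{S \subseteq [d'+1]} (-1)^{d'+1-|S|}\, \Gamma\!\left(P_1(x + y_S), \ldots, P_m(x + y_S)\right) = 0 \qquad \forall\, x,\,y_1,\ldots,y_{d'+1}\in\F^n,
\]
where $y_S := \sum_{i\in S} y_i$. Next, for each fixed choice of shifts, I would form the polynomial factor on $\F^n$ generated by the expanded family $\{P_j(\,\cdot + y_S) : j \in [m],\, S \subseteq [d'+1]\}$, whose complexity is at most $2^{d'+1} m$. Choosing $r_{d,\F}(m)$ large enough that, even after this shift-expansion, the new factor has rank above the threshold of \cref{rankreg}, I would invoke the equidistribution consequence of \cref{rankreg} (cf.\ the discussion in \cref{generic}) to conclude that the joint distribution of $\bigl(P_j(x+y_S)\bigr)_{j,S}$, as $x$ ranges over $\F^n$, is close to the uniform distribution on a coset of a subgroup $\Lambda \le \prod_{j,S} \U_{k_j+1}$.

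The crucial observation is that $\Lambda$ is \emph{intrinsic}: it is exactly the subgroup cut out by the linear relations enforced by $\deg(P_j)$ and $\depth(P_j)$, namely relations of the form $\sum_{T \subseteq [\deg(P_j)+1]} (-1)^{|T|} a_{j,\,\cdot\,\triangle T} = 0$ together with the containments $a_{j,S}\in \U_{k_j+1}$. Because the displayed identity must hold for every $x$, near-equidistribution forces the purely combinatorial identity $\sum_S (-1)^{d'+1-|S|}\Gamma(a_{1,S},\ldots,a_{m,S}) = 0$ for every $(a_{j,S}) \in \Lambda$. Now running the argument in reverse with the $Q_j$'s in place of the $P_j$'s: since $\deg(Q_j)\le \deg(P_j)$ and $\depth(Q_j)\le \depth(P_j)$, each tuple $\bigl(Q_j(x+y_S)\bigr)_{j,S}$ satisfies \emph{at least} the relations defining $\Lambda$, so it lies in $\Lambda$. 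The combinatorial identity then yields $\sum_S (-1)^{d'+1-|S|}\Gamma(Q_1(x+y_S),\ldots,Q_m(x+y_S)) = 0$ for all $x, y_1,\ldots,y_{d'+1}$, proving $\deg(\Gamma(Q_1,\ldots,Q_m)) \le d'$.

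The main obstacle is the passage from qualitative high-rank $\Rightarrow$ near-equidistribution to an \emph{exact} combinatorial identity on $\Lambda$, and the correct identification of $\Lambda$ as the maximal generic support. Quantitatively, one must take $r_{d,\F}(m)$ large relative to the inverse of the permissible equidistribution error (which is of order $|\F|^{-O_d(m)}$ once one expands in Fourier on $\prod_{j,S}\U_{k_j+1}$), so that every nonzero Fourier coefficient of the constraint sits outside the annihilator of $\Lambda$ and must therefore vanish exactly. Once this quantitative bookkeeping is in place, the transfer to the $Q_j$'s is immediate, and monotonicity of the threshold in $m$ gives the required nondecreasing function $r_{d,\F}$.
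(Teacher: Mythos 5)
The paper does not give its own proof of \cref{lem:degreepreserve}; it cites it directly from~\cite{BFHHL13}. Your overall strategy — reduce $\deg(\Gamma(P_1,\ldots,P_m))\le d'$ to a formal combinatorial identity on the constraint subgroup $\Lambda$ cut out by the degree and depth relations, derive that identity from equidistribution on $\Lambda$, and then transfer it to the $Q_j$'s because their tuples satisfy at least the same relations — is the right idea and is essentially the proof in~\cite{BFHHL13}.

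There is, however, a genuine gap in the equidistribution step as you have written it. You propose, for \emph{fixed} shifts $y_1,\ldots,y_{d'+1}$, to treat $\{P_j(\cdot+y_S)\}_{j,S}$ as a polynomial factor on $\F^n$ and to ``choose $r_{d,\F}(m)$ large enough that the new factor has rank above the threshold of \cref{rankreg}.'' This cannot be arranged: in the sense of \cref{def:rankfactor}, this expanded factor has rank $0$ for \emph{every} choice of $r_{d,\F}$, because the degree constraints themselves are nontrivial $\Z$-linear combinations of the $P_j(\cdot+y_S)$ that vanish identically (e.g.\ $\sum_{T\subseteq[\deg P_j+1]}(-1)^{|T|}P_j(\cdot+y_T)=0$). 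So \cref{rankreg} applied directly to the shifted factor tells you nothing. What you actually need is a \emph{joint} equidistribution-over-parallelepipeds (equivalently, over a system of linear forms in $(x,y_1,\ldots,y_{d'+1})\in\F^{n(d'+2)}$) statement: for random $x,y_1,\ldots,y_{d'+1}$ and a high-rank factor $\{P_j\}$, the tuple $(P_j(x+y_S))_{j,S}$ is $\eps$-close to uniform on (a coset of) $\Lambda$, where $\Lambda$ is exactly the subgroup cut out by the degree/depth relations, with $\eps$ controllable in terms of the rank. This is a genuine strengthening of \cref{rankreg} — it is the kind of ``stronger equidistribution property'' alluded to in \cref{generic} and proved in~\cite{BFHHL13,hatami2014general} — and it has to be invoked explicitly rather than deduced from \cref{rankreg} applied to a shifted factor. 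Once that result is in place, the rest of your argument is sound: the $\eps<1/|\Lambda|$ bookkeeping forces every element of $\Lambda$ to actually be attained, hence the combinatorial identity $\sum_{S}(-1)^{d'+1-|S|}\Gamma(a_{1,S},\ldots,a_{m,S})=0$ holds on all of $\Lambda$, and the transfer to the $Q_j$'s is immediate since $\deg(Q_j)\le\deg(P_j)$, $\depth(Q_j)\le\depth(P_j)$ ensure $(Q_j(x+y_S))_{j,S}\in\Lambda$ for all $x,y$.
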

We prove a lemma relating the $\nrank$ of a polynomial to its $\nrank$ over constant codimensional subspaces. 
\begin{lemma}\label{lem:subspace}
Let $f:\F^n\rightarrow \F$ be a degree $d$ polynomial and $V$ be an affine subspace of $\F^n$ of dimension $n-t$. Then,
$$
\nrank(f)\leq \nrank(f\vert_V)+ t.
$$
\end{lemma}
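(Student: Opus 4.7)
The plan is to directly lift any optimal strong-rank decomposition of $f|_V$ to a decomposition of $f$ on all of $\F^n$, paying one extra product term for each of the $t$ codimensions. First I would reduce to the case that $V$ is a linear subspace: writing $V = v_0 + W$, the shift $x \mapsto x + v_0$ is an invertible affine change of variables which preserves strong-rank (it only alters the lower-order part), so we may replace $f$ by $g(x) = f(x+v_0)$ and $V$ by $W$. Next, pick a linear change of coordinates (also preserving $\nrank$) so that $W = \{x \in \F^n : x_{n-t+1} = \cdots = x_n = 0\}$.

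The key identity is the evident Taylor-style expansion around $W$: since the difference $g(x) - g(x_1,\ldots,x_{n-t},0,\ldots,0)$ vanishes when $x_{n-t+1}=\cdots=x_n=0$, grouping each monomial by the smallest index $i$ for which $\alpha_{n-t+i}>0$ gives
$$g(x) = g(x_1,\ldots,x_{n-t},0,\ldots,0) + \sum_{i=1}^t x_{n-t+i}\,R_i(x),$$
where each $R_i$ has degree at most $d-1$. Now take an optimal strong-rank decomposition $g|_W(y) = \sum_{j=1}^{s} G_j(y)H_j(y) + Q(y)$ with $s = \nrank(g|_W) = \nrank(f|_V)$, and extend $G_j,H_j,Q$ trivially to $\F^n$ (so that they only depend on the first $n-t$ coordinates). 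Substituting back,
$$g(x) = \sum_{j=1}^{s} G_j H_j + \sum_{i=1}^{t} x_{n-t+i} R_i(x) + Q.$$

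Each new term $x_{n-t+i} R_i$ obeys $\deg(x_{n-t+i}) + \deg(R_i) \leq 1 + (d-1) = d$ and $x_{n-t+i}$ is nonconstant, so it is a legal product in the strong-rank expansion. If some $R_i$ happens to be constant, then $x_{n-t+i}R_i$ is affine and can be folded into $Q$ without violating $\deg Q \leq d-1$ (the case $d=1$ being trivial since then $\nrank(f)=0$). The extended polynomials $G_j, H_j$ remain nonconstant on $\F^n$, since a nonconstant polynomial on $W$ stays nonconstant when viewed as a polynomial on $\F^n$. This yields a valid strong-rank decomposition of $g$ with at most $s+t$ product terms, giving $\nrank(f)=\nrank(g) \leq \nrank(f|_V)+t$. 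The only points requiring care are the bookkeeping around constant $R_i$'s and the fact that strong-rank is invariant under invertible affine changes of variables; there is no real obstacle beyond the division-algorithm identity above.
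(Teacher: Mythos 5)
Your proof is correct and is essentially the same argument as the paper's. The paper reduces to the codimension-one case and inducts on $t$, writing $f = f\vert_W(0,x_2,\ldots,x_n) + x_1 R$ with $\deg R \leq d-1$; you perform the full codimension-$t$ split in one Taylor-style step, but the key identity and the lifting of the strong-rank decomposition of $f\vert_V$ are identical. (One small quibble: for $d=1$ a nonconstant linear $f$ has $\nrank(f)=\infty$ rather than $0$, but this edge case is vacuous for the inequality and does not affect the argument.)
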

\begin{proof}
It suffices to prove that for a hyperplane $W$, $\nrank(f)\leq \nrank(f\vert_V)+1$. The lemma then simply follows by induction on $t$, the codimension of $V$. 

Suppose $W=\{x\in \F^n | \sum_{i=1}^n w_i x_i = a\}$, where $w\in \F^n$ and $a\in \F$. Applying an affine invertible projection, we can assume without loss of generality that $w= (1, 0, \ldots, 0)$ and $a=0$, and thus $W=\{ x\in \F^n\vert x_1=0\}$. Assume that $\nrank(f\vert_W)= r$, hence there exist nonconstant polynomials $G_1,...,G_r, H_1,...,H_r:W\to \F$ where $\deg(G_i)+\deg(H_i)\leq d$ and a degree $\leq d-1$ polynomial $Q:W\to \F$ such that 
$$
f\vert_W= \sum_{i=1}^r G_i H_i + Q.
$$
Now note that, 
$$
f(x_1,...,x_n)= f\vert_W (0,x_2,\ldots,x_n) + x_1 R(x_1,...,x_n),
$$
where $\deg(R)\leq d-1$. Thus 
$$
f= x_1 R+ \sum_{i=1}^r G_i H_i + Q,
$$
equivalently $\nrank(f)\leq r+1$.
\end{proof}
Another interpretation of the above lemma is that polynomials of high $\nrank$ are generic in a strong sense. We finally observe that  all the discussed notions of rank are subadditive.
\begin{claim}\label{claim:subadditive}
For every fixed vectors $a,b\in \F^n$, 
\begin{itemize}
\item[(i)] $\nrank(D_{a+b}f) \leq \nrank(D_af) + \nrank(D_b f)$.
\item[(ii)] $\crank(D_{a+b}f)\leq \crank(D_af) + \crank(D_b f)$.
\item[(iii)] $\rank(D_{a+b}f)\leq \rank(D_af) + \rank(D_b f)$.
\end{itemize}
\end{claim}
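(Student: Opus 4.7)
The plan is to prove all three parts simultaneously by exploiting the telescoping identity
\[
D_{a+b} f(x) \;=\; \bigl(f(x+a+b) - f(x+a)\bigr) + \bigl(f(x+a) - f(x)\bigr) \;=\; D_b f(x+a) + D_a f(x),
\]
which writes the derivative in direction $a+b$ as the pointwise sum of $D_a f$ and a translate of $D_b f$. Two elementary properties of the three rank notions then close the argument.

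First I would verify \emph{shift-invariance}: for a fixed vector $a \in \F^n$, composition with the translation $x \mapsto x+a$ leaves each rank notion unchanged. Given $P(x) = \Gamma(Q_1(x), \ldots, Q_r(x))$, the identity $P(x+a) = \Gamma(Q_1(x+a), \ldots, Q_r(x+a))$ exhibits a decomposition of $P(\cdot + a)$ of the same size, and the shifted polynomials $Q_i(\cdot + a)$ retain their degrees and classicality. The analogous observation for an $\nrank$-decomposition $P = \sum_i G_i H_i + Q$ preserves nonconstancy of the $G_i, H_i$ as well as all degree bounds.

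Next I would verify \emph{subadditivity under pointwise sum}: given decompositions of $P$ and $R$ of sizes $r_1$ and $r_2$ in a common rank notion, the polynomial $P+R$ admits a decomposition of size $r_1+r_2$, obtained by concatenating the defining polynomials and setting $\Gamma(\mathbf{u},\mathbf{v}) := \Gamma_1(\mathbf{u}) + \Gamma_2(\mathbf{v})$ in the $\rank$/$\crank$ case, or by concatenating the product terms $G_i H_i$ and adding the two low-degree remainders in the $\nrank$ case. Combining the two steps with the telescoping identity yields all three inequalities at once: apply the shift-invariance to $D_b f(\cdot+a)$, then apply subadditivity to the sum $D_a f(x) + D_b f(x+a)$.

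The only thing to watch is that the degree constraints implicit in each rank notion (e.g.\ $\deg G_i + \deg H_i \leq \deg(D_{a+b} f)$ and $\deg Q \leq \deg(D_{a+b}f) - 1$ for $\nrank$) are inherited by the combined decomposition. This is automatic because the building polynomials produced by decompositions of $D_a f$ and $D_b f$ have degree strictly less than $\deg f$, and the decomposition inequalities sit naturally at the common level $\deg f - 1$ in which all three derivatives live. No substantive obstacle arises; this is a short direct verification once shift-invariance and subadditivity are in hand.
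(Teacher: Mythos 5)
Your proof is correct and uses the same telescoping identity $D_{a+b}f(x)=D_bf(x+a)+D_af(x)$ together with shift-invariance of the three rank notions, exactly as the paper does. You spell out the subadditivity-under-pointwise-sum step, which the paper leaves implicit, but the argument is the same.
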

\begin{proof}
We compute $D_{a+b}f(x)$,
\begin{align*}
D_{a+b}f(x)&= f(x+a+b)- f(x)\\ 
&= f(x+a+b)-f(x+a) + f(x+a) - f(x)\\ 
&= D_bf(x+a) + D_af(x).
\end{align*}
The claim follows by observing that $\nrank(D_bf(x+a)) \leq \nrank(D_bf(x))$, $\crank(D_bf(x+a)) \leq \crank(D_bf(x))$, and $\rank(D_bf(x+a)) \leq \rank(D_bf(x))$, as the degrees of polynomials are preserved under affine shifts.
\end{proof}

\section{Structure of biased polynomials}
Throughout this section we will assume $\F=\F_q$ is a fixed prime field. By the discussion \cref{generalF}, the dependence on $|\F|$ can be removed from every step of our proof. \ignore{Further, by \cref{generalF} when $\F=\F_{q^m}$ is a fixed field, our proof can be extended, under the caveat that  the constants will depend on $|\F|$.}

We will need the following structure theorem for subsets of $\F^n$ with small doubling. For a set $A\subseteq \F^n$ and $k\geq 1$, denote the set $kA-kA:=\left\{a_1+\cdots+a_k-b_1-\cdots -b_k \vert a_1,...,a_k, b_1,...,b_k\in A\right\}$.
\begin{lemma}[Bogolyubov-Chang]\label{chang}
Let $A\subseteq \F^n$ such that $|A|=\mu |\F|^n$. Then, for some $k\leq \max(1,\lceil \frac{1}{2}(\log_{\frac{|\F|}{|\F|-1/2}}(\frac{2}{\mu})+2)\rceil)$, $kA-kA$ contains a subspace $V$ of $\F^n$ of co-dimension at most $\log_{\frac{|\F|-1/2}{|\F|-1}}(\frac{1}{2\mu})$.
\end{lemma}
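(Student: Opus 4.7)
The plan is a standard Fourier-analytic Bogolyubov argument paired with a density-increment bound on the dimension of the large spectrum of $1_A$. Use the convention $\widehat{1_A}(\xi) := \E_{x\in\F^n}[1_A(x)\,e_\F(-\iprod{\xi,x})]$, so that $\widehat{1_A}(0)=\mu$ and Parseval gives $\sum_\xi|\widehat{1_A}(\xi)|^2=\mu$. Fix the threshold $\rho:=(|\F|-1/2)/|\F|=1-1/(2|\F|)$, define the large spectrum
$$\mathrm{Spec}_\rho(A)\;:=\;\set{\xi\in\F^n:|\widehat{1_A}(\xi)|\ge \rho\mu},$$
and let $V\subseteq\F^n$ be the annihilator of the $\F$-span of $\mathrm{Spec}_\rho(A)$, so that $\mathrm{codim}(V)=\dim_\F\mathrm{span}(\mathrm{Spec}_\rho(A))$.

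First I would show $V\subseteq kA-kA$ for the stated $k$. The iterated convolution $1_A^{*k}*1_{-A}^{*k}$ is supported on $kA-kA$, so it suffices to prove strict positivity on $V$. By Fourier inversion,
$$1_A^{*k}*1_{-A}^{*k}(x)\;=\;\sum_\xi |\widehat{1_A}(\xi)|^{2k}\,e_\F(\iprod{\xi,x}),$$
and for $x\in V$ every $\xi\in\mathrm{Spec}_\rho(A)$ yields a nonnegative summand (including the main term $\mu^{2k}$ from $\xi=0$). Pulling out $(\rho\mu)^{2k-2}$ off the spectrum and applying Parseval gives
$$\Big|\sum_{\xi\notin\mathrm{Spec}_\rho(A)}|\widehat{1_A}(\xi)|^{2k}\,e_\F(\iprod{\xi,x})\Big|\;\le\;(\rho\mu)^{2k-2}\sum_\xi|\widehat{1_A}(\xi)|^2\;\le\;(\rho\mu)^{2k-2}\,\mu.$$
Strict positivity then reduces to $\rho^{2k-2}<\mu$; since $1/\rho=|\F|/(|\F|-1/2)$, the stated value of $k$ in fact gives $\rho^{2k-2}\le\mu/2$, so the main term dominates with a factor-of-two margin.

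Next I would bound $\dim_\F\mathrm{span}(\mathrm{Spec}_\rho(A))$ by a greedy density-increment argument. For each nonzero $\xi\in\mathrm{Spec}_\rho(A)$, the hypothesis $|\widehat{1_A}(\xi)|\ge\rho\mu$ forces the pushforward of the uniform measure on $A$ under $\iprod{\xi,\cdot}$ to be non-uniform on $\F$. A short extremal computation — minimizing the heaviest coset subject to having first Fourier coefficient of magnitude at least $\rho$ is realized by the distribution placing mass $(1+\rho(|\F|-1))/|\F|$ on one coset and spreading the remainder equally over the other $|\F|-1$ cosets — shows that some coset of $\ker\iprod{\xi,\cdot}$ carries $A$-density at least $\beta\mu$, where $\beta:=(|\F|-1/2)/(|\F|-1)=1+1/(2(|\F|-1))$. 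Selecting a maximal $\F$-linearly independent collection $\xi_1,\dots,\xi_d\in\mathrm{Spec}_\rho(A)$ and iterating — at step $i$ passing to a coset of $\xi_i^\perp$ on which the current conditional density grows by a factor $\ge\beta$ — produces an affine subspace of codimension $d$ on which $A$ has density at least $\mu\beta^d$. Since densities are bounded by $1$ (with the factor-of-$2$ slack absorbed to align with the $\log(2/\mu)$ on the $k$ side), this gives $d\le\log_\beta(1/(2\mu))$, exactly the claimed codimension bound.

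The main obstacle will be the extremal density-increment calculation in the second step: verifying that the chosen threshold $\rho=1-1/(2|\F|)$ really produces the precise increment factor $\beta=1+1/(2(|\F|-1))$, rather than something weaker. Concretely, one must minimize $\max_c p_c$ over probability distributions $p$ on $\F$ with $|\widehat p(1)|\ge\rho$ and check that the minimizer's heaviest coset carries mass at least $\beta/|\F|$. The Fourier/convolution step in the first half is routine, and once the density increment is in hand, iterating it along the linearly independent $\xi_i$'s assembles the bound straightforwardly.
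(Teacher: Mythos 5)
The Bogolyubov half of your argument is fine: the Fourier computation showing $V\subseteq kA-kA$ for $V$ the annihilator of $\mathrm{span}(\mathrm{Spec}_\rho(A))$ and the stated $k$ is correct, as is the reduction to $\rho^{2k-2}<\mu$. The problem is the second half. After restricting $A$ to the densest coset of $\xi_1^\perp$, the restricted set $A'$ has \emph{no reason} to retain a large Fourier coefficient at $\xi_2$: the hypothesis $|\widehat{1_A}(\xi_2)|\geq\rho\mu$ is an average over all cosets of $\xi_1^\perp$, and the coset you selected for the density increment is exactly the one you have no spectral control over. So the increment cannot be iterated along a pre-selected maximal linearly independent set $\xi_1,\dots,\xi_d\subseteq\mathrm{Spec}_\rho(A)$; this is not a write-up gap but a real obstruction. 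Indeed the inequality $\dim_\F\mathrm{span}(\mathrm{Spec}_\rho(A))\leq\log_\beta(1/(2\mu))$ you are after is \emph{false}: take $q=2$, $n\geq 16$, and let $A\subseteq\F_2^n$ be the union of the fibers $\{x:x_{[16]}=c\}$ over all $c\in\F_2^{16}$ with Hamming weight $|c|\leq 2$. Then $\mu=137/2^{16}$ and for each $i\leq 16$ one computes $\widehat{1_A}(e_i)=\tfrac{105}{137}\mu>\tfrac34\mu$, so $e_1,\dots,e_{16}\in\mathrm{Spec}_{3/4}(A)$ are linearly independent, giving $\dim_\F\mathrm{span}(\mathrm{Spec}_{3/4}(A))\geq 16$; but $\log_{3/2}(1/(2\mu))=\log_{3/2}(2^{15}/137)\approx 13.5<16$. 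What is true is Chang's lemma, $\dim_\F\mathrm{span}(\mathrm{Spec}_\rho(A))=O(\rho^{-2}\log(1/\mu))$, but that costs Rudin's inequality and does not give the constants of the lemma.

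The fix is to not fix the large spectrum up front, and to give up on $V$ being the annihilator of $\mathrm{Spec}_\rho(A)$. Iterate on the set itself: maintain an affine subspace $W_i$ of codimension $i$ and $A_i:=A\cap W_i$; if there is a nonzero character $\xi$ of $W_i$ with $|\widehat{1_{A_i}}(\xi)|\geq\rho\cdot\mathrm{density}(A_i)$, your one-step extremal computation gives a coset of $\ker\xi$ inside $W_i$ on which the density rises by a factor $\geq\beta$, so restrict and recurse; otherwise stop. Termination in $d\leq\log_\beta(1/\mu)$ steps is forced by density being at most $1$, and at termination the Bogolyubov calculation you already wrote, applied to $A_d$ inside the linear part $V_d$ of $W_d$, gives $V_d\subseteq kA_d-kA_d\subseteq kA-kA$ because now the large spectrum of $A_d$ relative to $V_d$ \emph{is} trivial by construction. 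This produces a data-dependent subspace $V=V_d$ (not the annihilator of the global spectrum) whose codimension is $d$, matching the shape of the stated bound; the one-step density-increment computation you sketch is exactly the right engine, it just has to be re-run inside each restricted set rather than run once globally.
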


The following lemma states that for a function $f:\F^n\rightarrow \F$ to be biased, there must be a positive set of directions $y$ for which $D_yf$ is somewhat biased.
\begin{lemma}\label{lem:biasedderivatives}
Suppose $f:\F^n\rightarrow \F$ is such that $\bias(f)=\delta$. Then there exists a set $A\subseteq \F^n$, with $|A|\geq \frac{\delta^2}{2}|\F|^n$ such that for every $y\in A$, $\bias(D_yf)\geq \frac{\delta^2}{2}$.
\end{lemma}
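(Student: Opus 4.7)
The plan is to express $\bias(f)^2$ as an average of biases of derivatives, and then apply an averaging argument.

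First, I would expand $\bias(f)^2$ by writing it as $\E_x[\omega^{f(x)}]\overline{\E_x[\omega^{f(x)}]}$ and combining the two expectations using the change of variable $z=x+y$:
\[
\bias(f)^2 \;=\; \Bigl|\E_{x\in\F^n}\bigl[\omega^{f(x)}\bigr]\Bigr|^2 \;=\; \E_{x,y\in \F^n}\bigl[\omega^{f(x+y)-f(x)}\bigr] \;=\; \E_{y\in \F^n}\Bigl[\E_{x\in \F^n}\bigl[\omega^{D_y f(x)}\bigr]\Bigr].
\]
The left-hand side is real and equal to $\delta^2$, and moving to absolute values inside the outer expectation can only increase the right-hand side. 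Therefore
\[
\E_{y\in \F^n}\bigl[\bias(D_y f)\bigr] \;\geq\; \delta^2.
\]

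Now I would apply Markov's inequality. Define $A := \bigl\{y\in \F^n : \bias(D_y f)\geq \delta^2/2\bigr\}$, and let $\alpha = |A|/|\F|^n$. Since $\bias(D_y f)\leq 1$ always, splitting the average into the contributions from $A$ and its complement gives
\[
\delta^2 \;\leq\; \E_y[\bias(D_y f)] \;\leq\; \alpha \cdot 1 + (1-\alpha)\cdot \tfrac{\delta^2}{2} \;\leq\; \alpha + \tfrac{\delta^2}{2}.
\]
Rearranging yields $\alpha \geq \delta^2/2$, i.e.\ $|A|\geq (\delta^2/2)|\F|^n$, which is exactly the claim.

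There is no real obstacle here; the only point to be careful about is justifying that taking absolute values inside the outer expectation preserves the inequality, which is immediate since $\delta^2 = \E_y[\E_x[\omega^{D_y f(x)}]]$ is real and each inner expectation is bounded in modulus by $1$. The argument is the standard derivation that $\|e_\F(f)\|_{U^1}^2 = \E_y \langle \Delta_y e_\F(f),1\rangle$, restricted to the $y$-direction, followed by averaging.
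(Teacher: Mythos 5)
Your proof is correct and follows essentially the same route as the paper: both establish $\E_{y}\bigl[\bias(D_yf)\bigr]\geq \delta^2$ by expanding $\bias(f)^2$ as a double average over a derivative direction $y$ and pushing the absolute value inside, and then conclude via the same Markov/averaging step using $\bias(D_yf)\leq 1$. Your write-up merely spells out the averaging inequality $\delta^2 \leq \alpha + (1-\alpha)\delta^2/2$ explicitly, which the paper leaves implicit.
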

\begin{proof}
We compute the average bias of $D_yf$ for $y\in \F^n$ uniformly at random.
\begin{equation}
\E_{y\in \F^n} \left[ \bias(D_yf)\right] = \E_{y\in \F^n}\left[ \abs{\E_{x\in \F^n} e_\F(f(x+y)-f(x))}\right] 
\geq \abs{ \E_{z,x\in \F^n} \left[e_\F(f(z)) e_\F(-f(x))\right]}= \delta^2.
\end{equation}
Thus, since $\bias(f)\leq 1$, we get
\begin{equation}\label{eq:randomderiv}
\Pr_{y\in \F^n} \left[ \bias(D_yf)\geq \frac{\delta^2}{2}\right]\geq \frac{\delta^2}{2}.
\end{equation}
The lemma follows by choosing $A:=\{y\in \F^n \vert \bias(D_yf)\geq \frac{\delta^2}{2}\}\subseteq \F^n$.  
\end{proof}
We will use this lemma along with \cref{chang} and \cref{claim:subadditive} to show that for every biased function $f$ there exists a not too small subspace restricted to which all the derivatives of $f$ are biased.

\ignore{\subsection{Biased quintic polynomials}

\restate{\cref{main} Biased quintic polynomials I}{
Suppose $f:\F^n \rightarrow \F$ is a degree five polynomial with $\bias(f)=\delta$. 
Then 
$$
\nrank(f)\leq c=c(\delta),
$$
namely, there exist $c_{\ref{main}}\leq c(\delta)$, nonconstant polynomials $G_1,...,G_c, H_1,...,H_c$ and a polynomial $Q$ such that the following holds.
\begin{itemize}
\item $f= \sum_{i=1}^c G_i H_i+Q$.
\item For every $i\in [c]$, $\deg(G_i)+\deg(H_i)\leq 5$.
\item $\deg(Q)\leq 4$.
\end{itemize}
}
Note that $c_{\ref{main}}$ does not depend on $|\F|$ or $n$.}
\ignore{\begin{proof}
By \cref{lem:biasedderivatives} there exists a set $A\subseteq \F^n$, with $|A|\geq \frac{\delta^2}{2}|\F|^n$ such that for every $y\in A$, 
$$
\bias(D_yf)\geq \frac{\delta^2}{2}.
$$
Applying \cref{chang}, there is a subspace $V$ of co-dimension $t:=\log_{\frac{|\F|-1/2}{|\F|-1}}(\frac{1}{\delta^2})$ such that $V\subseteq kA-kA$, where $k=O(\log (\frac{1}{\delta^2}))$. Using \cref{claim:subadditive} we will show that for every $y\in V$,
\begin{equation}\label{eq:kA-kA}
\nrank(D_yf)\leq c_1(|\F|, \delta)=O( \poly(\frac{1}{\delta})).
\end{equation}
Note that for any $y\in \F^n$, $\deg(D_yf)\leq 4$. By the definition of $A$, for every $y\in A$, $\bias(D_yf)\geq \frac{\delta^2}{2}$ and thus by \cref{HS:quartic}, $\nrank(D_yf)\leq c_0(|\F|, \delta)$ for some $c_0=O(\poly(\frac{1}{\delta}))$. \cref{eq:kA-kA} is implied by \cref{claim:subadditive}, as $V\subset kA-kA$.

By a simple averaging argument, there is an affine shift of $V$, $W:=V+h$ such that $\bias(f\vert_{W})\geq \delta$. Let us denote $\tf:= f\vert_W$. By \cref{lem:subspace}, it is sufficient to prove that $\nrank(\tf)$ is bounded by a constant depending only on $\delta$ and $|\F|$. Since $\bias(\tf)\geq \delta$,  \cref{arank} implies $\crank(\tf)\leq r_0=r_0(\delta,|\F|)$. Moreover, there are $y_1,\ldots, y_{r_0}\in W$ and a $\Gamma:\F^{r_0}\rightarrow \F$ such that 
\begin{equation}\label{eq:rankinW}
\tf= \Gamma(D_{y_1}\tf, \ldots, D_{y_{r_0}}\tf).
\end{equation}
Note that for all $i\in [r_0]$,
\begin{equation}\label{eq:drank}
\nrank_4(D_{y_i}\tf)\leq \nrank(D_{y_i}f)\leq c_0
\end{equation}
This is due to the fact that affine transformations can only decrease the degrees of polynomials and thus it can only decrease the $\nrank$ of polynomials. 

\begin{remark}
We point out that the subscript $4$ in the LHS of \cref{eq:drank} is necessary, as can be seen by the following example. Assume $n=3m+4$ and $Q= x_{n-3}x_{n-2}x_{n-1}x_n+ \sum_{i=1}^m x_{3i-2}x_{3i-1}x_{3i}$. Now note that 
$$
\nrank(Q)=1,
$$
while
\begin{itemize}
\item $\nrank(Q\vert_{x_n=0})= \nrank(\sum_{i=1}^m x_{3i-2}x_{3i-1}x_{3i}) = \omega_n(1),$ since $\norm{e_\F(\sum_{i=1}^m x_{3i-2}x_{3i-1}x_{3i})}_{U^3} = o(1)$.
\item $\nrank_4(Q\vert_{x_n=0})=0$, since $\deg(Q\vert_{x_n=0})<4$.
\end{itemize}
\end{remark}
\cref{eq:drank} implies that there exist nonconstant polynomials 
$$
\set{\iG_1,\ldots, \iG_{c_0}, \iH_1,\ldots,\iH_{c_0}}_{i=1}^{r_0} \subset \Poly_{\leq 3}(W\to \F),
$$
and polynomials $Q_1,\ldots, Q_{r_0}$ such that For every $i\in [r_0]$
\begin{itemize}
\item[(i)] $D_{y_i}\tf= \sum_{j=1}^{c_0} \iG_j\iH_j + Q_i$.
\item[(ii)] For every $j\in [c_0]$, $\deg(\iG_j)+\deg(\iH_j)\leq 4$, in particular $\deg(\iG_j), \deg(\iH_j)\leq 3$.
\item[(iii)] $\deg(Q_i)\leq 3$.
\end{itemize}
Combining this with \cref{eq:rankinW}, there exists a map $\Lambda:\F^{(c_0+1)r_0}\to \F$ such that
\begin{equation}\label{eq:cubicfactor}
\tf = \Lambda\left( (\iG_1,\ldots, \iG_{c_0}, \iH_1,\ldots,\iH_{c_0})_{i=1}^{r_0}, Q_1,\ldots, Q_{c_0}\right).
\end{equation}
We will need the following refinement on \cref{factorreg}, which says that a collection of classical cubic polynomials can be refined to a high-rank degree $\leq 3$ \em{classical} polynomial factor. 
\begin{lemma}[Nonclassical regularity lemma for classical cubic polynomials]\label{cubicfactorreg}
Let $r: \N \to \N$ be a non-decreasing function. Then, there is a  function
$C_{\ref{factorreg}}^{\F,r}: \N \to \N$  such
that the following holds. Suppose $\cB$ is a factor defined by classical polynomials $P_1,\dots, P_C : \F^n \to \T$  of degree at most $3$.
Then, there is  an $r$-regular factor $\cB'$ consisting only of classical polynomials
$Q_1, \dots, Q_{C'}: \F^n \to \T$ of degree $\leq 3$ such that $\cB'
\succeq_{sem} \cB$ and  $C' \leq  C_{\ref{cubicfactorreg}}^{(\F,r)}(C)$.
\end{lemma}

We postpone the proof of \cref{cubicfactorreg} to the end of this section and show how it can be used to conclude \cref{main}. Fix $r_1:\N\to \N$ a nondecreasing function as in \cref{lem:degreepreserve} for $d=3$. Let $\cB$ be the polynomial factor defined by $\{\iG_1,\ldots, \iG_{c_0}, \iH_1,\ldots,\iH_{c_0}, Q_i\}_{i=1}^{r_0}$. By (ii) and (iii) above, $\cB$ is defined by $(c_0+1)r_0$ degree $\leq 3$ classical polynomials. Applying \cref{cubicfactorreg} to $\cB$ with regularity parameter $r_1$, we obtain a refinement $\cB' \succeq_{sem} \cB$, where $\cB'$ is defined by $c_2:= C_{\ref{cubicfactorreg}}^{(\F,r_1)}((c_0+1)r_0)$ classical degree $\leq 3$ polynomials $R_1,\ldots, R_{c_2}:\F^n\rightarrow \F$. Namely, there exists a function $\cK:\F^{c_2}\to \F$, such that
$$
\tf= \cK(R_1,\ldots, R_{c_2}).
$$
Applying an affine transformation, assume without loss of generality that $W=\set{x\in \F^n\vert x_1=x_2=\cdots=x_{t}=0}$. Moreover, we may assume that $n-t>c_2$, since otherwise, $\tf$ has at most $3(n-t)^5= O(c_2^5)$ monomials, making the theorem statement trivial. For every $i\in [c_2]$ let $d_i:= \deg(R_i)$, $s_i:=\sum_{j=1}^i d_i$, and define $R'_i:= x_{s_{i-1}+1}\cdots x_{s_i}$. We have that $\deg(R'_i)=\deg(R_i)$ and thus by \cref{lem:degreepreserve}, 
$$
\deg(\cK(R'_1,\ldots, R'_{c_2})) \leq \deg(\cK(R_1,\ldots, R_{c_2}))= \deg(\tf) = 5.
$$
Note that $\cK:\F^{c_2}\to \F$ is a polynomial, and $R'_1,...,R'_{c_2}$ are monomials on disjoint variables, thus plugging in $R'_i$s into $\cK$'s variables, no cancelations can occur. In particular, 
$$
\cK(y_1,\ldots,y_{c_2})= \sum_{S\subseteq [c_2], \sum_{i\in S} d_i\leq 5} \alpha_S \prod_{i\in S} y_i,
$$ 
where $\alpha_S\in \F$ are field elements. Subsequently,
\begin{equation}\label{eq:strongform}
\tf=\cK(R_1,\ldots, R_{c_2})= \sum_{S\subseteq [c_2], \sum_{i\in S} d_i\leq 5} \alpha_S \prod_{i\in S} R_i.
\end{equation}
Thus, $\nrank(\tf)\leq 3c_2^3$, and by \cref{lem:subspace} we obtain $\nrank(f)\leq 3c_2^3+t$ as desired.
\end{proof}
\begin{proofof}{of \cref{cubicfactorreg}}
The proof is a simple modification of the iterative proof of \cref{factorreg}. \cref{factorreg} is proved by a transfinite induction on the vector containing the number of (possibly nonclassical) polynomials of each degree and depth which define the polynomial factor. At every step, if a linear combination of the polynomials in the factor has Gowers norm that is larger than the desired regularity parameters, we refine the factor. However, we observe that since we start with a factor that is defined by degree $\leq 3$ classical polynomials, and the inverse theorem for Gowers $U^3$ norms holds for classical polynomials, no nonclassical polynomials will be introduced during this refinement process.
\end{proofof}}

\ignore{\subsection{Proof of \cref{main2}}

\restate{\cref{main2} (Biased quintic polynomials II)}{
Suppose $f:\F^n \rightarrow \F$ is a degree five polynomial with $\bias(f)=\delta$. There exists an affine subspace $V$ of dimension $\Omega(n)$ such that $f\vert_V$ is constant, where the constant hidden in $\Omega$ depends only on $\delta$.
}
}

\ignore{\begin{proof}
We will follow the proof of \cref{main}. We showed the existence of an affine subspace $W$ of dimension $n-t$ for $t=poly(\log(\frac{1}{\delta^2}))$, for which \cref{eq:strongform} holds. Since, the polynomials $R_1,...,R_{c_2}$ are all of degree $\leq 3$, \cref{eq:strongform} can be re-ordered in the following form
$$
f\vert_W = \sum_{i=1}^{t_1} L_i G_i + \sum_{j=1}^{t_2} Q_j H_j + M,
$$
where $t_1+t_2\leq c_2^5$, $M$ is a cubic polynomial, $L_i$s are nonconstant linear polynomials, $Q_i$s are nonconstant quadratic polynomials, 
$H_i$s are nonconstant cubic polynomials, and $G_i$s are nonconstant quartic polynomials. Restricting to the $n-t-t_1$ dimensional subspace $W_1=\{x\in \F^n \vert L_i(x)=0 \; \forall i\in [t_1]\}$, we get
\begin{equation}\label{eq:W1}
f\vert_{W_1}= \sum_{j=1}^{t_2} Q'_j H'_j + M',
\end{equation}
where $Q'_j$s are quadratics, $H'_j$s and $M'$ are cubics. We will use the following result of Cohen and Tal~\cite{CT15} on the structure of low degree polynomials. 
\begin{theorem}[\cite{CT15}, Theorem 3.5]\label{thm:avishay}
Let $q$ be a prime power. Let $f_1,\ldots, f_\ell:\F_q^n\rightarrow \F_q$ be polynomials of degree $d_1,\ldots, d_\ell$ respectively. Let $k$ be the least integer such that
$$
n\leq k+ \sum_{j=0}^{\ell} (d_i+1)\sum_{j=0}^{d_i-1}(d_i-j)\cdot {k+j-1 \choose j}.
$$
Then, for every $u_0\in \F_q^n$ there exists a subspace $U\subseteq \F_q^n$ of dimension $k$, such that for all $i\in [\ell]$, $f_i$ restricted to $u_0+U$ is a constant function. 

In particular, if $d_1,...,d_\ell\leq d$, then the above holds for $k=\Omega((n/\ell)^{\frac{1}{d-1}})$.
\end{theorem}
In \cite{CT15}, the above theorem was used to prove that biased degree three and four polynomials vanish over a large subspace. 
\begin{lemma}[\cite{CT15}]\label{lem:avishay}
Let $f:\F^n\to \F$ be a degree $d\leq 4$ polynomial with $\bias(f)=\delta$. If $d=3$ then there is a subspace $V$ of dimension $n-O(\log(1/\delta)^2)$  such that $f\vert_V$ is a constant. If $d=4$, then there is a subspace $V$ of dimension $\frac{n}{poly(1/\delta)}$ such that $f\vert_V$ is a constant.
\end{lemma}
Applying \cref{thm:avishay} to quadratic forms $Q_1,\ldots, Q_{t_2}$, there is an affine subspace $W_2$ of $W_1$ of dimension $\Omega((n-t-t_1)/t_2)$ such that for all $i\in [t_2]$, $Q_i$ restricted to $W_2$ is a constant. Thus restricting \cref{eq:W1} to $W_2$, we get that $\deg(f\vert_{W_2})\leq 3$. 
Recall that $\bias(f\vert_W)\geq \delta$, and thus there is an affine shift $W_2+h$ such that $\bias(f\vert_{W_2+h})\geq \delta$. 
The polynomial $f\vert_{W_2+h}$ is of degree $\leq 4$, since for $x\in W_2$, 
\begin{align*}
f(x+h)= f(x+h)-f(x)+ f(x)= D_hf(x) + f\vert_{W_2}(x), 
\end{align*}
where $D_hf$ is quartic and $f\vert_{W_2}$ is a cubic. Thus $f\vert_{W_2+h}$ is a degree four polynomial with bias $\geq \delta$. By \cref{HS:cubic}, there is a subspace $V$ of dimension $(n-t-t_1/t_2\mathrm{poly}(1/\delta))=\Omega(n)$ such that $f\vert_V$ is constant.
\ignore{ $f\vert_{W_2+h}$ is determined by the value of $t_3=O(poly \log(1/\delta))$ degree $\leq 2$ polynomials . A final application of \cref{thm:avishay} to this collection of polynomials, gives an affine subspace $V$ of dimension $\Omega((n-t-t_1)/t_2t_3)$ for which $f\vert_V$ is a constant.}
\end{proof}
}

\subsection{Structure of biased polynomials I, when $d<|\F|+4$}\label{section:generald}
In this section we prove that biased degree $d$ polynomials are strongly structured when $d<|\F|+4$. 

\restate{\cref{generald} [Biased degree $d$ polynomials I (when $d<|\F|+4$)]}{
Suppose $d>0$ and $\F=\F_{q}$ with $d< q+4$. Let $f:\F^n \rightarrow \F$ be a degree $d$ polynomial with $\bias(f)=\delta$. Then $\nrank(f)\leq c(\delta,d)$, namely there exists $c_{\ref{generald}}\leq c(\delta, d)$, nonconstant polynomials $G_1,...,G_c, H_1,...,H_c$ and a polynomial $Q$ such that the following hold.
\begin{itemize}
\item $f= \sum_{i=1}^c G_i H_i+Q$.
\item For every $i\in [c]$, $\deg(G_i)+\deg(H_i)\leq d$.
\item $\deg(Q)\leq d-1$.
\end{itemize}
Note that $c_{\ref{generald}}$ does not depend on $n$ or $|\F|$.
}

We will assume $\F=\F_p$ is a fixed prime field, and the constant $c=c(\delta,d,|\F|)$ we obtain will depend on $|\F|$. However by the discussion \cref{generalF}, it is straightforward to remove the dependence of $c(\delta,d,|\F|)$ on $|\F|$. \ignore{ Moreover, our proof can be extended to hold for non-prime field $\F=\F_{q^m}$ under the downside that  the constants will depend on $|\F|$. This dependence on $|\F|$ can be removed if $q>d$.}
\begin{proof}
By \cref{lem:biasedderivatives} there exists a set $A\subseteq \F^n$, with $|A|\geq \frac{\delta^2}{2}|\F|^n$ such that for every $y\in A$, 
$$
\bias(D_yf)\geq \frac{\delta^2}{2}.
$$
Thus by \cref{rankreg} for every $y\in A$,
$$
\crank(D_yf)\leq r= r_{\ref{rankreg}}(d,|\F|, \delta).
$$
Applying \cref{chang}, there is a subspace $V$ of co-dimension $t:=\log_{\frac{|\F|-1/2}{|\F|-1}}(\frac{1}{\delta^2})$ such that $V\subseteq kA-kA$, where $k=O(\log (\frac{1}{\delta^2}))$. By \cref{claim:subadditive} (ii), since $V\subseteq kA-kA$ we have that for every $y\in V$, 
$$
\crank(D_yf) \leq c_1\leq 2k r.
$$
By a simple averaging argument, there is an affine shift of $V$, $W:=V+h$ such that $\bias(f\vert_{W})\geq \delta$. Let us denote $\tf:= f\vert_W$. By \cref{lem:subspace}, it is sufficient to prove that $\nrank(\tf)\leq c_1(|\F|, \delta)$. Since $\bias(\tf)\geq \delta$,  \cref{rankreg} implies $\crank(\tf)\leq r_0=r_0(\delta,|\F|)$, moreover, there are $y_1,\ldots, y_{r_0}\in W$ and a $\Gamma:\F^{r_0}\rightarrow \F$ such that 
\begin{equation}\label{eq:rankinW}
\tf= \Gamma(D_{y_1}\tf, \ldots, D_{y_{r_0}}\tf).
\end{equation}

Note that for all $i\in [r_0]$,
\begin{equation}\label{eq:crank}
\crank_{d-1}(D_{y_i}\tf)\leq \crank(D_{y_i}f)\leq c_0
\end{equation}
This is due to the fact that an affine transformation can only decrease the degrees of polynomials and thus it can only decrease the $\crank$ of polynomials. 
\begin{remark}
We point out that the subscript $d-1$ in the LHS of \cref{eq:crank} is necessary, as can be seen by the following example. Suppose $d-1=4$, $m>0$ and $n=3m+4$.  Let $Q= x_{n-3}x_{n-2}x_{n-1}x_n+ \sum_{i=1}^m x_{3i-2}x_{3i-1}x_{3i}$. Now note that 
$$
\crank(Q)\leq 3,
$$
while
\begin{itemize}
\item $\crank(Q\vert_{x_n=0})= \crank(\sum_{i=1}^m x_{3i-2}x_{3i-1}x_{3i}) = \omega_n(1),$ since $\norm{e_\F(\sum_{i=1}^m x_{3i-2}x_{3i-1}x_{3i})}_{U^3} = o(1)$.
\item $\crank_4(Q\vert_{x_n=0})=1$, since $\deg(Q\vert_{x_n=0})<4$.
\end{itemize}
\end{remark}
By \cref{eq:crank} there exist degree $\leq d-2$ polynomials 
$\set{\iG_1,\ldots, \iG_{c_0}}_{i=1}^{r_0}$
and a function $\Lambda:\F^{r_0c_0}\rightarrow \F$ such that
\begin{equation}\label{eq:d-2factor}
\tf = \Lambda\left( (\iG_1,\ldots, \iG_{c_0})_{i=1}^{r_0}\right).
\end{equation}
We would like to regularize this collection of polynomials, however we would like to avoid any appearance of nonclassical polynomials. The following observation allows us to do exactly that as long as $d< |\F|+4$.
\begin{claim}[Nonclassical regularity lemma over large characteristic]\label{generaldfactorreg}
Let $r: \N \to \N$ be a non-decreasing function. And $d$ be such that $d<|\F|+4$. Then, there is a  function
$C_{\ref{generaldfactorreg}}^{\F,r}: \N \to \N$  such
that the following holds. Suppose $\cB$ is a factor defined by classical polynomials $P_1,\dots, P_C : \F^n \to \T$  of degree at most $d-2$.
Then, there is  an $r$-regular factor $\cB'$ consisting only of classical polynomials
$Q_1, \dots, Q_{C'}: \F^n \to \T$ of degree $\leq d-2$ such that $\cB'
\succeq_{sem} \cB$ and  $C' \leq  C_{\ref{generaldfactorreg}}^{(\F,r)}(C)$.
\end{claim}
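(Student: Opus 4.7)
The plan is to mimic the iterative refinement proof of \cref{factorreg}, carefully tracking classicality. Start with the classical factor $\cB$ and repeatedly perform the following refinement: if $\cB$ is not yet $r$-regular, then there is a $\Z$-linear combination $P := \sum_i \lambda_i P_i$ of maximal degree $e \le d - 2$ with $\rank_e(P) \le r(|\cB|)$, so $P = \Gamma(Q_1, \ldots, Q_s)$ for some $s \le r(|\cB|)$ and polynomials $Q_j$ of degree at most $e - 1$. Append the $Q_j$ to $\cB$ and iterate. By the same Ackermann-type potential argument as in \cite{MR2948765}, this process terminates with a bound $|\cB'| \le C^{(\F, r)}_{\ref{generaldfactorreg}}(C)$, and the resulting $\cB'$ is a semantic refinement of $\cB$.

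The substantive content of the claim is that the appended $Q_j$ can always be chosen classical, and this is where $d < |\F| + 4$ is used. Since each $P_i$ is classical of depth $0$, so is the combination $P$. The hypothesis gives $e - 1 \le d - 3 \le |\F|$, and whenever $e - 1 < |\F|$ (that is, $e \le |\F|$), \cref{struct} forces any polynomial of degree at most $e - 1$ to have depth $0$: the constraint $k(q-1) \le e - 2$ admits only $k = 0$ as a nonnegative integer solution in this range. Thus every refinement step with $e \le |\F|$ is automatically classical, no matter how the $Q_j$ were obtained from the low-rank witness for $P$.

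The only case requiring extra care is $e = |\F| + 1$, which is possible only when $d = |\F| + 3$. For this boundary I would refine $P$ not via the general nonclassical inverse Gowers theorem (which could introduce depth-$1$ polynomials of degree $|\F|$) but by \cref{rankreg} applied directly to the classical polynomial $P$: one first converts the small nonclassical rank of $P$ into a bias lower bound $\bias(P) \ge \epsilon'$, exploiting the fact that $P$ is classical and the relevant inverse step can be carried out with classical outputs in this restricted degree range, and then Kaufman--Lovett supplies directions $y_1, \ldots, y_{r'} \in \F^n$ and $\Gamma'$ with $P = \Gamma'(D_{y_1}P, \ldots, D_{y_{r'}}P)$, each $D_{y_j}P$ being classical of degree at most $|\F|$. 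These derivatives can be appended in place of the $Q_j$. This boundary case is the main obstacle: it is precisely what costs the two extra units of degree in the hypothesis $d < |\F| + 4$ versus the weaker $d \le |\F| + 2$ that the purely structural argument of the previous paragraph would give, and the whole claim hinges on executing this detour cleanly.
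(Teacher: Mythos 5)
Your structural observation for $e \le |\F|$ is exactly right, and matches the paper's: since the appended polynomials have degree $\le e-1 \le d-3$, and a nonclassical polynomial of depth $k \ge 1$ must have degree $\ge 1+k(q-1) \ge q$ by \cref{struct}, any polynomial of degree $< q$ is automatically classical. Your identification of $e = q+1$ (equivalently $d = q+3$) as the sole boundary case is also correct. (One small bookkeeping slip: the gap between the automatic range $d \le |\F|+2$ and the hypothesis $d < |\F|+4$ is one unit, not two.)

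The boundary case, however, is where the proposal breaks. You propose to ``convert the small nonclassical rank of $P$ into a bias lower bound $\bias(P)\ge \epsilon'$'' and then apply Kaufman--Lovett (\cref{rankreg}). That implication is simply false: low rank does not imply high bias. The refinement criterion in the regularization procedure is that $\rank_e(P) \le r(|\cB|)$, i.e.\ $P = \Gamma(Q_1,\ldots,Q_s)$ for a few degree-$(e-1)$ polynomials; this by itself gives no lower bound on $\bias(P)$. For instance, if $P = Q_1Q_2$ with $Q_1, Q_2$ generic high-rank polynomials of degree about $e/2$, then $\rank_e(P) \le 2$ yet $\bias(P)$ is negligible. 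Consequently the Kaufman--Lovett decomposition $P = \Gamma'(D_{y_1}P,\ldots,D_{y_{r'}}P)$ has no valid trigger, and your handling of $e = q+1$ does not go through.

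The paper's treatment of the boundary case avoids bias altogether. It allows the definition of rank to hand back degree-$q$ depth-$1$ nonclassical polynomials $Q_j$, and then invokes the ``unnecessary depths'' theorem (\cref{lem:unnecessarydepths}, from \cite{MR3284051}): every nonclassical polynomial of degree $1+k(q-1)$ and depth $k$ is a function of three degree-$\le 1+k(q-1)$ polynomials of depth $\le k-1$. Applied with $k=1$, each offending $Q_j$ of degree $q$ and depth $1$ is rewritten as a function of three classical polynomials of degree $\le q$, which are appended instead. This is the step your argument is missing; with it, every stage of the refinement introduces only classical polynomials, and the same Ackermann-type potential argument terminates as you describe.
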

\begin{remark}
Note that the above claim does not hold for general degrees, as we require the obtained factor be high-rank as defined in \cref{def:rankpoly}, which is complexity against nonclassical polynomials. To see this, we observe that in the case of quartic polynomials, the single polynomial $\{S_4\}$ cannot be refined to a high-rank polynomial factor defined by $O(1)$ {\em classical} polynomials. However, it can be refined to a high-rank nonclassical factor by \cref{factorreg}. This is the barrier to extending our results to sextic and higher-degree polynomials. Starting with a biased sextic polynomial, dealing with non-classical polynomials seems to be unavoidable.
\end{remark}
We postpone the proof of \cref{generaldfactorreg} and show how it can be used to conclude \cref{main}. Fix $r_1:\N\to \N$ a nondecreasing function as in \cref{lem:degreepreserve} for degree $d-2$. Let $\cB$ be the factor defined by degree $\leq d-2$ classical polynomials $\{\iG_1,\ldots, \iG_{c_0}\}_{i=1}^{r_0}$.  Applying \cref{generaldfactorreg} to $\cB$ with regularity parameter $r_1$, we obtain a refinement $\cB' \succeq_{sem} \cB$, where $\cB'$ is defined by $c_2:= C_{\ref{generaldfactorreg}}^{(\F,r_1)}(c_0r_0)$ classical degree $\leq d-2$ polynomials $R_1,\ldots, R_{c_2}:\F^n\rightarrow \F$. Namely, there exists a function $\cK:\F^{c_2}\to \F$, such that
$$
\tf= \cK(R_1,\ldots, R_{c_2}).
$$
Applying an affine transformation, assume without loss of generality that $W=\set{x\in \F^n\vert x_1=x_2=\cdots=x_{t}=0}$. Moreover, we may assume that $n-t>c_2$, since otherwise, $\tf$ has at most $d(n-t)^d= O(c_2^d)$ monomials, making the theorem statement trivial. For every $i\in [c_2]$, let $d_i:= \deg(R_i)$, $s_i:=\sum_{j=1}^i d_i$, and define $R'_i:= x_{s_{i-1}+1}\cdots x_{s_i}$. We have that $\deg(R'_i)=\deg(R_i)$ and thus by \cref{lem:degreepreserve}, 
$$
\deg(\cK(R'_1,\ldots, R'_{c_2})) \leq \deg(\cK(R_1,\ldots, R_{c_2}))= \deg(\tf) = d.
$$
Note that $\cK:\F^{c_2}\to \F$ is a polynomial, and $R'_1,...,R'_{c_2}$ are monomials on disjoint variables, thus plugging in $R'_i$s into $\cK$'s variables, no cancelations can occur. In particular, 
$$
\cK(y_1,\ldots,y_{c_2})= \sum_{s\in \{0,...,q-1\}^{c_2}, \sum_{i} s_i d_i\leq d} \alpha_s \prod_{i\in S} y_i^{s_i},
$$ 
where $\alpha_S\in \F$ are coefficients of $\cK$. Hence,
\begin{equation}\label{eq:strongform}
\tf=\cK(R_1,\ldots, R_{c_2})= \sum_{s\in \{0,...,q-1\}^{c_2}, \sum_{i} s_i d_i\leq d} \alpha_s \prod_{i\in S} R_i^{s_i}.
\end{equation}
Namely, $\nrank(\tf)\leq dc_2^d$, and by \cref{lem:subspace} we deduce $\nrank(f)\leq dc_2^d+t$ as desired.
\end{proof}

\begin{proofof}{of \cref{generaldfactorreg}}
We observe that the iterative proof of \cref{factorreg} can be modified to include only classical polynomials. \cref{factorreg} is proved by a transfinite induction on the vector of number of (possibly nonclassical) polynomials of each degree and depth defining the polynomial factor. One then argues that a polynomial factor that is not of the desired rank, can always be refined to a polynomial factor where some polynomial is replaced by a collection of polynomials that are of either lower degree, or same degree with lower depth. 

We observe that if we start with a polynomial factor defined by degree $\leq d-2$ classical polynomials, the only nonclassical polynomials that may arise are of degree $d-3\leq |\F|$ and thus of depth $1$, this is due to the fact that any nonclassical polynomial of depth $\geq 2$ has degree $\geq 2|\F|-1$. Now we use a known fact that polynomials of degree $|\F|$ that are not classical are unncessary in higher order Fourier analysis. More precisely in~\cref{inverse}, for the case of degree $|\F|$ polynomials, one can assume that the polynomial $P:\F^n \to \T$ in the statement of the theorem is a \emph{classical} polynomial of degree at most $\leq |\F|$.  More generally \cite{MR3284051} showed a similar fact for higher depths.
\begin{theorem}[Unnecessary depths~\cite{MR3284051}]\label{lem:unnecessarydepths}
Let $k\geq 1$, and $q$ the characteristic of $\F$. Every nonclassical polynomial $f:\F^n\rightarrow \T$ of degree $1+k(q-1)$ and depth $k$, can be expressed as a function of three degree $\leq 1+k(q-1)$ polynomials of depth $\leq k-1$.
\end{theorem}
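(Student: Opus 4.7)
My plan is to invoke the structure lemma \cref{struct} to isolate the depth-$k$ contribution of $f$ and replace it, up to a depth-$<k$ error, by the composition of a single classical linear form with a fixed function $\F\to\T$. Since $\deg(f) = 1+k(q-1)$, every depth-$k$ monomial $\frac{c|x_1|^{d_1}\cdots|x_n|^{d_n}}{q^{k+1}}$ in the \cref{struct} expansion of $f$ must satisfy $0 < \sum_i d_i \leq \deg(f) - k(q-1) = 1$, so the depth-$k$ part of $f$ is linear in the $|x_i|$. Thus I may write
$$f = g + \sum_{i=1}^n \frac{c_i|x_i|}{q^{k+1}} \mod 1,\qquad c_i \in \{0,1,\dots,q-1\},$$
where $g$ collects all monomials of lower depth and hence has depth $\leq k-1$ and degree $\leq 1+k(q-1)$.

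Next, I introduce the classical linear polynomial $L(x) := \sum_i c_i x_i \in \F$ (of degree $\leq 1$) and the function $\phi : \F \to \T$, $\phi(a) := |a|/q^{k+1}$. Then $\phi(L) = |L|/q^{k+1}$ is a nonclassical polynomial of degree $1+k(q-1)$ and depth $k$ (by \cref{struct} applied to the classical polynomial $|L|$ of degree $\leq 1$), but it depends on $x$ only through the single depth-$0$ polynomial $L$. I define the residual $H := f - g - \phi(L) \mod 1$ and claim the triple $(g, L, H)$ witnesses the theorem: the identity $f = g + \phi(L) + H$ expresses $f$ as a function of three polynomials, and the degree bound $\deg(H) \leq 1+k(q-1)$ is immediate since each summand in the definition of $H$ satisfies it.

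The main (and essentially only) obstacle is verifying that $H$ has depth $\leq k-1$, i.e., that $H$ takes values in $\U_k = \frac{1}{q^k}\Z/\Z$. This rests on the arithmetic identity $\sum_i c_i|x_i| \equiv |L(x)| \pmod q$ as integers (since both reduce to $L(x)$ in $\F$), which gives
$$\sum_i \frac{c_i|x_i|}{q^{k+1}} - \frac{|L|}{q^{k+1}} = \frac{\sum_i c_i|x_i|- |L(x)|}{q^{k+1}} \in \frac{1}{q^k}\Z/\Z = \U_k.$$
So the depth-$k$ contributions of $f$ and $\phi(L)$ cancel modulo $\U_k$, and since $g$ already lies in $\U_k$, the residual $H$ takes values in $\U_k$ and hence has depth $\leq k-1$. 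This completes the proof with $(g, L, H)$ as the desired three polynomials.
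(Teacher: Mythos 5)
The paper cites this result from \cite{MR3284051} without providing a proof, so there is no internal argument to compare against; I will simply assess your reconstruction, which I believe is correct. The key step — that \cref{struct} forces the entire depth-$k$ part of a degree-$(1+k(q-1))$, depth-$k$ polynomial $f$ to be linear in the $|x_i|$ (since the representation constraint becomes $0<\sum_i d_i\leq 1$) — is exactly the right observation, and the subsequent comparison with the single composite $\phi\circ L$, where $\phi(a)=|a|/q^{k+1}$ and $L(x)=\sum_i c_i x_i$, correctly exploits the congruence $\sum_i c_i|x_i|\equiv|L(x)|\pmod{q}$ to conclude that $H:=f-g-\phi\circ L$ takes values in $\U_k$. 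Two small points that your write-up leaves implicit but which are worth flagging: first, $\phi\circ L$ must itself be a nonclassical polynomial of degree $\leq 1+k(q-1)$ for $H$ to be a polynomial at all — this holds because $L$ is additive, so $D_{y_1}\cdots D_{y_m}(\phi\circ L)=(D_{L(y_1)}\cdots D_{L(y_m)}\phi)\circ L$, and $\phi$ has degree $1+k(q-1)$ on $\F$; second, ``takes values in $\U_k$'' does imply ``depth $\leq k-1$'', which follows from the uniqueness clause of \cref{struct} applied to $q^kH$. As a side remark, your argument actually establishes a slightly stronger statement: setting $g':=f-\phi\circ L$, one gets a decomposition $f=g'+\phi(L)$ into a function of only two polynomials, each of degree $\leq 1+k(q-1)$ and depth $\leq k-1$.
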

By the above discussion we may assume that in our application of \cref{factorreg}, $\cB'$ is defined via only classical polynomials.
\end{proofof}


\subsection{Structure of biased polynomials II, when $d<|\F|+4$}\label{section:generald2}
In this section we prove that a biased degree $d$ polynomial is constant on a large subspace.
 
\restate{\cref{generald2} [Biased degree $d$ polynomials II (when $d<|\F|+4$)]}{ 
Suppose $d>0$ and $\F=\F_q$ with $d<q+4$. Let $f:\F^n \rightarrow \F$ be a degree $d$ polynomial with $\bias(f)=\delta$. There exists an affine subspace $V$ of dimension $\Omega_{d,\delta}(n^{1/\lfloor \frac{d-2}{2}\rfloor})$ such that $f\vert_V$ is a constant.
}

In the case of $d=5$ we have $5<2+4\leq |\F|+4$ and $\lfloor(d-2)/2\rfloor=1$, hence we obtain a subspace of dimension $\Omega_{\delta}(n)$ as desired in \cref{main2}.

We will need the following result of Cohen and Tal~\cite{CT15} on the structure of low degree polynomials. 
\begin{theorem}[\cite{CT15}, Theorem 3.5]\label{thm:avishay}
Let $q$ be a prime power. Let $f_1,\ldots, f_\ell:\F_q^n\rightarrow \F_q$ be polynomials of degree $d_1,\ldots, d_\ell$ respectively. Let $k$ be the least integer such that
$$
n\leq k+ \sum_{j=0}^{\ell} (d_i+1)\sum_{j=0}^{d_i-1}(d_i-j)\cdot {k+j-1 \choose j}.
$$
Then, for every $u_0\in \F_q^n$ there exists a subspace $U\subseteq \F_q^n$ of dimension $k$, such that for all $i\in [\ell]$, $f_i$ restricted to $u_0+U$ is a constant function. 

In particular, if $d_1,...,d_\ell\leq d$, then the above holds for $k=\Omega((n/\ell)^{\frac{1}{d-1}})$.
\end{theorem}
\ignore{In \cite{CT15}, the above theorem was used to prove that biased degree three and four polynomials vanish over a large subspace. 
\begin{lemma}[\cite{CT15}]\label{lem:avishay}
Let $f:\F^n\to \F$ be a degree $d\leq 4$ polynomial with $\bias(f)=\delta$. If $d=3$ then there is a subspace $V$ of dimension $n-O(\log(1/\delta)^2)$  such that $f\vert_V$ is a constant. If $d=4$, then there is a subspace $V$ of dimension $\frac{n}{poly(1/\delta)}$ such that $f\vert_V$ is a constant.
\end{lemma}}

\begin{proofof}{of \cref{generald2}}
\ignore{We induct on $d$. The base case of $d\leq 4$ follows from \cref{thm:avishay,lem:avishay}. }Following the proof of \cref{generald}, there exists  an affine subspace $W$ of dimension $n-t$ for $t=poly(\log(\frac{1}{\delta^2}))$, for which \cref{eq:strongform} holds. By \cref{rankreg}, choosing a proper regularity parameter in the application of \cref{generaldfactorreg}, we can further assume that the factor defined by $R_1,...,R_{c_2}$ is $\frac{\delta}{2}q^{-c_2}$-unbiased in the sense of \cref{dfn:uniformfactor}. We may rewrite \cref{eq:strongform} in the form
$$
f\vert_W =  \sum_{i=1}^{C} \alpha_i G_i H_i + M,
$$
where $C\leq c_2^d$, $\alpha_i$ are field elements, $M$ is a degree $\leq d-2$ polynomial, $G_i$s and $H_i$s are nonconstant degree $\leq d-2$ polynomials satisfying $\deg(G_i)+\deg(H_i)\leq d$. Moreover, every $G_i$ and $H_i$ is product of a subset of $\{R_1,...,R_{c_2}\}$. We crucially observe that $M$ can be taken to be of the form 
$$
M = \sigma_0+\sum_{i=1}^{c_2} \sigma_i R_i,
$$
where $\sigma_i$ are field elements, such that $\sigma_i\neq 0$ implies that $R_i$ does not appear in $\sum_{i=1}^C \alpha_i G_i H_i$.  
\begin{claim}
Let $f$, $W$, $R_1,...R_{c_2}$ and $M$ be as above. Then $M$ is a constant.
\end{claim}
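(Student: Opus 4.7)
The plan is to argue by contradiction. Assume toward a contradiction that $M$ is not a constant, i.e.\ the set $S := \{i \in [c_2] : \sigma_i \neq 0\}$ is nonempty. Earlier in the proof of \cref{generald} we already established $\bias(f\vert_W) \geq \delta$, so it suffices to show that the assumption $S \neq \emptyset$ forces $\bias(f\vert_W) \leq \delta/2$.

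First, I would combine the hypothesis that every $G_j H_j$ is a product of a subset of $\{R_1,\ldots,R_{c_2}\}$ with the defining property of $M$ to rewrite $f\vert_W$ as an expression that depends on the $R_i$'s with $i \in S$ only through the linear term $\sum_{i \in S} \sigma_i R_i$. Concretely,
$$
f\vert_W(x) = F_1\bigl(\{R_j(x) : j \notin S\}\bigr) + \sigma_0 + \sum_{i \in S} \sigma_i R_i(x),
$$
for some function $F_1$ whose arguments are only the $R_j$'s with $j \notin S$.

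Second, I would use the $\tfrac{\delta}{2}q^{-c_2}$-unbiasedness of the factor $\{R_1,\ldots,R_{c_2}\}$ together with a standard Fourier inversion argument to show that the joint distribution of $(R_1(x),\ldots,R_{c_2}(x))$, when $x$ is uniform on $W$, lies within $\tfrac{\delta}{2 q^{c_2}}$ in $\ell_\infty$ of the uniform distribution on $\F^{c_2}$. Summing the contributions over the $q^{c_2}$ atoms then gives
$$
\bias(f\vert_W) \leq \Bigl| \E_{a \in \F^{c_2}} e_\F\bigl(F(a)\bigr) \Bigr| + \frac{\delta}{2},
$$
where $F(a) := F_1(a_j : j \notin S) + \sigma_0 + \sum_{i \in S} \sigma_i a_i$.

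Third, under the uniform distribution on $\F^{c_2}$ the expectation factorizes across the coordinates, and each factor $\E_{a_i \in \F} e_\F(\sigma_i a_i)$ with $i \in S$ vanishes because $\sigma_i \neq 0$ in $\F$. Hence the first term on the right-hand side above is zero whenever $S \neq \emptyset$, yielding $\bias(f\vert_W) \leq \delta/2$ and contradicting $\bias(f\vert_W) \geq \delta$; this forces $S = \emptyset$, so $M = \sigma_0$ is a constant. The main subtlety lies in the first step: justifying that one can actually choose the $\sigma_i$ and $F_1$ so that no $R_i$ with $\sigma_i \neq 0$ appears anywhere in $F_1$. This is exactly the ``moreover'' hypothesis on $M$ arranged just before the claim, which in turn follows from the combinatorial structure of the decomposition \cref{eq:strongform}, where each degree-$d$ monomial of $\tf$ in the $R_i$'s either contributes two factors to the product sum or is absorbed into the low-degree remainder.
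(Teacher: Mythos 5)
Your proof is correct. It is essentially the same argument as the paper's, differing only in packaging: the paper Fourier-expands $e_\F(\Lambda)$ directly and applies the triangle inequality to bound $\bias(f\vert_W)$ by $\sum_\gamma |\widehat{\Lambda}(\gamma)|\,\bias(M + \sum_{j\in S}\gamma_j R_j) \leq q^{c_2}\cdot \tfrac{\delta}{2}q^{-c_2}$, whereas you first convert the unbiasedness hypothesis into an $\ell_\infty$ statement (each atom probability is within $\tfrac{\delta}{2}q^{-c_2}$ of $q^{-c_2}$) and then observe that, under the exactly-uniform distribution, the expectation of $e_\F(F)$ factorizes over coordinates and vanishes because of the nonzero $\sigma_i$. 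Both routes rest on the same two ingredients: the unbiasedness of the factor $\{R_1,\ldots,R_{c_2}\}$, and the structural fact that the $R_i$'s appearing in $M$ with nonzero coefficient are disjoint from those appearing in $\sum_i \alpha_i G_i H_i$. Your explicit factorization step is a pleasant addition: it makes transparent \emph{why} the uniform-measure main term dies, where the paper just absorbs it into the same unbiasedness bound.
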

\begin{proof}
Assume for contradiction that $M$ is nonconstant. By the above discussion, letting $$S:=\{j\in [c_2]: R_j \text{ appears in } \sum_i \alpha_i G_i H_i\},$$ we have
$$
f\vert_W = \Lambda(R_j)_{j\in S} + \sum_{i\in [c_2]\backslash S} \sigma_j R_j,
$$
for some function $\Lambda:\F^{|S|}\to \F$. Writing the Fourier expansion of $e_\F(\Lambda)$, we have
$$
e_\F(f\vert_W)= \sum_{\gamma\in \F^{|\S|}} \widehat{\Lambda}(\gamma) e_\F(\sum_{j\in S}\gamma_j R_j+M).
$$
Note that $W$ was chosen such that $\bias(f\vert_W)\geq \delta$. Thus,
\begin{align*}
\bias(f\vert_W)&= \abs{\Ex_{x\in \F^n} e_\F(\Lambda(R_j)_{j\in S} + M)} \\ &=
\abs{\E_{x\in \F^n} \sum_{\gamma\in \F^{|S|}} \widehat{\Lambda}(\gamma) e_\F(M+\sum_{j\in S} \gamma_j R_j)}\\ &\leq
\sum_{\gamma\in \F^{|S|}} \widehat{\Lambda}(\gamma) \bias(M+\sum_{j\in S} \gamma_j R_j)\\ &\leq 
q^{c_2} \cdot \frac{\delta}{2}q^{-c_2}<\delta,
\end{align*}
contradicting $\bias(f\vert_W)=\delta$, where the last inequality uses the fact that the factor defined by $R_1,...,R_{c_2}$ is $\frac{\delta}{2}q^{-c_2}$-unbiased.
\end{proof}
By the above claim $M$ is a constant, and thus
$$
f\vert_W= \sigma_0+ \sum_{i=1}^C \alpha_i G_iH_i.
$$
Recall that $\deg(G_i)+\deg(H_i)\leq d$, hence for every $i$,  $\min\{\deg(G_i),\deg(H_i)\}\leq \lfloor\frac{d}{2}\rfloor$. Thus by \cref{thm:avishay}, there is an $\Omega_{C}((n-t)^{1/\lfloor \frac{d-2}{2} \rfloor})=\Omega_{\delta, \F,d}(n^{1/\lfloor \frac{d-2}{2} \rfloor})$ dimensional affine subspace $W'$ such that $f\vert_{W'}$ is constant. 
\end{proofof}

\section{Algorithmic Aspects}\label{section:algorithmic}
In this section we show that the strong structures implied by \cref{main} and \cref{generald} can be found by a deterministic algorithm that runs in time polynomial in $n$.

\restate{\cref{algorithmic}}{
Suppose $\delta>0$, $d>0$ are given, and let $\F=\F_q$ be a prime field satisfying $d<q+4$. There is a deterministic algorithm that runs in time $O(n^{O(d)})$ and given as input a degree $d$ polynomial $f:\F^n\to \F$ satisfying $\bias(f)=\delta$, outputs a number $c\leq c(\delta, |\F|,d)$, a collection of degree $\leq d-1$ polynomials $G_1,...,G_c, H_1,...,H_c:\F^n\to \F$ and a polynomial $Q:\F^n\to \F$, such that
\begin{itemize}
\item $f= \sum_{i=1}^c G_iH_i +Q$.
\item For every $i\in [c]$, $\deg(G_i)+\deg(H_i)\leq d$.
\item $\deg(Q)\leq d-1$.
\end{itemize}}
\begin{proof}
We will use the following result of Bhattacharyya, et. al.~\cite{BHT15} who proved several algorithmic regularity lemmas for polynomials.
\begin{theorem}[\cite{BHT15}, Theorem 1.6]\label{algorithmicdecomposition}
For every finite field $\cF$ of fixed prime order, positive integers $d,k$, every vector of positive integers $\Delta=(\Delta_1,...,\Delta_k)$, and every function $\Gamma:\F^k\rightarrow \F$, there is a deterministic algorithm that takes as input a polynomial $f:\F^n\rightarrow \F$ of degree $d$, runs in time polynomial in $n$, and outputs polynomials $Q_1,...,Q_k$ of degrees respectively at most $\Delta_1,..., \Delta_k$ such that
$$
f= \Gamma(Q_1,...,Q_k),
$$
if such a decomposition exists, while otherwise accurately returning \em{\large NO}.
\end{theorem}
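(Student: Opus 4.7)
The plan is to reduce the problem to algorithmically building a regular polynomial factor $\cB = \{P_1, \ldots, P_m\}$ (in the sense of \cref{def:rankfactor}) such that $f$ is a function of $\cB$, and then extracting the $Q_i$'s by a bounded-size brute-force search inside $\cB$. The justification for this reduction is that if a decomposition $f = \Gamma(Q_1,\ldots,Q_k)$ with $\deg(Q_i) \le \Delta_i$ exists, then applying the (nonalgorithmic) regularity lemma \cref{factorreg} to the factor $\{Q_1,\ldots,Q_k\}$ produces a regular refinement $\cB$ of complexity $m = m(\Gamma,\Delta,|\F|)$ independent of $n$, and $f$ remains a function of $\cB$. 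Thus the algorithmic task is to construct such an $\cB$, after which the $Q_i$'s can be recovered.

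First, I would build $\cB$ iteratively. Start with $\cB_0 = \emptyset$. At iteration $i$, check whether $f$ is constant on every atom of $\cB_i$; since $\|\cB_i\|$ is a constant at each stage, this reduces to evaluating $f$ on one representative per atom in time $O(n^d)$. If $f$ is already a function of $\cB_i$, stop. Otherwise, conditioned on some atom, $f$ still has substantial $U^{d+1}$ Gowers norm, so by an algorithmic version of the inverse theorem for Gowers norms (the main engine of \cite{BHT15}) one finds in polynomial time a new polynomial $P^*$ of degree at most $d-1$ correlating with $f$ on that atom. Add $P^*$ to the factor and apply the algorithmic regularity lemma of \cite{BHT15} to restore high rank, obtaining $\cB_{i+1}$. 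By the reduction above, the process must terminate in at most $O_{\Gamma,\Delta,|\F|}(1)$ rounds, since otherwise the bounded-complexity regular factor guaranteed by \cref{factorreg} would be contradicted.

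Once $f = F(P_1,\ldots,P_m)$ is represented over the final regular factor $\cB$, the function $F:\F^m \to \F$ is computable directly by evaluating $f$ on one sample from each atom. To extract the $Q_i$'s, I would enumerate all $k$-tuples of functions $(F_1,\ldots,F_k)$ with $F_j:\F^m \to \F$---a constant-size search space, since $m$, $k$, and $|\F|$ are all constants---set $Q_j := F_j(P_1,\ldots,P_m)$, and for each candidate verify that (a) $\deg(Q_j) \leq \Delta_j$ (computable directly in the monomial basis in $O(n^d)$ time, using \cref{lem:degreepreserve} to certify that degrees within the regular factor are not underestimated), and (b) $\Gamma(Q_1,\ldots,Q_k) = f$ as formal polynomials (coefficient comparison, also $O(n^d)$ time). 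Output the first successful tuple, or NO if the search fails.

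The main obstacle is the algorithmic inverse theorem for Gowers norms invoked in each iteration: given $f$ with $\|f\|_{U^{d+1}} \geq \epsilon$, deterministically find in $\poly(n)$ time a polynomial $P$ of degree at most $d$ whose exponential $e(P)$ correlates with $f$. In the high-characteristic regime $d < |\F|$ this can be set up via algorithmic local-decoding style arguments for Reed--Muller codes, but in low characteristic it requires a delicate algorithmic analogue of the Tao--Ziegler inverse theorem that handles nonclassical polynomials. A secondary subtlety is making sure the algorithmic regularity lemma cooperates with the types (classical vs.\ nonclassical, depth, degree) of the polynomials introduced at each step, so that the eventual factor $\cB$ is in fact capable of hosting a representation of the target $Q_i$'s of degree at most $\Delta_i$.
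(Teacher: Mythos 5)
You are proposing a proof of a theorem that this paper only cites ([BHT15, Theorem 1.6]) and never proves, so the comparison is against the approach of that reference. Your argument has a genuine gap at its core: the engine you invoke in every iteration is a deterministic polynomial-time \emph{algorithmic inverse theorem for the Gowers $U^{d+1}$ norm}, which is not a known result (in low characteristic it would have to handle nonclassical polynomials, and even in high characteristic no such deterministic algorithm is available); you flag it yourself as ``the main obstacle,'' which means the proof rests on an unavailable tool. This is also not how [BHT15] proceeds: their algorithmic regularity machinery is built on an algorithmic version of the bias-versus-rank theorem (\cref{rankreg}), exploiting the fact that a biased polynomial is a function of boundedly many of its \emph{own additive derivatives} $D_yf$, which are explicitly computable polynomials, so no inverse theorem for general bounded functions is ever needed. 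Moreover the specific loop you describe does not drive progress: for a degree-$d$ polynomial $f$ one has $\|e_\F(f)\|_{U^{d+1}}=1$ identically, so ``$f$ still has substantial $U^{d+1}$ norm on an atom'' carries no information, and checking that $f$ is constant on an atom cannot be done by evaluating $f$ at one representative per atom, nor is it clear how you deterministically produce such representatives.

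There is a second gap even if one grants a regular factor $\cB=\{P_1,\ldots,P_m\}$ with $f$ measurable with respect to $\cB$: your final brute-force search only ranges over $Q_j$ of the form $F_j(P_1,\ldots,P_m)$, i.e.\ over decompositions measurable with respect to the factor your algorithm happened to build from $f$. The hypothesis is merely that \emph{some} decomposition $f=\Gamma(Q_1,\ldots,Q_k)$ with $\deg(Q_j)\leq\Delta_j$ exists, and those $Q_j$ need not be functions of $\cB$; as written, your algorithm could return NO on a yes-instance, violating the ``accurately returning NO'' clause. Closing exactly this gap is the substantive content of the cited theorem: one jointly regularizes the hypothetical $Q_j$'s together with the computed factor and uses the degree/type-preservation property of high-rank factors (\cref{lem:degreepreserve}, via the same replace-by-disjoint-monomials device used in the proof of \cref{generald}) to show that the existence of a decomposition of the prescribed shape is determined by the representation of $f$ over a regular factor of bounded complexity, which is what legitimizes a bounded search. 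Without that argument, the reduction stated in your first paragraph establishes only the easy direction.
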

By \cref{generald}, we know that there is $c\leq C(\delta, |\F|, d)$ such that there exist a collection of nonconstant polynomials $G_1,...,G_c, H_1,...,H_c:\F^n\to \F$, and a polynomial $Q:\F^n\to \F$, such that
\begin{equation}\label{alg:strongstructure}
f= \sum_{i=1}^c G_iH_i +Q,
\end{equation}
for every $i\in [c]$, $\deg(G_i)+\deg(H_i)\leq d$, and $\deg(Q)\leq d-1$.
The algorithm is now straight-forward.

\begin{itemize}
\item[{\bf 1}] Iterate through all choices for $c\leq C(\delta, |\F|,d)$. This is our guess for the number of terms in the summation in \cref{alg:strongstructure}.
\begin{itemize}
\item[{\bf 1.1}] Iterate through all choices of $d_1,\ldots, d_c, d'_1,\ldots, d'_c\leq d-1$ and $d''\leq d-1$ such that $d_i+d'_i\leq d$. These are our guesses for degree sequences for $G_1,...,G_c,H_1,...,H_c$ and $Q$. Note that this step does not depend on $n$. 
\begin{itemize}
\item[{\bf 1.1.1}] Define $\Gamma:\F^{2c+1} \rightarrow \F$ as 
$$
\Gamma(x_1,\ldots, x_c, y_1, \ldots, y_c, z):= \sum_{i=1}^c x_iy_i + z.
$$
\item[{\bf 1.1.2}] Run \cref{algorithmicdecomposition} on the polynomial $f$, with $\Delta=(d_1,\ldots, d_c, d'_1,\ldots, d'_c, d'')$ and $\Gamma$ as inputs.
\begin{itemize}
\item[{\bf 1.1.2.a}] If the algorithm outputs { \large NO}, then continue.
\item[{\bf 1.1.2.b}] If the algorithm outputs a collection of polynomials satisfying the decomposition, halt and output the desired decomposition.
\end{itemize}
\end{itemize}
\end{itemize}
\end{itemize}
By \cref{generald} and \cref{algorithmicdecomposition} the above algorithm will always halt with a decomposition of desired form. The number of possible choices in {\bf 1} and {\bf 1.1} do not depend on $n$, and step {\bf 1.1.2} runs in polynomial time in $n$, as a result making the algorithm polynomial time in $n$.
\end{proof}

\section{Conclusions}\label{section:conclusions}
Green and Tao~\cite{MR2592422} and Kaufman and Lovett~\cite{KL08} proved that every degree $d$ polynomial $f$ with $\bias(f)=\delta$ can be written in the form 
\begin{equation}\label{conclusions}
f=\Gamma(P_1,..., P_c),
\end{equation}
for $c\leq c(\delta, d,\F)$ and degree $\leq d-1$ polynomials $P_1,...,P_c$. However, nothing is known on the structure of the function $\Gamma$ in \cref{conclusions}. In this work we showed that in the case of degree five polynomials we can say much more about the structure of $f$. More generally for degree $d$ polynomials when $d<|\F|+4$, we can write 
$$
f=\sum_{i=1}^{C} G_i H_i+Q,
$$ 
for nontrivial polynomials $G_i, H_i$ satisfying $\deg(G_i)+\deg(H_i)\leq d$, and $\deg(Q)\leq d-1$. It is a fascinating question whether similar structure theorems hold in the case of $d\geq |\F|+4$, more specifically we suspect that answering this question for degree $6$ polynomials and $\F=\F_2$ will suffice resolve the question for all degrees and characteristics.

\begin{openproblem}
Can every biased degree six polynomial $f:\F_2^n\to \F_2$ be written in the form 
$$
f=\sum_{i=1}^{C} G_i H_i+Q,
$$ 
for $C\leq C(\bias(f))$, nontrivial polynomials $G_i, H_i$ satisfying $\deg(G_i)+\deg(H_i)\leq 6$, and $\deg(Q)\leq 5$?
\end{openproblem}
A somewhat weaker question that also remains open is whether we can bound the degree of $\Gamma$ in \cref{conclusions} in terms of $d$ only.
\begin{openproblem}
Suppose that $\F=\F_q$ for a prime $q$. Can every degree $d$ polynomial $f:\F^n\to \F$ be written in the form
$$
f=\Gamma(P_1,...,P_{C_1}),
$$
where $C\leq C(\bias(f), \F,d)$, $P_1,...,P_C$ are degree $\leq d-1$ polynomials, and 
$
\deg(\Gamma)\leq O_d(1)?
$
\end{openproblem}

Finally, we note that the constants obtained in \cref{main,main2,generald,generald2}, unlike \cref{HS:cubic} and \cref{HS:quartic}, have very bad dependence on $\delta$ and $d$. In particular, in the case of degree five polynomials, an interesting problem that remains unaddressed is to find out what the optimum constant achievable in \cref{main} is.

\bibliographystyle{amsalpha}
\bibliography{bibs}
\end{document}